\newtheorem{theorem}{Theorem}[section]
\def\V{\mathbb{V}}
\def\Vc{\bar{\mathbb{V}}}
\def\Vd{\mathbb{V}}
\def\Vl{\mathbb{V}_\omega}
\def\Vbddc{\tilde{\mathbb{V}}}
\def\Eq#1{(\ref{eq:#1})}
\def\TBD#1{{\color{red} [*#1*]}}
\def\M{{\mathcal{M}}}
\def\A{{\mathcal{A}}}
\def\sbx{\omega}
\def\Vlx{{{V}_{\sbx}}}
\def\Vx{{\V}}
\def\Al{{\A_\sbx}}
\def\vlx{{v_\sbx}}
\def\Ad{{{\mathcal{A}}}}
\def\Ac{{\bar{\mathcal{A}}}}
\def\Q{{\mathcal{Q}}}
\def\E{\mathcal{E}}
\def\Wx{{\mathcal{W}}}
\def\x{\boldsymbol{x}}
\begin{document}

\title[Solvers for unfitted finite element methods]{Robust and scalable domain decomposition solvers for unfitted finite element methods}

\thanks{This work has partially been funded by the European Research Council under the Proof of Concept Grant No. 737439 NuWaSim - On a Nuclear Waste Deep Repository Simulator and the EC - International Cooperation in Aeronautics with China (Horizon 2020) under the project: Efficient   Manufacturing for Aerospace Components USing Additive Manufacturing, Net Shape HIP and Investment Casting (EMUSIC). SB gratefully acknowledges the support received from the Catalan Government through the ICREA Acad\`emia Research Program.}

\author[S. Badia]{ Santiago Badia$^\dag$}

\thanks{$\dag$ Universitat Polit\`ecnica de Catalunya, Jordi Girona 1-3, Edifici C1, E-08034 Barcelona $\&$ Centre Internacional de M\`etodes Num\`erics en Enginyeria, Parc Mediterrani de la Tecnologia, Esteve Terrades 5, E-08860 Castelldefels, Spain E-mail: {\tt sbadia@cimne.upc.edu}.  }

\author[Francesc Verdugo]{Francesc Verdugo$^\ddag$}
\thanks{$\ddag$ Universitat Polit\`ecnica de Catalunya, Jordi Girona 1-3, Edifici C1, E-08034 Barcelona $\&$ Centre Internacional de M\`etodes Num\`erics en Enginyeria, Parc Mediterrani de la Tecnologia, Esteve Terrades 5, E-08860 Castelldefels, Spain E-mail: {\tt fverdugo@cimne.upc.edu}.}

\date{\today}

\begin{abstract}
 Unfitted finite element methods, e.g., extended finite element techniques or the so-called finite cell method, have a great potential for large scale simulations, since they avoid the generation of body-fitted meshes and the use of graph partitioning techniques, two main bottlenecks for problems with non-trivial geometries. However, the linear systems that arise from these discretizations can be much more ill-conditioned, due to the so-called small cut cell problem. The state-of-the-art approach is to rely on sparse direct methods, which have quadratic complexity and are thus not well suited for large scale simulations. In order to solve this situation, in this work we investigate the use of domain decomposition preconditioners (balancing domain decomposition by constraints) for unfitted methods. We observe that a straightforward application of these preconditioners to the unfitted case has a very poor behavior. As a result, we propose a {customization} of the classical BDDC methods based on {the stiffness weighting} operator and an improved definition of the coarse degrees of freedom in the definition of the preconditioner. These changes lead to a robust and algorithmically scalable solver able to deal with unfitted grids.   A complete set of complex 3D numerical experiments show the good performance of the proposed preconditioners.
\end{abstract}

\maketitle

%\noindent{\bf 2010 Mathematics Subject Classification:} 35Q30; 65N30; 76N10.

\noindent{\bf Keywords:} Unfitted finite elements, embedded boundary methods, linear solvers, parallel computing, domain decomposition

\tableofcontents

% OUTLINE OF THE ARTICLE

\section{Introduction}

% SANTI / FRANCESC

The use of unfitted finite element methods (FEMs) is an appealing
approach for different reasons.  { They}  are interesting in 
coupled problems that involve interfaces (e.g., fluid-structure
interaction \cite{badia_fluidstructure_2008} or free surface flows), or  situations in which one wants to avoid the generation
of body-fitted meshes. These type of techniques have been named in
different ways. Unfitted FEMs for capturing interfaces are usually
denoted as eXtended FEM (XFEM) \cite{belytschko_arbitrary_2001},
whereas these techniques are usually denoted as embedded (or immersed)
boundary methods, when the motivation is to simulate a problem using a
(usually simple Cartesian) background mesh. Recently, different
realizations of the method have been coined in different ways,
depending on the way the numerical integration is performed, how
Dirichlet boundary conditions are enforced on no-matching surfaces, or
the type of stabilization, if any, being used. To mention some
examples, the finite cell method combines an XFEM-type functional
space, a numerical integration based on { adaptive} cell refinement and full
sub-cell integration, to { perform} integration on cut cells, and a Nitsche
weak imposition of boundary conditions
\cite{parvizian_finite_2007}. CutFEM makes also use of Nitsche's
method, but includes additional ``ghost penalty" stabilization terms
to improve the  the stability of the algorithm
\cite{burman_cutfem:_2015}. The huge success of isogeometrical
analysis methods (spline-based discretization) and the severe
limitations of this approach in complex 3D geometries will probably
increase the interest of  { unfitted} methods in the near future
\cite{kamensky_immersogeometric_2015}.

The main showstopper up to now for the succesful application of
unfitted methods for realistic applications is the linear solver
step. The condition number of the resulting linear system does not
only depend on the characteristic size of the background mesh elements,
but also on the characteristic size of the cut elements; cut elements
can be arbitrarily small and can have arbitrarily high aspect ratios
for small cut cell situations. Enforcing some reasonable thresholds,
one can use robust sparse direct solvers for these problems. However,
sparse direct methods are very expensive, due to their quadratic
complexity, which is especially dramatic at large scales. Scalability
is also hard to get, especially at very large scales. For large scale
applications with body-fitted meshes in CSE, iterative solvers,
usually Krylov solvers combined with preconditioners, is the natural
way to go. Scalable preconditioners at large scales are usually based
on algebraic multigrid (AMG) and domain decomposition solvers
\cite{rudi_extreme-scale_2015,gmeiner_quantitative_2016,badia_multilevel_2016,klawonn_toward_2015}. Unfortunately,
the lack of robust and scalable iterative solvers for unfitted FEM  has limited the applicability of unfitted
methods in real applications. It is well-known that some rudimentary
methods, e.g., Jacobi preconditioners, can solve the issue of the
ill-conditioning that comes from cut elements. However, these
preconditioners are neither scalable nor optimal. Different serial
linear solvers for unfitted FEMs have been recently proposed (see,
e.g.,
\cite{menk_robust_2011,berger-vergiat_inexact_2012,hiriyur_quasi-algebraic_2012,de_prenter_condition_2017}).
The methods in \cite{menk_robust_2011,berger-vergiat_inexact_2012}
consider a segregation of nodes into healthy and ill nodes, and
a domain decomposition solver for the ill nodes. The method
is only applied to 2D problems in \cite{menk_robust_2011} and serial
computations, and the domain decomposition solver is not scalable, due
to the fact that no coarse correction is proposed. The method in
\cite{berger-vergiat_inexact_2012} proposes to use an AMG method for
the healthy nodes. A specific-purpose serial AMG solver is designed in
\cite{hiriyur_quasi-algebraic_2012} and a serial incomplete
factorization solver is considered in
\cite{de_prenter_condition_2017}. 

%Other recent
%% approaches involve tailored incomplete LU factorizations \TBD{}, even
%% though these methods are mainly useful for serial computations, since
%% they do not exhibit large amounts of concurrency. Works on parallel
%% hybrid multigrid additive Schwarz solvers for unfitted FE methods have
%% been proposed in \TBD{}. However, the approach presented therein is
%% limited by the fact that \emph{all} ill-posed DoFs are sent to a
%% coarse problem, which is too large in realistic 3D applications. (The
%% application in mind in \TBD{} is fracture mechanics, under the
%% assumption that the number of fractures is small.)

Motivated by the lack of robust (with respect to the element cuts) and
scalable parallel solvers for unfitted methods, we develop in this
work a domain decomposition solver with these desired properties based
on the balancing domain decomposition by constraints (BDDC)
framework. First, we provide a short description of the mathematical
analysis that proves the scalability of BDDC methods on body-fitted
meshes. Next, we show an example that proves that the use of standard
BDDC methods cannot be robust with respect to the element cuts. In
fact, the method performs poorly when straighforwardly
applied to unfitted meshes. Next, we propose (based on heuristic
arguments and numerical experimentation, but motivated from the
mathematical body of domain decomposition methods) a modified BDDC
method. First, we consider {stiffness weighting operator (already
  proposed in the original BDDC article
  \cite{dohrmann_preconditioner_2003})}, relying on the diagonal of
the (sub-assembled stiffness matrix). The use of this weighting
operators proves to be essential for the robustness of the
solver. Next, we consider enhanced coarse spaces, with a sub-partition
of the edge constraints. Since no mathematical analysis is available
for domain decomposition methods with stiffness weighting, we have
performed a comprehensive set of numerical experiments on 3D
geometries with different levels of complexity. The methods are not
only robust and algorithmically weakly scalable, but what is more
surprising, the number of iterations seems to be independent of the
domain shape too. Furthermore, the cost of the modified weighted
operator is identical to the standard one, and the algorithm has the
same building blocks as the standard BDDC preconditioner. This is
good, since one can use the extremely scalable implementation of these
methods, e.g., in the \texttt{FEMPAR} library
\cite{fempar-manual,fempar} or in the PETSc library \cite{zampini_pcbddc:_2016}. In any case, the cost of the new method
can differ from the standard one in the number of coarse degrees of
freedom (DOFs). We have observed that in practice, the CPU cost of the
modified formulation is very close to the one of the standard
preconditioner. In fact, as we increase the size of the coarse problem
of the modified solver (which is robust for unfitted methods) tends to
the one of the standard preconditioner (which performs very bad for
unfitted meshes). We note that the preconditioner proposed herein
could also be readily applied to XFEM-enriched interface problems and
other discretization techniques (e.g., discontinuous Galerkin
methods).

Let us describe the outline of this work. In Sect. \ref{section2} we
show  { the unfitted FE method considered in this paper, in particular}  our choice to integrate  { in} cut cells and  the surface
intersection { algorithm}. (In any case, the preconditioners proposed later on do
not depend on these choices, and can be used in other situations). We
also state the model problem, its discretization, and the mesh and
subdomain partitions. The standard BDDC preconditioner and its
building blocks for body-fitted meshes are presented in
Sect. \ref{section3}. In Sect. \ref{sec:bddc-unfitted}, we show a breakdown example of the
preconditioner on unfitted meshes, an expensive solution that is
provably robust, and a cheap solution based on the stiffness weighting
and (optionally) two slightly larger coarse spaces. A complete set of { complex 3D}
numerical experiments are included in Sect. \ref{sec:examples} to show
experimentally the good performance of the proposed preconditioner. Finally, some conclusions are drawn and future work is described in Sect. \ref{conclusions}.

\section{Unfitted FE method}\label{section2}

%In this section, we describe the immersed boundary method leading to the system of linear equations to be solved with the proposed domain decomposition methods.
%The method is almost the same as the finite cell method. Similarities: Unfitted Cartesian meshes and Nitsche's Method for imposing Dirichlet boundary conditions. Differences: We consider an integration strategy based on marching cubes and the level set method, where as the Finite Cell Method uses an adaptive structured integration cells and exact geometry representation. We could adaopt also their integration strategy without any change in the solver.

\subsection{Cell partition and subdomain partition}
\def\domart{\Omega_{\rm art}}
\def\domext{\Omega_{\rm ext}}
\def\domin{\Omega_{\rm in}}
\def\domcut{\Omega_{\rm cut}}

\def\pelart{\theta_{\rm art}}
\def\pelext{\theta_{\rm ext}}
\def\pelin{\theta_{\rm in}}
\def\pelcut{\theta_{\rm cut}}

Let $\Omega\subset\mathbb{R}^d$ be an open bounded {polygonal} domain, with
$d\in\{2,3\}$ the number of spatial dimensions. For the sake of
simplicity and without loss of generality, we consider that the domain
boundary is defined as the zero level-set of a given scalar function
$\phi^\mathrm{ls}$, namely
$\partial\Omega\doteq\{x\in\mathbb{R}^d:\phi^\mathrm{ls}(x)=0\}$. We
note that the problem geometry could be described using 3D CAD data
instead of level-set functions, by providing techniques to compute the
intersection between cell edges and surfaces (see, e.g.,
\cite{marco_exact_2015-1}). In any case, the way the geometry is
handled does not affect the domain decomposition solver presented
below. Like in any other immersed boundary method, we build the
computational mesh by introducing an \emph{artificial} domain
$\domart$ such that it has a simple geometry easy to mesh using
Cartesian grids and it includes the \emph{physical} domain $\Omega
\subset\domart$ (see Fig.~\ref{fig:IBM:a}).
\begin{figure}[ht!]
\centering
\begin{subfigure}[b]{0.31\textwidth}
\includegraphics[width=\textwidth]{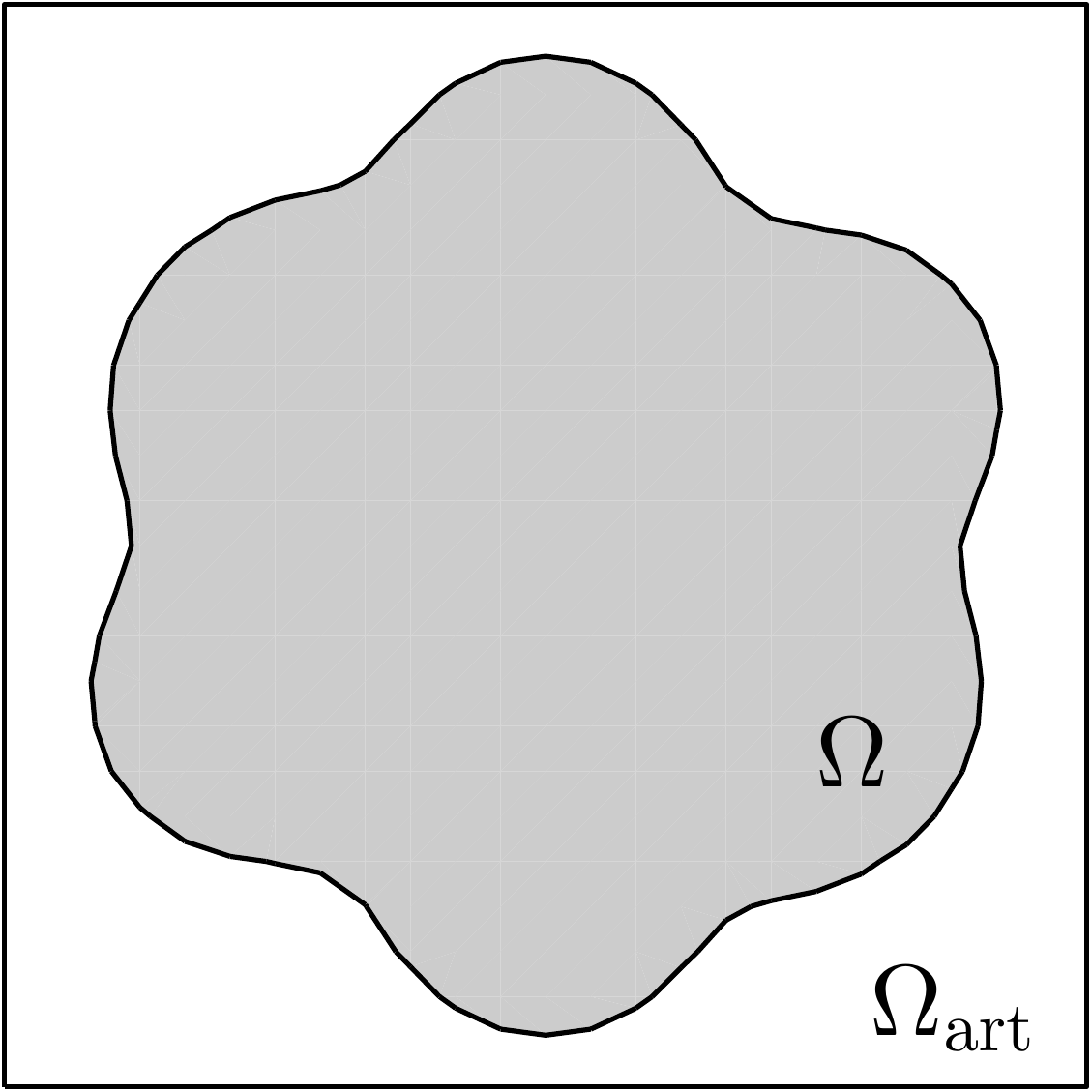}
\caption{Geometry}
\label{fig:IBM:a}
\end{subfigure}
\hspace{0.3cm}
\begin{subfigure}[b]{0.31\textwidth}
\centering

\begin{small}
{\color[RGB]{6,169,193} \rule{8pt}{8pt}} External
{\color[RGB]{124,191,123} \rule{8pt}{8pt}} Internal
{\color[RGB]{240,185,73} \rule{8pt}{8pt}} Cut
\end{small}

\vspace{0.1cm}

\includegraphics[width=\textwidth]{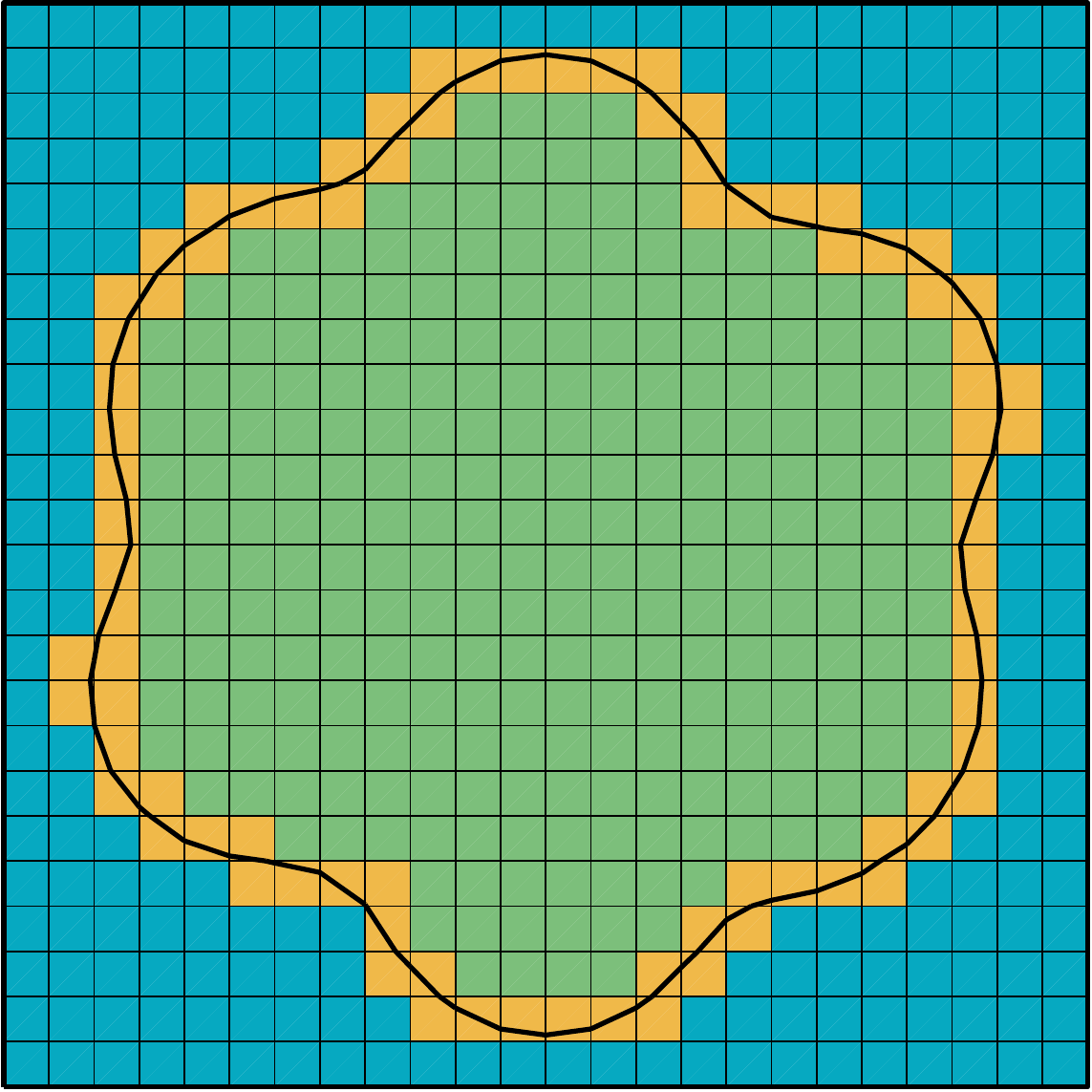}
\caption{Background mesh.}
\label{fig:IBM:b}
\end{subfigure}

\vspace{0.5cm}

\begin{subfigure}[b]{0.31\textwidth}
\includegraphics[width=\textwidth]{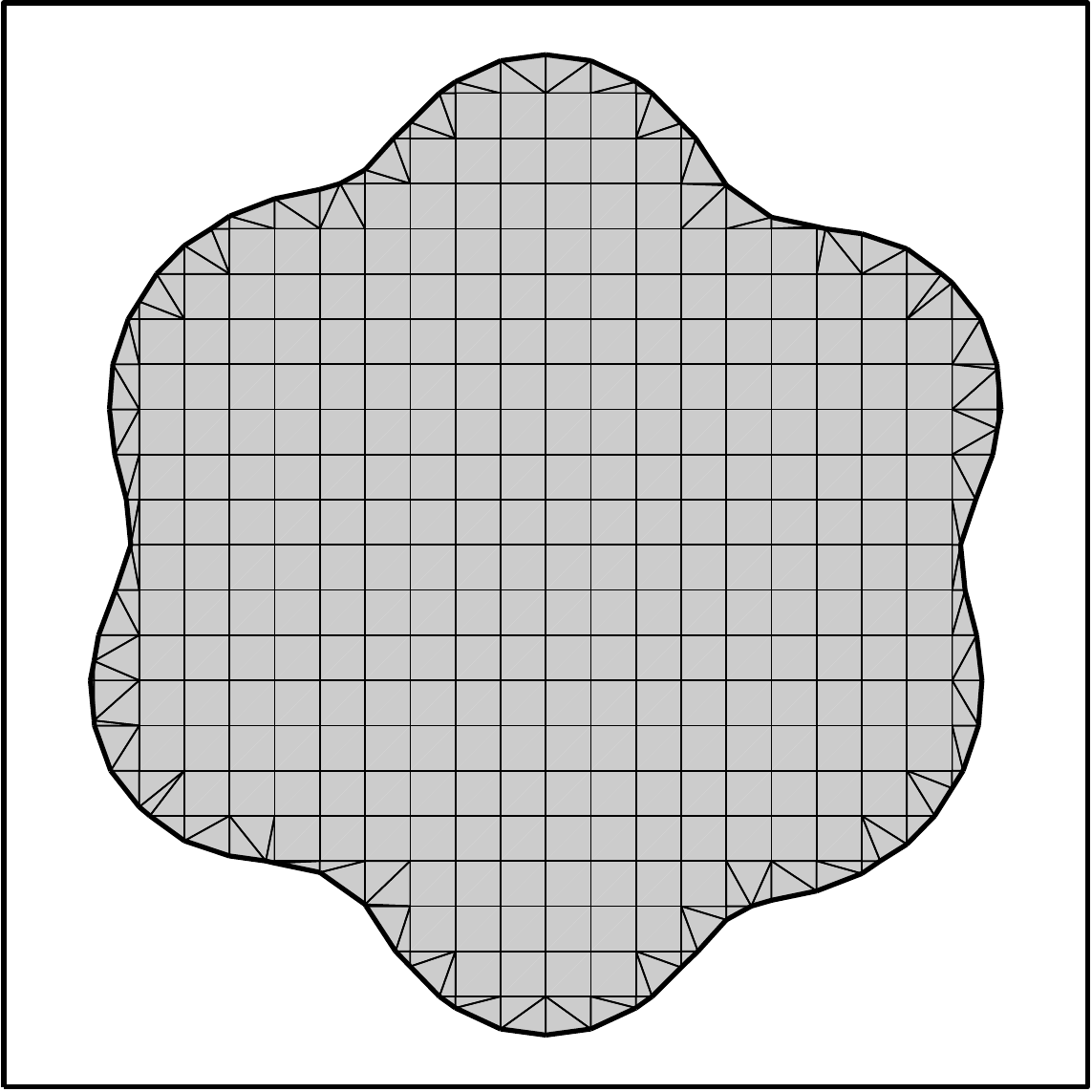}
\caption{Integration cells.}
\label{fig:IBM:c}
\end{subfigure}
\hspace{0.3cm}
\begin{subfigure}[b]{0.31\textwidth}
\includegraphics[width=\textwidth]{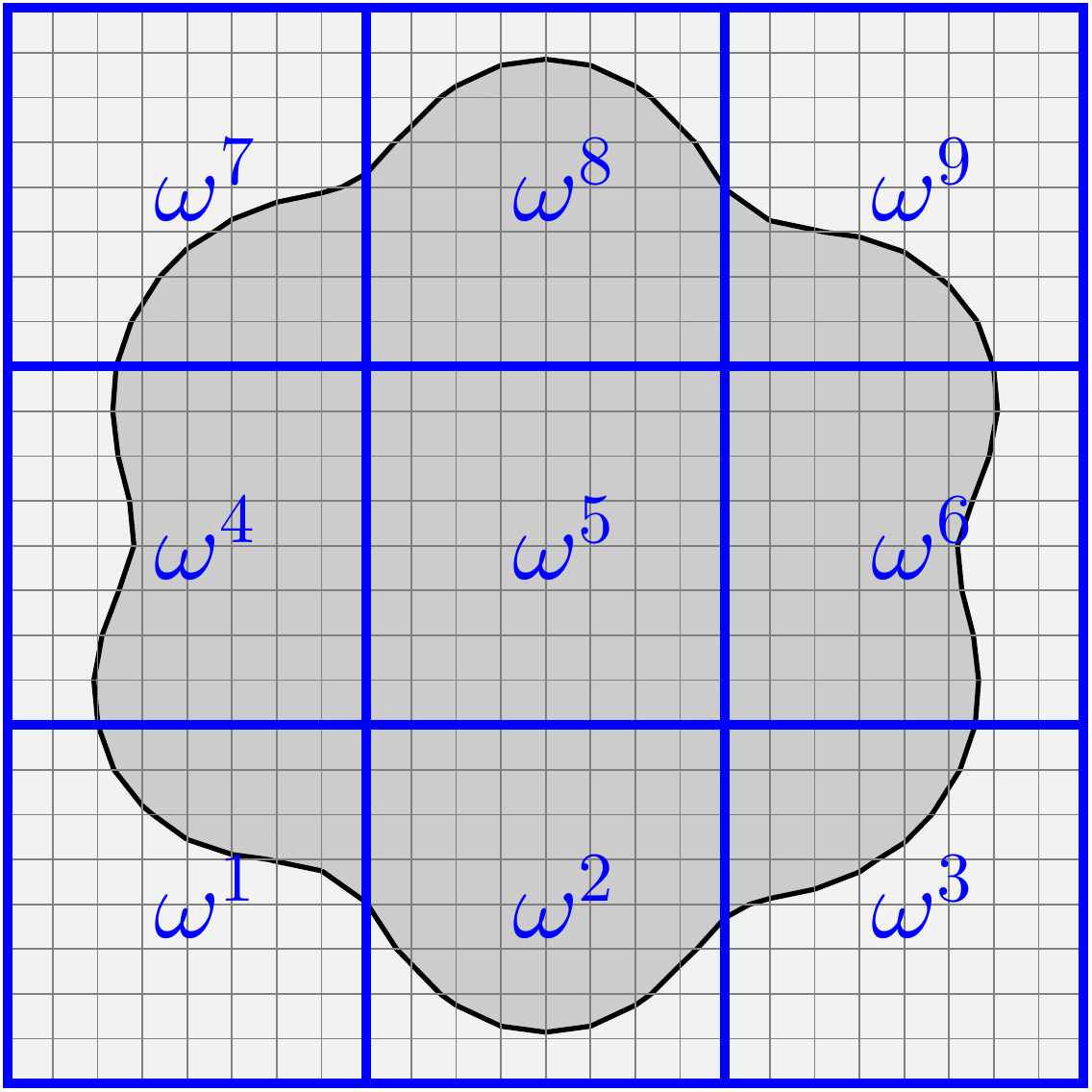}
\caption{Subdomains.}
\label{fig:IBM:d}
\end{subfigure}

\caption{Immersed boundary setup.}
\label{fig:IBM}
\end{figure} 

\def\domart{\Omega_{\rm art}}
\def\domext{\Omega_{\rm ext}}
\def\domin{\Omega_{\rm in}}
\def\domcut{\Omega_{\rm cut}}
\def\domact{\Omega_{\rm act}}

\def\pel{\theta}
\def\pelact{\theta_{\rm act}}
\def\pelart{\theta_{\rm art}}
\def\pelext{\theta_{\rm ext}}
\def\pelin{\theta_{\rm in}}
\def\pelcut{\theta_{\rm cut}}

\def\pelactloc{\theta_{\rm act}^\omega}

\def\psbact{\Theta}%_{\rm act}}

\def\J{\mathcal{J}}

Let us construct a partition of $\domart$ into \emph{cells},
represented by $\pelart$. We are interested in $\pelart$ being a
Cartesian mesh into hexahedra for $d=3$ or quadrilaterals for $d = 2$,
even though unstructured background meshes can also be
considered. Cells in $\pelart$ can be classified as follows: a cell $e
\in \pelart$ such that $e \subset \Omega$ is an \emph{internal cell};
if $e \cap \Omega = \emptyset$, $e$ is an \emph{external cell};
otherwise, $e$ is a \emph{cut cell} (see Fig.~\ref{fig:IBM:b}). The
set of interior (resp. external and cut) cells is represented with
$\pelin$ and its union $\domin \subset \Omega$
(resp. $(\pelext,\domext)$ and $(\pelcut, \domcut))$. Furthermore, we
define the set of \emph{active cells} as $\pelact \doteq \pelin \cup
\pelcut$ and its union $\domact$. Let us also consider a subdomain
partition $\psbact$ of $\domact$, obtained by \emph{aggregation of
  active cells} in $\pelact$, i.e., there is an element partition
$\pelactloc \doteq \{ e \in \pelact : e \subset \omega \}$ for any
$\omega\in\Theta$.  The interface of the subdomain partition is
$\Gamma \doteq \cup_{\omega \in \Theta} \partial \omega \setminus
\partial \Omega$.

To generate the subdomain partition in the context of immersed
boundary methods is simple, since Cartesian meshes are allowed. For
uniform Cartesian meshes, the elements can be easily aggregated into
uniform subdomains as shown in Fig.~\ref{fig:IBM:d}, whereas for
adaptive Cartesian meshes the elements can be efficiently aggregated
using space-filling curves \citep{bader_space-filling_2012}. In both
cases, the partitions can be generated efficiently in parallel without
using graph-based partitioning techniques
\cite{karypis_metis_2011}, which are otherwise required for complex
unstructured body-fitted meshes. In fact, the elimination of both
body-fitted mesh generation and graph partitioning is the main
motivation of this work.

\def\ie{k}
\def\elt{{\delta_\ie}} 
      
\subsection{Model problem and space discretization}\label{sec:modpr} Let us consider as a model problem the Poisson equation with Dirichlet and Neumann boundary conditions: find $u \in H^1(\Omega)$ such that 
\begin{equation}
%\left\lbrace
%\begin{array}{rl}
-\Delta u = f  \quad \text{in } \ \Omega, \qquad 
u=g^\mathrm{D}  \quad\text{on } \ \Gamma_\mathrm{D}, \qquad
\nabla u \cdot n = g^\mathrm{N}  \quad \text{on } \ \Gamma_\mathrm{N},
%\end{array}
%\right.
\label{eq:PoissonEq}
\end{equation}
where $(\Gamma_\mathrm{D},\Gamma_\mathrm{N})$ is a partition of the domain boundary (the Dirichlet and Neumann boundaries, respectively), $f\in H^{-1}(\Omega)$, $g^\mathrm{D}\in H^{1/2}(\Gamma_\mathrm{D})$, and $g^\mathrm{N}\in H^{-1/2}(\Gamma_\mathrm{N})$. 

\newcommand{\gradient}{\boldsymbol{\nabla}}
\newcommand{\normal}{\boldsymbol{n}}

For the space discretization, we consider $H^1$-conforming FE spaces
on conforming meshes.
%(The discontinuous Galerkin (DG) case will not
%be considered in this work, but we could also develop preconditioned
%iterative linear solvers for unfitted DG formulations by combining the
%ideas herein with the BDDC preconditioner for DG formulations in
%\cite{dryja_bddc_2007}.)
 For unfitted grids, it is not possible to
include Dirichlet conditions in the approximation space in a strong
manner. Thus, we consider Nitsche's method
\cite{becker_mesh_2002,nitsche_uber_1971} to impose Dirichlet boundary
conditions weakly on $\Gamma_D$. It provides a consistent numerical
scheme with optimal converge rates (also for high-order elements) that
is commonly used in the immersed boundary community
\cite{schillinger_finite_2014}. We define the space $\Vc \subset
H^1(\Omega)$ as the global FE space related to the mesh
$\pelact$. Further, we define the FE-wise operators:
\begin{align}
  & \A_e(u,v) \doteq  \int_{e \cap \Omega} \gradient u \cdot \gradient v \mathrm{\ d}V + \int_{\Gamma_\mathrm{D} \cap e}\left( \beta u v  - v \left(\normal\cdot \gradient u\right) -  u \left(\normal\cdot \gradient v\right) \right)  \mathrm{\ d}{S},   \\
& \ell_e(v) \doteq {\int_{\Gamma_\mathrm{D} \cap e}\left( \beta  vg^\mathrm{D}  -  \left(\normal\cdot \gradient v\right)g^\mathrm{D} \right)  \mathrm{\ d}S}, & 
\end{align}
    {defined for a generic mesh element $e\in\pelact$}. Vector
    $\normal$ denotes the outwards normal to $\partial\Omega$. The
    bilinear form $\A_e(\cdot,\cdot)$ includes the usual form
    resulting for the integration by parts of \eqref{eq:PoissonEq} and
    the additional term associated with the weak imposition of
    Dirichlet boundary conditions with Nitsche's method. The
    right-hand side operator $\ell_e(\cdot)$ includes additional terms
    related to Nitsche's method. We will make abuse of notation, using
    the same symbol for a bilinear form, e.g., $\A
    : \V \rightarrow \V'$, and its corresponding linear operator, i.e., $\langle \A u , v \rangle \doteq \A( u
    , v)$.

    The coefficient $\beta>0$ is a mesh-dependent parameter that has
    to be large enough to ensure the coercivity of
    $\A_e(\cdot,\cdot)$.
%We compute the minimal values of $\beta$ required to  ensure coercivity with local element-wise eigenvalue problems, as proposed in \cite{de_prenter_condition_2017}. 
{ As shown in \cite{de_prenter_condition_2017}, coercivity is mathematically guaranteed by considering an element-wise constant coefficient $\beta$ such that
\begin{equation}
\beta_e \geq C_e \doteq \sup_{v\in \Vc} \dfrac{ \mathcal{B}_e (v,v)  }{ \mathcal{D}_e(v,v) }
%\label{eq:coer-cond}
\end{equation}
for all the mesh elements $e\in\pelcut$ intersecting the boundary $\Gamma^\mathrm{D}$. In previous formula, $\beta_e$ is the value of $\beta$ restricted to element $e$, whereas  $\mathcal{D}_e(\cdot,\cdot)$ and  $\mathcal{B}_e(\cdot,\cdot)$ are the forms defined as
\begin{equation}
\mathcal{D}_e(u,v)\doteq \int_{e \cap \Omega} \gradient u \cdot \gradient v \mathrm{\ d}V,\quad \text{ and }\quad
\mathcal{B}_e (u,v) \doteq  \int_{\Gamma_\mathrm{D} \cap e} \left(\normal\cdot \gradient u\right) \left(\normal\cdot \gradient v\right) \mathrm{\ d}S.
\end{equation}
Since $\Vc$ is finite dimensional, and $\mathcal{D}_e(\cdot,\cdot)$ and  $\mathcal{B}_e(\cdot,\cdot)$ are symmetric and bilinear forms, the value  $C_e$ (i.e., the minimum admissible coefficient $\beta_e$)
can be computed numerically as $C_e\doteq\tilde\lambda_\mathrm{max}$, being $\tilde\lambda_\mathrm{max}$  the largest eigenvalue of the generalized eigenvalue problem  (see \cite{de_prenter_condition_2017} for details):
\begin{equation}
\mathcal{B}_e x = \tilde\lambda \mathcal{D}_e x.
\label{eq:local-eigenvals}
\end{equation}
In this work, we chose $\beta_e\doteq 2\tilde\lambda_\mathrm{max}$ in all the examples in order to be on the safe side, as suggested in~\cite{de_prenter_condition_2017}. Let us note that both operators have the same kernel, the space of constants. Thus, the generalized eigenvalue problem \eqref{eq:local-eigenvals} is well-posed (see~\cite{de_prenter_condition_2017} for more details). 

The global FE operator $\Ac: \Vc \rightarrow \Vc'$.  and right-hand side term $\ell \in \Vc' $ are stated as the sum of the element contributions, i.e., $$
\Ac(u,v) \doteq \sum_{e \in \pelact} \A_e(u,v),  \quad
\ell{ (v)} \doteq \sum_{e \in \pelact} \ell_e{ (v)},
\qquad  \hbox{for } u, v \in \Vc. 
$$

Further, we define $b: \Vc' \rightarrow \Vc$ as $b(v) \doteq f(v) + g^{\rm N}(v) + \ell(v)$, for $v \in \Vc$.

The global problem can be stated in operator form as: find $u \in \Vc$ such that $\Ac u = b$ in $\Vc'$.  After the definition of the set of DOFs $\bar{\mathcal{N}}$ and the corresponding FE basis (of shape functions) $\{\phi_a(\x)\}_{a \in \bar{\mathcal{N}}}$ that span $\Vc$, the previous problem leads to a linear system to be solved. At large scales, sparse linear systems are usually solved with Krylov iterative methods \cite{saad_iterative_2003} due to the quadratic complexity of sparse direct methods. Here, we can consider a conjugate gradient (CG) iterative method since $\Ac$ is symmetric positive definite. The convergence rate of the conjugate gradient method is very sensitive to the condition number of the operator, namely $k_2(\Ac)\doteq|\Ac|_2/|\Ac^{-1}|_2$, where $|\Ac|_2$ denotes the operator 2-norm. As shown in \cite{de_prenter_condition_2017} (under over-optimistic assumptions), the condition number scales as  
\begin{equation}
k_2(\Ac) \sim  \min_{e \in \pelcut} \eta_e^{-(2p_e+1-2/d)}, \qquad \eta_e \doteq  \frac{| e \cap \Omega|}{ |e|},
\label{eq:finite-cell-estimate}
\end{equation}
where $p_e$ is the polynomial degree of the interpolation at element $e$. Thus, arbitrarily high condition numbers are expected in practice since the position of the interface cannot be controlled.  In turn, the convergence rate of the iterative solver is expected to be very slow and highly sensitive to the position of the geometry unless a robust preconditioner is considered. Our goal is to develop a BDDC preconditioner able to deal with the bad conditioning properties associated with cut elements, be quasi-optimal with respect to $h$ (the characteristic element size of $\pelact$), and be weakly scalable (not dependent on the number of subdomains for a fixed subdomain problem size). Attaining these objectives, we will have at our disposal a solver for embedded boundary systems  that will efficient exploit distributed memory resources for large scale problems.

\section{Domain decomposition for body-fitted FE meshes}\label{section3}

In this section, we briefly introduce the BDDC method as usually considered in the literature for conventional body-fitted meshes \cite{dohrmann_preconditioner_2003}. The description of the algorithm is concise and we mainly focus on the parts that are modified later in Sect.~\ref{sec:bddc-unfitted} to deal with unfitted meshes. We refer the reader to \cite{mandel_convergence_2003,brenner_mathematical_2010} for a detailed exposition of BDDC methods and to~\cite{badia_implementation_2013,badia_multilevel_2016} for the practical implementation details. We have also included a short proof of the condition numbers for body-fitted meshes. It is important for the subsequent discussion to explain the breakdown of the standard algorithm for unifitted meshes and motivate robust modifications. In this section, $\Omega \equiv \Omega_{\rm act}$ and the Dirichlet boundary conditions are strongly enforced in the definition of the FE spaces.
%In the following, define the basic ingredients of the BDDC method (the sub-asse, the weighting operator and the coarse degrees of freedom), and finally we state the preconditioner.

%Since in this section we restrict (for the moment) to body fitted meshes, we assume that the mesh~$\mathcal{T}$ fits to the physical domain $\Omega$, or in other words, no exterior or cut elements are found in $\mathcal{T}$. 

\subsection{Sub-assembled problem}

Non-overlapping domain decomposition preconditioners rely on the definition of a sub-assembled FE problem, in which contributions between subdomains have not been assembled. In order to do so, at every subdomain $\omega \in \Theta$, we consider the FE space $\Vd_\omega$ associated to the element partition $\pelact^\omega$ with homogeneous {Dirichlet boundary conditions on $\partial\omega \cap \partial \Omega_{\rm D}$}. One can define the subdomain operator $\Al(u,v) = \sum_{e \in \pelact^\omega} \A_e(u,v)$, for $u, v \in \Vl$.

Subdomain spaces lead to the sub-assembled space of functions $\V \doteq \Pi_{\omega \in \Theta} \V_\omega$. For any $u \in \Vd$, {we define its restriction to a subdomain $\omega \in \Theta$ as $u_\omega$}. Any function $u \in \Vd$ can be represented by  its unique decomposition into subdomain functions as $\{u_\omega \in \Vlx\}_{\omega \in \Theta}$. We also define the sub-assembled operator $\mathcal{A}(u,v) \doteq \Pi_{\omega \in \Theta} {\mathcal{A}}_\omega (u_\sbx,v_\sbx)$. 
%At this point, we can state the following sub-assembled problem: given $g \in \V_\omega'$, find $u \in \V_\omega$ such that $\Al \ulx = g_\sbx$ for $\sbx \in \Theta$. 
%\begin{align}\label{eq:sub-assembled_space}
%& \Al \ulx = g_\sbx, \quad \sbx \in \Theta.
%\end{align}
%With the previous notation, we can write the sub-assembled problem in a compact manner as $\Ad u = g$. %\P

With these definitions,  $\Vc$ can be understood as the subspace of functions in $\Vd$ that are continuous on the interface $\Gamma$, and $\bar{\mathcal{A}}$ is the Galerkin projection of $\mathcal{A}$ onto $\Vc$. We note that $\pelact$ and the FE type defines $\Vc$, whereas $\Theta$ is also required to define the local  spaces $\{\V_\omega\}_{\omega \in \Theta}$ and the sub-assembled space $\Vd$, respectively. We consider Lagrangian FE spaces, where DOFs are associated to nodes (spatial points). We represent the set of nodes related to the FE space $\V$ with $\mathcal{N}$ (analogusly for $\mathcal{N}_{\omega}$).

\subsection{Coarse DOFs}\label{sec:std-objects}

A key ingredient in domain decomposition preconditioners is to classify the set of nodes of the FE space $\Vc$. The interface $\partial e$ of every FE in the mesh $\pelact$ can be decomposed into vertices, edges, and faces. By a simple classification of these entities, based on the set of subdomains that contain them, one can also split the interface $\Gamma$ into faces, edges, and vertices (at the subdomain level), that will be called geometrical objects. We represent the set of geometrical objects by $\Lambda$. In all cases, edges and faces are open sets in their corresponding dimension. By construction, faces belong to two subdomains and edges belong to more than two subdomains in three-dimensional problems. This classification of $\Omega$ into objects automatically leads to a partition of interface DOFs into DOF objects, due to the fact that every DOF in a FE does belong to only one geometrical entity.  These definitions are heavily used in domain decomposition preconditioners (see, e.g., \cite[p.~88]{toselli_domain_2004}).

Next, we associate to some (or all) of these geometrical objects a \emph{coarse} DOF. In BDDC methods, we usually take as coarse DOFs mean values on a subset of objects $\Lambda_O$. Typical choices of $\Lambda_O$ are $\Lambda_O \doteq \Lambda_{C}$, when only corners are considered, $\Lambda_O \doteq \Lambda_C \cup \Lambda_{E}$, when corners and edges are considered, or $\Lambda_O \doteq \Lambda$, when corners, edges, and faces are considered. These choices lead to three common variants of the BDDC method referred as BDDC(c), BDDC(ce) and BDDC(cef), respectively. This classification of DOFs into objects can be restricted to any subdomain $\omega \in \Theta$, leading to the set of subdomain objects $\Lambda_O(\omega)$.

\def\rest{\mathcal{R}}
With the classification of the interface nodes and the choice of the objects in $\Lambda_O$, we can define the coarse DOFs and the corresponding BDDC space. Given an object $\lambda \in \Lambda_O(\omega)$, let us define its restriction operator $\rest_\lambda^\omega$ on a function $u \in \Vl$ as follows: $\rest_\lambda^\omega(u)(\xi) = u(\xi)$ {for  } a node $\xi \in \mathcal{N}_{\omega}$ that belongs to the geometrical object $\lambda$, and zero otherwise. We define the BDDC space $\Vbddc \subset \Vx$ as the subspace of functions $v \in \Vx$ such that the constraint
\begin{equation}\label{eq:coarse-dofs}
\alpha_\lambda^\sbx(\vlx) \doteq  \int_{\lambda} \rest_\lambda^\sbx(\vlx)  \qquad \hbox{is identical for all } \sbx \in {\rm neigh}(\lambda){,}  
\end{equation}
    {where ${\rm neigh}(\lambda)$ stands for the set of subdomains that contain the object $\lambda$.} (The integral on $\lambda$ is just the value at the vertex, when $\lambda$ is a vertex.) Thus, every $\lambda \in \Lambda_O$ defines a coarse DOF value \Eq{coarse-dofs} that is continuous among subdomains. Further, we can define the BDDC operator $\tilde \Ad$ as the Galerkin projection of $\Ad$ onto $\Vbddc$. %\P
    
\subsection{Transfer operator}\label{sec:topo-weighting}

The next step is to define a transfer operator from the sub-assembled space $\Vd$ to the continuous space $\Vc$. The transfer operator is the composition of a weighting operator and a harmonic extension operator.

\begin{enumerate}

\item The weighting operator $\mathcal{W}$ takes a function $u \in \Vd$ and computes mean values on interface nodes, i.e.,
\begin{equation}\label{eq:weighting}
\mathcal{W}u(\xi) \doteq \frac{\sum_{\omega \in {\rm neigh}(\xi)}u_\omega(\xi)}{|{\rm neigh}(\xi)|}, %\qquad \xi \in \Xi,
\end{equation} at every node $\xi \in \mathcal{N}$ of the FE mesh $\pelact$, where ${\rm neigh}(\xi)$ stands for the set of subdomains that contain the node $\xi$. It leads to a continuous function $\mathcal{W}u \in \Vc$. It is clear that this operator only modifies the DOFs on the interface. We denote this weighting as \emph{topological weighting}, {since it is defined from the topology of the mesh and the subdomain partition}. % Other choices can be defined for non-constant physical coefficients. 

\item Next, let us define the bubble space $\V_0 \doteq \{ v \in \V: v = 0 \hbox{ on } \Gamma \}$ %We proceed analogously to define $\Vc_{\omega,0}$ and $\Vc_0$; we note that $\V_0 = \Vc_0 \subset \Vc$ by definition. 
  and the Galerkin projection $\Ad_0$ of $\Ad$ onto $\Vx_0$. We also define the trivial injection $\mathcal{I}_0$ from $\V_0$ to $\Vc$. The \emph{harmonic} extension reads as $\mathcal{E} v \doteq (I - \mathcal{I}_0 \Ad_0^{-1} \mathcal{I}_0^T \Ac) v$ for a function $v\in\Vc$, where $I$ is the identity operator in $\Vc$. This operator corrects interior DOFs only. The computation of $\Ad_0^{-1}$ involves to solve local problems with homogeneous Dirichlet boundary conditions on $\Gamma$.  %Thus, the operator can be decomposed into subdomain-wise harmonic extension operators $\E_\omega$. 
  The set of discrete global harmonic functions is $\V_0^\perp\doteq\{v\in\V:\ \A(v,w)=0,\ \forall w\in\V_0\}$.   
\end{enumerate}
The transfer operator $\Q : \Vd \rightarrow \Vc$ is defined as $\Q \doteq \E \Wx$. %\P

\subsection{Preconditioner and condition number bounds}

With all these ingredients, we are now in position to define the BDDC preconditioner. This preconditioner is an additive Schwarz preconditioner (see, e.g., \cite[Ch.~2]{toselli_domain_2004}), with corrections in $\V_0$ and the BDDC correction in $\Vbddc$ with the transfer $\Q$. As a result, the BDDC preconditioner reads as:
{
  \begin{equation}\label{eq:bddc}
\M \doteq \mathcal{I}_0 \A_0^{-1} \mathcal{I}_0^T+ \Q \tilde \A^{-1} \Q^T.
  \end{equation}
  }
For body-fitted meshes and second order elliptic problems,  the BDDC preconditioned operator, namely $\M\Ac$, has a condition number bounded as follows (see \cite{mandel_convergence_2003}).

\begin{theorem}
The operator $\M \Ac$, where $\M$ can be the preconditioner BDDC(c) or BDDC(ce) in 2D and BDDC(ce) and BDDC(cef) in 3D, has a condition number bounded by:
\begin{equation}
 k_2(\M\Ac)\leq \tilde C \left(1+\mathrm{log}^2 \left(\dfrac{H}{h}\right)\right),
 \label{eq:cond-estim}
\end{equation}
where $\tilde C$ is a constant independent from the sizes $h$ and $H$ (the characteristic mesh and subdomain size). In the case of BDDC(ce), every face $\lambda_F \in \Lambda_F$ must have an edge $\lambda_E \in \Lambda_E$  on its boundary, i.e., $\lambda_E \subset \partial \lambda_F$.
\end{theorem}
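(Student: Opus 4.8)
The plan is to follow the now-classical abstract Schwarz framework for two-level preconditioners, as laid out, e.g., in \cite{toselli_domain_2004,mandel_convergence_2003}. Since $\M$ in \eqref{eq:bddc} is an additive Schwarz preconditioner built from the bubble correction in $\V_0$ and the BDDC coarse correction in $\Vbddc$ with transfer $\Q$, the condition number estimate reduces to verifying the three standard assumptions of the abstract theory: (i) a stable decomposition bound $\omega_0^{-1}$, (ii) a strengthened Cauchy--Schwarz / coloring constant, and (iii) a local stability constant. For BDDC the second and third constants are $O(1)$ because the splitting is $\A$-orthogonal (bubbles versus harmonic/coarse) and the interface couplings have bounded overlap; the whole difficulty is concentrated in the stable decomposition estimate, and in turn in bounding the transfer operator $\Q = \E\Wx$.

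First I would record the algebraic identity that the preconditioned operator $\M\Ac$ is spectrally equivalent to $\Q \tilde\A^{-1} \Q^T \Ac$ on the $\A$-orthogonal complement of the bubbles, so that $k_2(\M\Ac) \le \sup_{v \in \V_0^\perp} \frac{\A(\Q v, \Q v)}{\A(v,v)}$ together with a trivial lower bound $1$ (which follows because $\Q$ acts as the identity on $\Vc$, i.e., $\Q$ restricted to continuous functions is the identity). Thus the crux is the upper bound
\begin{equation}
\A(\Q v, \Q v) \le \tilde C \left(1 + \log^2(H/h)\right)\, \A(v,v), \qquad v \in \V_0^\perp .
\label{eq:plan-transfer}
\end{equation}
Next I would split $\Q v = \E\Wx v$ and use that the harmonic extension $\E$ is an $\A$-orthogonal projection onto $\V_0^\perp$, hence $\A(\E\Wx v, \E\Wx v) \le \A(\Wx v, \Wx v)$; this removes $\E$ from the estimate. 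It then remains to bound the jump operator $\Wx - I$ (equivalently, since coarse DOFs are continuous across subdomains, the difference $\Wx v - v$ lives in a space of ``boundary corrections'' that vanish at the coarse DOFs) subdomain by subdomain.

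The heart of the proof is the local estimate: for each $\omega$, the quantity $\A_\omega\big((\Wx v - v)_\omega, (\Wx v - v)_\omega\big)$ is controlled by $\big(1 + \log^2(H/h)\big)$ times the sum of $\A_{\omega'}(v_{\omega'}, v_{\omega'})$ over $\omega'$ sharing an object with $\omega$. This is where the discrete Sobolev-type inequalities enter: the standard face lemma, the edge lemma, and the vertex/corner estimates on a shape-regular mesh of size $h$ inside a subdomain of diameter $H$ (see, e.g., \cite[Ch.~4]{toselli_domain_2004}). The logarithmic factor comes precisely from the $H^{1/2}_{00}$ face/edge estimates. The hypothesis on BDDC(ce) — that every face $\lambda_F$ has an edge $\lambda_E \subset \partial\lambda_F$ — is exactly what is needed so that the face contributions, which are not directly constrained by a coarse DOF in the (ce) variant, can be controlled after subtracting their mean: the adjacent edge constraint pins down the mean value, and then the zero-mean part of the face jump is estimated by the face lemma without any extra unbounded constant. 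I would treat the three admissible cases (BDDC(c)/BDDC(ce) in 2D, BDDC(ce)/BDDC(cef) in 3D) by checking that in each the set of primal constraints is rich enough for the corresponding geometric objects to be handled by the available discrete inequalities.

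The main obstacle, and the step I would spend the most care on, is the local jump estimate together with the bookkeeping that globalizes it: one must verify that summing the local bounds over all subdomains only multiplies the constant by the (bounded) number of neighbors, and that the partition-of-unity structure of $\Wx$ (the averaging weights $1/|\mathrm{neigh}(\xi)|$) interacts correctly with the discrete harmonic norms — in particular that the weighted jumps can be localized to individual faces and edges. For body-fitted, shape-regular meshes this is standard; I would simply cite \cite{mandel_convergence_2003} for the full argument and sketch the reduction above, emphasizing the role of the face--edge hypothesis in the (ce) case. The estimate is independent of $H$ and $h$ separately because all local inequalities are scale-invariant once rephrased on the reference subdomain of unit size, leaving only the ratio $H/h$ inside the logarithm.
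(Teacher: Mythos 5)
Your proposal follows the same route as the paper: cast the preconditioner in the abstract additive Schwarz framework so that $\lambda_{\rm min}=1$, reduce the upper bound to estimating the energy of the jump $\Q v - v$ for discrete harmonic $v$, decompose that jump into per-object (face/edge/vertex) contributions, invoke the standard discrete trace/Sobolev lemmas to obtain the $\left(1+\log^2(H/h)\right)$ factor, and use the face--edge hypothesis to control the unconstrained face means in the BDDC(ce) case, citing \cite{mandel_convergence_2003} for the technical lemmas exactly as the paper does. No substantive difference in strategy or in the key estimates.
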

\begin{proof}
\def\E{\mathcal{E}}
\def\W{\mathcal{W}}
The BDDC method can be casted in the abstract additive Schwarz setting (see \cite[Ch.~7]{brenner_mathematical_2010} and \cite[Ch.~2]{toselli_domain_2004}). As a result, the condition number of the preconditioned operator is $k_2(\M\Ac) = \frac{\lambda_{\rm max}}{\lambda_{\rm min}}$, with $\lambda_{\rm min} = 1$ and
\begin{align}
%\lambda_{\rm min} &\doteq \min_{v \in \Vc \setminus \{0\}} \frac{\langle \A v , v \rangle}{ \min_{\{ v_0 \in \V_0, \,\tilde v \in \Vbddc: v_0 + \mathcal{Q} \tilde{v} = v \} } \langle \A v_0, v_0 \rangle + \langle \A \tilde v, \tilde v \rangle}, \\
\lambda_{\rm max} &\doteq \max_{\tilde v \in \Vbddc} \frac{\langle \A \Q \tilde v,\Q \tilde v\rangle}{\langle \A \tilde v, \tilde v \rangle} 
\leq 1 + \max_{\tilde v \in \Vbddc \cap \V_0^\perp} \frac{ \langle \A (\Q \tilde v  - \tilde v),\Q \tilde v - \tilde v \rangle}{\langle \A \tilde v, \tilde v \rangle},
\end{align}
where we have used the triangle inequality and the energy minimization property of discrete harmonic functions in the last bound (see \cite{mandel_convergence_2003}). 
%It is straighforward to check that $\lambda_{\rm min} = 1$, taking $\tilde v = \mathcal{E}v$ and $v_0 = v - \mathcal{E} v$ for any $v \in \Vc$ and noting that $\mathcal{W}v = v$.
In order to bound $\lambda_{\rm max}$, we need some further elaboration. We can decompose any discrete harmonic function $v \in\V_0^\perp$ as the sum of object functions $v = \sum_{\lambda \in \Lambda} \sum_{\omega \in {\rm neigh}(\lambda)} \E \rest_\lambda^\omega(v)$. For body-fitted meshes, it is clear that
\begin{equation}
\rest_{\lambda}^\omega (v) = 0, \quad  \hbox{on} \quad \partial\omega \setminus \lambda,  \quad \forall \lambda \in \Lambda,\quad \forall \omega \in {\rm neigh}(\lambda) .\label{eq:restbou}
\end{equation}
Further, given $v\in \V_0^\perp$, $\lambda \in \Lambda$, and subdomains $\omega, \omega' \in {\rm neigh}(\lambda)$, we define the jump function $\J_{\omega,\omega'}^\lambda(v)\in \V_0^\perp$ that is equal to zero in all subdomains but $\omega$, where it take the value $\J_{\omega,\omega'}^\lambda(v) (\xi) \doteq  v_\omega(\xi) - v_{\omega'}(\xi)$ if $\xi \in \lambda$ and zero otherwise.  Thus, we get
\begin{align}
w  \doteq \Q v - v  = \sum_{\lambda \in \Lambda} \sum_{\omega \in {\rm neigh}(\lambda)} \E \rest_\lambda^\omega(\Q v - v) = \sum_{\lambda \in \Lambda} \sum_{\omega \in {\rm neigh}(\lambda)} \sum_{\omega'{\rm neigh}(\lambda)} \frac{1}{|{\rm neigh}(\lambda)|} \J_{\omega,\omega'}^\lambda (v).\label{eq:weicon}
\end{align}
% Let us consider a constant $\gamma$ such that
% \begin{equation}
% \| \J_{\omega,\omega'}^\lambda (v) \|_\A \leq \gamma \| v \|_\A, \quad  \hbox{for any} \quad v \in \Vbddc, \, \lambda \in \Lambda, \, \omega, \omega' \in {\rm neigh}(\lambda).
% \label{eq:beta}
% \end{equation}
Using standard domain decomposition theory, one can prove the following bound  (see, e.g., \cite{mandel_convergence_2003}):
$$
\| \J_{\omega,\omega'}^\lambda (v) \|_\A \lesssim \left(1 + \mathrm{log}^\rho \left(\dfrac{H}{h}\right) \right) | v |^2_{H^1(\omega)},
$$
where $\rho = 1$ for edges and 2 for faces.  The bound for faces in BDDC(cef) differs from the fact that $\alpha_\lambda^\omega(v) \neq \alpha_\lambda^{\omega'}(v)$ in general. It can be bounded similarly relying on the assumption that there exists an edge in $\partial \lambda$ that is in $\Lambda_O$ (see \cite[p. 182]{toselli_domain_2004} for more details).

\end{proof}
In a weak scaling test (i.e., refining the mesh and the partition at the same rate), $H/h$ is constant and so is the condition number bound regardless of the problem size.  This property leads to an optimal weak scaling of the preconditioner (i.e., to increase $h$ and $H$ in the same proportion) when used within a conjugate gradient iteration. That is, the number of iterations needed to solve the problem up to a certain (relative) tolerance is proportional to the condition number $k_2(\M\Ac)$,  and as the condition number is bounded by a constant independent of the problem size, so are also the number of iterations. The coarse DOFs can be reduced, including only a subset of edges in $\Lambda_O$. We refer to \cite{klawonn_dual-primal_2002}  for more details and the extension to linear elasticity problems.

%{Then, I will propose a Method A, for which I will prove that the method works but is expensive. Next, I will propose a method such that the ratio is bounded for the diagonal matrix, and thus, not theory. I will show that it is affected by jumps in the weighting, which will motivate the second Method B. Finally, I will propose a heuristic method C.}

%{Breakdown outline: The typical corner/edge/face restrictions are not restrictions anymore, additional energy terms, cannot be bounded. Put silly example and counterexample. Thus, say that cannot be robust with respect to cuts.}

%{Improvements: Do a partition to minimize cut elements on faces; Neumann boundary cut elements in touch with full elements, we can take the value from the full one, and it should work. Cut/cut elements on interface, as corners. Dirichlet cut nodes should be corners too. Not all as coarse DOFs. Further reduction by acceptable path.}

%{Method A: Put corners such that the problem is fixed. All interface nodes on the subdomain interface to the coarse space. Thus, theory like in domain decomposition, saying that the inverse inequality is with $h$. Expensive but d-2.}

%{Method B: with the previous corrections.}

%{Method B-C: Develop a method that bounds the norm of the weighting operator with respect to the matrix diagonal (Method C). Say that it can be considered with respect to orthogonal basis but expensive. (Method B). Finally, show the effect of the weighting variations, and fix a tolerance. Relations with deluxe scaling?.}

%{Method D: Say that big jumps when cutting edges, only these additional terms.}

\section{Domain decomposition solvers for unfitted FE meshes}
\label{sec:bddc-unfitted}

%SANTI

The BDDC method presented in the previous section can be applied verbatim to unfitted meshes. In that case, the computational mesh is $\pelact$ and the extended domain $\domact$. It obviously requires specific strategies for integrating the problem matrices within cut elements and to impose Dirichlet conditions on unfitted boundaries, but the BDDC solver can be applied unaltered. The question is, however, whether the performance of BDDC is strongly deteriorated or not when moving from body-fitted to unfitted meshes. %By looking at the condition number bound \eqref{eq:cond-estim}, it is clear that we might expect problems for unfitted grids. 

The bound for the condition number in \eqref{eq:cond-estim} cannot be extended to unfitted methods. One of the problems is that \eqref{eq:restbou} for the physical subdomain $\omega\cap\Omega$, namely
\begin{equation}
\rest_{\lambda}^\omega (v) = 0, \quad  \hbox{on} \quad \partial(\omega\cap\Omega) \setminus \lambda, \quad \forall \lambda \in \Lambda,\quad \forall \omega \in {\rm neigh}(\lambda),
\end{equation}
is not true for unfitted meshes, since a partition can have cut cells on the subdomain interface.  As a result, the mathematical theory of substructuring domain decomposition methods, which is grounded on the object decomposition of functions and trace theorems, cannot be readily applied. In fact, it is easy to find examples that prove that the standard BDDC method breaks down for unfitted meshes.

\subsection{Breakdown example} 
\label{sec:breackdown-ex}

{Assuming that  \eqref{eq:restbou} holds for the unfitted problem at hand (which cannot be enforced in practice), it is easy to check that the standard analysis of BDDC methods, relying on trace inequalities, holds.  We observe that $w$ in \eqref{eq:weicon} vanishes on corners for $v \in \Vbddc$, since corner values are identical on all subdomains. Thus, only edge and face objects must be considered. Let us consider edge terms. We consider $\lambda \in \Lambda_E$ and $\omega, \omega' \in {\rm neigh}(\lambda)$. Using the standard analysis of BDDC methods, we note that the mean value of any $v \in \Vbddc$ is continuous between subdomains for BDDC(ce) and BDDC(cef), i.e., $\alpha_\lambda^\omega(v) = \alpha_\lambda^{\omega'}(v)$. We have:
\begin{align}
\| \J_{\omega,\omega'}^\lambda (v) \|_\A & = 
| \J_{\omega,\omega'}^\lambda (v) |_{H^1(\omega)} = C 
| \J_{\omega,\omega'}^\lambda (v) |_{H^{\frac{1}{2}}(\partial \omega)} \\ & \leq
| \rest_\lambda^\omega(v) - \alpha_\lambda^\omega(v) |_{H^{\frac{1}{2}}(\partial \omega)} + 
| \rest_\lambda^{\omega'}(v) - \alpha_\lambda^{\omega'}(v) |_{H^{\frac{1}{2}}(\partial \omega')},
\label{bound-j} 
\end{align}
where we have used the trace theorem in $H^1(\omega)$, the fact that the mean value of functions in $\Vbddc$ are continuous between subdomains, the fact that $\J_{\omega,\omega'}^\lambda (v)$ is discrete harmonic, and \eqref{eq:restbou}. More specifically, \eqref{eq:restbou} is used to say that   $\J_{\omega,\omega'}^\lambda (v) = 0$ on $\partial\omega \setminus \lambda$. We proceed analogously for faces in BDDC(cef). However, if \eqref{eq:restbou} does not hold, \emph{additional terms would appear on the boundary, e.g., the ones related to Nitsche's method, in the first equality in \eqref{bound-j}, that cannot be bounded as above}.\footnote{We note that Nitsche's methods for unfitted FEMs can require arbitrarily large penalty methods also affected by the small cut cell problem.} Even for Neumann boundary conditions on the unfitted boundary only, bounds cannot be independent of cuts, as described with the following example.}
%
%The face terms in BDDC(ce) can be bounded as follows:
%\begin{align}
%\| \J_{\omega,\omega'}^\lambda (v) \|_\A & = 
%\| \J_{\omega,\omega'}^\lambda (v) \|_{H^1(\omega)} = C 
%\| \J_{\omega,\omega'}^\lambda (v) \|_{H^{\frac{1}{2}}(\partial \omega)} \\ & \leq
%\| \rest_\lambda^\omega(v) - \alpha_\lambda^\omega(v)\|_{H^{\frac{1}{2}}(\partial \omega)} + 
%\| \rest_\lambda^{\omega'}(v) - \alpha_\lambda^{\omega'}(v) \|_{H^{\frac{1}{2}}(\partial \omega')}. 
%\end{align}

Let us consider the problem in Fig.~\ref{fig:ex_rectangle} and the BDDC(c) preconditioner in 2D. The left side of the domain is at distance $\epsilon$ to the interface. The underlying PDE is a Poisson problem with Neumann boundary conditions on the left (moving) side and Dirichlet Boundary conditions on the rest of the boundary, defined on a 2D rectangular domain that is dicretized with an unfitted mesh of bilinear Lagrange elements and partitioned into eight uniform subdomains.  The example is designed in such a way that, as $\varepsilon$ tends to zero, the area of subdomains $\omega^1$ and {$\omega^5$} tend to zero (see Fig.~\ref{fig:ex_rectangle:geom}). {Now, we consider a function $v \in \Vbddc$ that vanishes everywhere but in {$\omega^1$}, where it has value 1 on all DOFs that belong to the edge shared by {$\omega^1$} and {$\omega^2$} and zero elsewhere. Thus, the energy of the function before weighting is $\int_{\omega^1} |\nabla v|^2 = C_1 \epsilon$, where $C_1$ is a constant independent from $h$. On the other hand, after communication, the energy related to $\omega^2$ is $\int_{\omega^2} |\nabla v|^2 \geq C_2$ for some $C_2$ independent of $\epsilon$ and $h$. As a result, the condition number is bounded by $\frac{C_2}{C_1 \epsilon}$.\footnote{We note that it could not happen if \eqref{eq:restbou} would hold (as in body-fitted methods), because the gradient would increase as $\frac{1}{\epsilon}$.}} As a result, 
\begin{equation}
\lambda_{\rm max} = \max_{\tilde v \in \Vbddc} \frac{\langle \A \Q \tilde v,\Q \tilde v\rangle}{\langle \A \tilde v, \tilde v \rangle} \label{eq:keybou}
\end{equation}
tends to $\infty$ as $\varepsilon \to 0$, i.e., the condition number of the BDDC preconditioner cannot be uniform with respect to the surface intersection. (Similar examples can be designed for 3D problems and the other variants of the preconditioner.) It is clearly observed that the usual topological weighting operator leads to a solver that is not robust with respect to the position of the interface. It is also observed experimentally in Fig.~\ref{fig:ex_rectangle} that the iteration count explodes as $\varepsilon$ tends to zero.

\begin{figure}[ht!]
\begin{subfigure}[b]{0.45\textwidth}
\includegraphics[width=\textwidth]{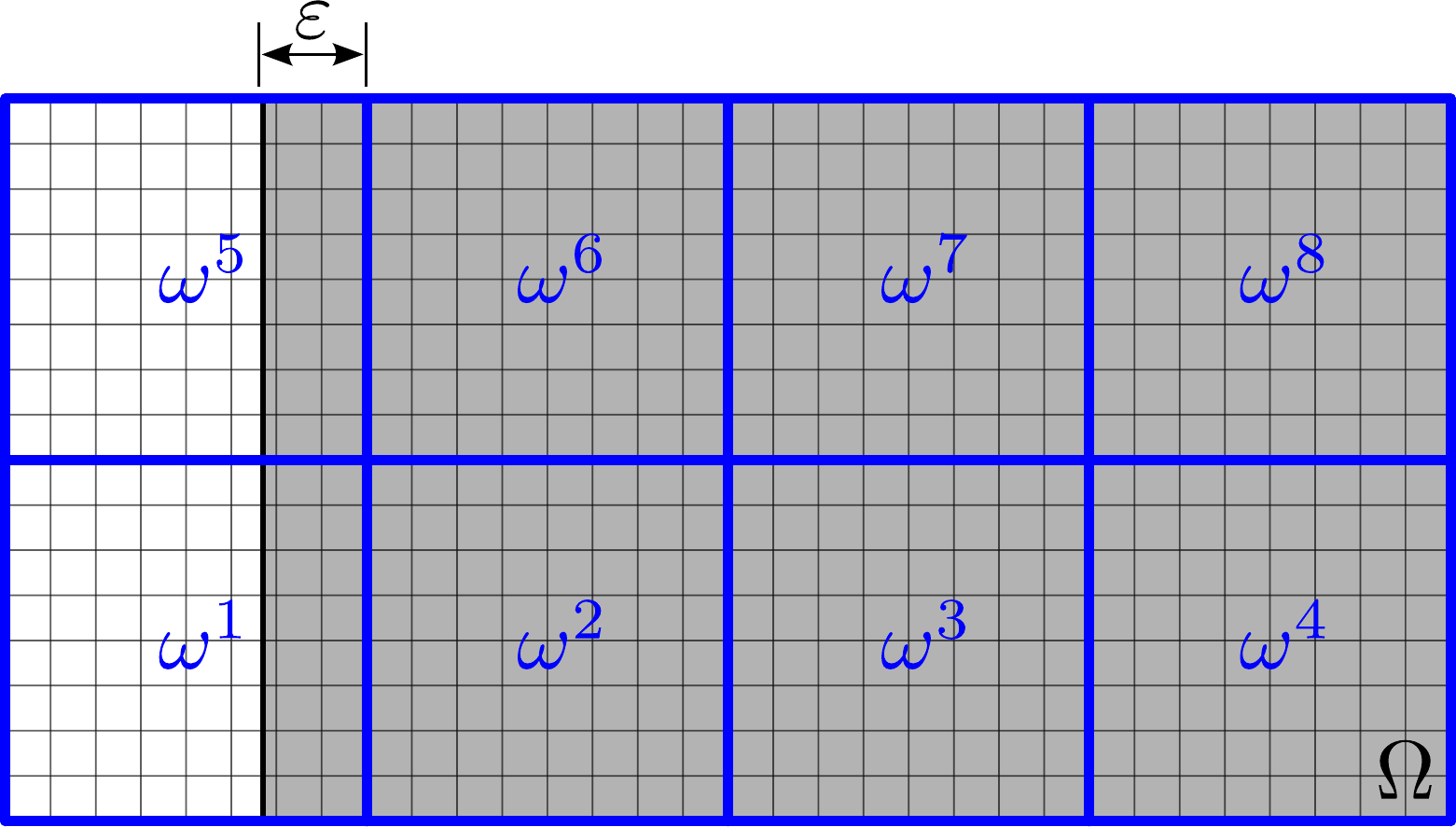}
\vspace{0.2cm}
%\begin{small}
%Homogeneous Neumann boundary
%Homogeneous Dirichlet boundary
%Non-homogeneous Neumann boundary
%\end{small}

\vspace{0.1cm}
\caption{}%Computational domain.}
\label{fig:ex_rectangle:geom}
\end{subfigure}
\hspace{0.3cm}
\begin{subfigure}[b]{0.5\textwidth}
\includegraphics[width=\textwidth]{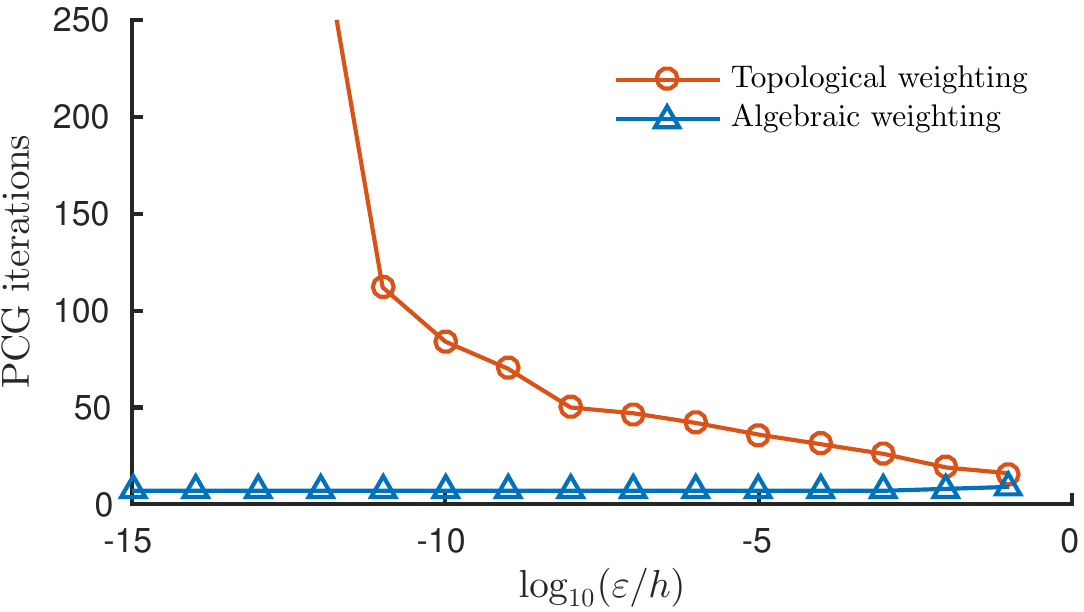}
\caption{}%Robustness w.r.t. the position of the interface.}
\label{fig:ex_rectangle:iters}
\end{subfigure}
\caption{Algebraic weighting vs topological weighting.}
\label{fig:ex_rectangle}
\end{figure}

\subsection{Enlarged coarse spaces and adaptive BDDC methods} 
In order to fix the BDDC preconditioner for unifitted FE meshes, one could consider as coarse corner DOFs all the nodes that violate \eqref{eq:restbou} and are on the interface. Such approach would lead to a huge space. The authors note that reduced coarse spaces could be considered, especially when the cut cells are related to Neumann boundary conditions. In any case, even though improved solutions can be designed, the coarse space is still too large for practical computations. In the following section, we consider a different approach related to stifness weighting.

Recently, adaptive BDDC methods have been developed based on the idea to use specral solvers to define the coarse space in such a way that the bound for \eqref{eq:keybou} can be controlled by the user (see, e.g., {\cite{pechstein2016unified,sousedik_adaptive-multilevel_2013-1,klawonn_feti-dp_2015,zampini_robustness_2016,kim_bddc_2016,calvo_adaptive_2016}}). The analysis of these methods does not rely on trace theorems (a difference with respect to classical BDDC methods). The analysis is algebraic (not relying on trace theorems, functional analysis tools, or FE inverse inequalities) and these algorithms could readily be used for unfitted FE methods to lead to provably robust solvers. However, such approach has two limitations: the additional cost related to the eigenvalue solvers is very large and it only pays the price in some cases, and it would lead to a severe load imbalance (only subdomains intersected by the surface would really require these eigenvalue solvers for many problems of interest). For these reasons, we do not pursue this line in this work.

\subsection{Stifness weighting operator}
\label{sec:new-weighting}

In this section, we will present a modification of the BDDC preconditioner that has been experimentally observed to be robust with respect to small cuts in unfitted FE methods with a moderate increase of the coarse solver size. The use of stiffness matrix-based weighting has previously been used in other situations \cite{dohrmann_preconditioner_2003} with good results, {e.g., to increase robustness of BDDC preconditioners  in isogeometric analysis \cite{beirao_da_veiga_bddc_2012}}. Unfortunately, there are no rigorous condition number bounds for this approach.

We can represent functions in $\Vc$ as vectors in $\mathbb{R}^{|\mathcal{N}|}$, and define the symmetric positive definite matrix $A_{ab} \doteq \langle \A \phi_b, \phi_a\rangle$ (analogously for the subassembled subdomain matrix $A_{ab}^\omega$). We introduce the so-called \emph{algebraic} weighting operator  $\mathcal{W}: \V \rightarrow \Vc$ for a given  function $u\in\V$ and a node $\xi$ of the mesh:  
\begin{equation}
\mathcal{W}u(\xi) \doteq \frac{\sum_{\omega \in {\rm neigh}(\xi)}A_{\xi \xi}^\omega u_\omega(\xi)}{\sum_{\omega \in {\rm neigh}(\xi)}A_{\xi \xi}^\omega}, %\qquad \xi \in \Xi.
\label{eq:alge-weighting}
\end{equation}
Formula \eqref{eq:alge-weighting} provides the node values after the weighting, which uniquely defines a FE function in $\Vc$. This type of weighting is referred to as the \emph{algebraic} weighting since it is defined from the problem matrices, in contrast to the topological weighting (see Sect.~\ref{sec:topo-weighting}) that is only based on the topology of the partition.

As we will show, the algebraic weighting leads to a much more robust BDDC method with respect to the relative position of domain boundary $\partial\Omega$ and the unfitted grid. We consider the 2D example in Fig.~\ref{fig:ex_rectangle}, for which the BDDC preconditioner with topological weighting is not robust. Fig.~\ref{fig:ex_rectangle:iters} shows the number of PGC iterations in function of the position of the interface. 
%The results are for two different types of weightings, the  \emph{topological} weighting previously presented in Sect. \TBD{} , and the \emph{algebraic} weighting to be presented below.  
It is clearly observed that the usual topological weighting operator leads to a solver that is not robust with respect to the position of the interface. The iteration count explodes as $\varepsilon$ tends to zero. On the other hand, the algebraic weighting operator leads to an extremely robust solver. The number of iterations are nearly independent of the position of the interface, even when the value $\varepsilon/h$ reaches machine precision.

Using the algebraic weighting, we have successfully solved also complex 3D examples (see the numerical results in Sect.~\ref{sec:examples}). However, we have observed that the number of iterations needed to solve the systems for meshes with cut elements are (about 2 times) higher that the iterations needed to solve the same problem without cutting the mesh elements. This increase in iterations is not dramatic, but further improvement is possible. To this end, we propose two further enhancements of the BDDC method based on the definition of the coarse objects that lead to almost identical iteration counts than the case in which all cut cells are filled and the standard BDDC method is used. 

\subsection{New coarse objects: variant 1}
\label{sec:new-objects-v1}

%2) Consider constraints on objects without cut elements that provide
%what is needed for a scalable method. We can do that for edges. The
%idea is that in the analysis for a given edge, the estimates blow up
%for cut edges.

%3) Increase the number of constraints, based on the variations of the
%weighting operator at one object. All the existing domain decomposition theory requires
%constant weighting coefficients, in order to prove estimates.

%Even though we have no complete mathematical theory for the
%preconditioner, its designed is based on the mathematical theory of domain decomposition methods.

The current mathematical analysis of BDDC methods is for body-fitted meshes, and therefore, it implicitly assumes that the coarse objects are not intersected by the boundary $\partial\Omega$. However, when body-fitted meshes are allowed, the objects can be cut by $\partial\Omega$ (see Fig.~\ref{fig:new-corners-1:a}). In this situation, the main mathematical properties required to prove scalability of the BDDC method are lost. It can lead to situations in which the corner constraints are exterior corners of cells with close to zero $\eta_e$, possibly leading to close to singular local problems. Motivated by this fact, we propose a modification in the definition of the coarse objects, which eliminates intersections with the boundary $\partial\Omega$. {Since \emph{fixing} the elements of the kernel (the constant for the Laplacian or rigid-body modes for elasticity)} for all the edges is enough in 3D (see, e.g., \cite{klawonn_dual-primal_2002}), we will only consider the modification of edge coarse DOFs.

\begin{figure}[ht!]

\centering

{\color{blue}$\bullet$} Nodes on coarse edges \hspace{0.5cm} {\color{red} $\boldsymbol{\times}$} New corners

\begin{subfigure}[b]{0.33\textwidth}
\includegraphics[width=\textwidth]{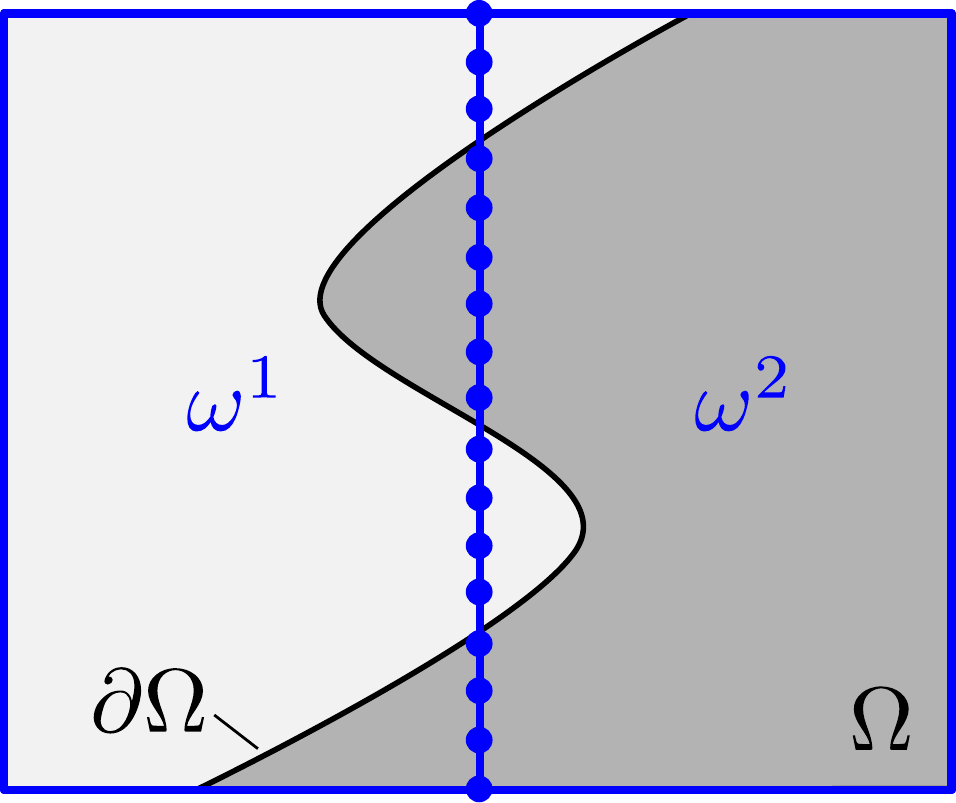}
\caption{Before}
\label{fig:new-corners-1:a}
\end{subfigure}
\hspace{0.5cm}
\begin{subfigure}[b]{0.33\textwidth}
\includegraphics[width=\textwidth]{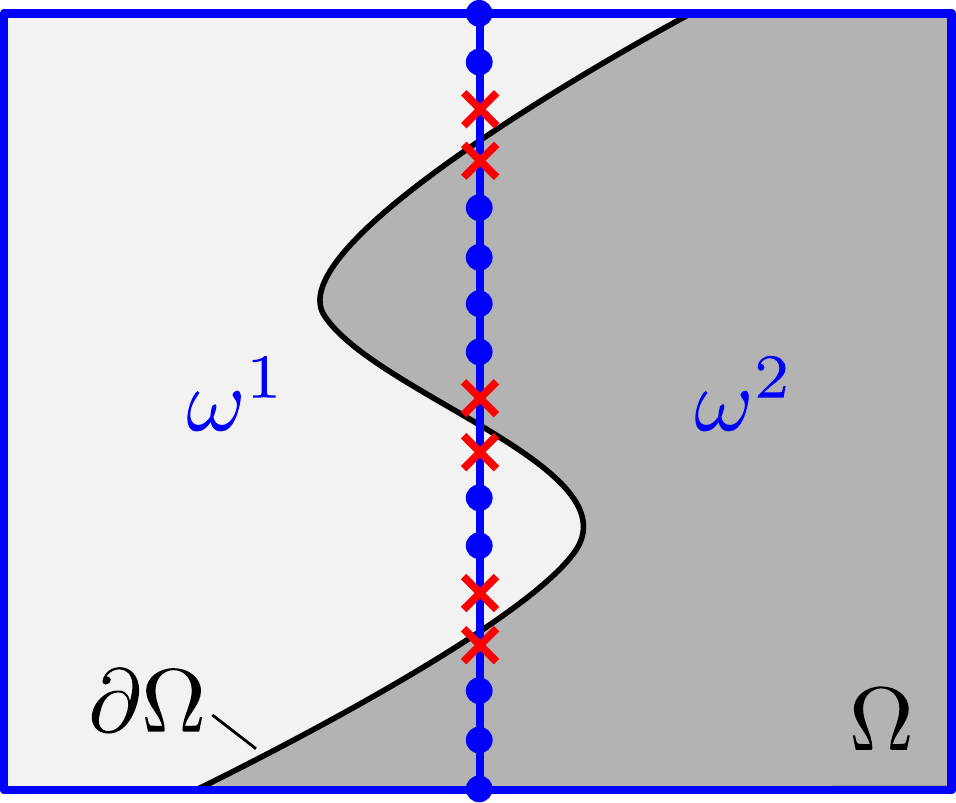}
\caption{After}
\label{fig:new-corners-1:b}
\end{subfigure}
\caption{Splitting a cut edge into several uncut edges (first variant).}
\label{fig:new-corners-1}
\end{figure}

The new objects are computed in two main steps. The first one is to build the standard objects $\Lambda_O$ of the BDDC method for $\pelact$, as presented in Sect.~\ref{sec:std-objects}. The second step is to identify the edges in $\Lambda_O$ cut by the boundary and split them into new edges/corners, leading to uncut edges (see Fig.~\ref{fig:new-corners-1}). We consider a segregation of a coarse edge $\lambda \in \Lambda_E$ as follows. Given the set of FE edges $t \subset \lambda$, we segregate them into interior edges $t \subset \Omega$, exterior edges such that $t \cap \Omega = \emptyset$, and the remaining cut edges. All DOFs on cut edges are considered as coarse corner DOFs. The rest of FE edges are agregated into \emph{connected} subedges (see Fig. \ref{fig:new-corners-1:b}).  This procedure is done efficiently by using the data structures commonly available in domain decomposition codes and immersed boundary methods. It requires to perform a loop on the edges objects (which are usually identified anyway in a domain decomposition code), and determine if those edges are intersected by the interface or not. Intersection points are usually already pre-computed in immersed boundary codes. As a result, no further significant overhead is introduced. Moreover, this operation has to be performed only in the subdomains cut by the boundary, which is typically a small set. The numerical examples (see Sect.\ref{sec:examples}) show that by splitting edge objects in this way, optimal weak scaling is recovered if Neumann boundary conditions are imposed on the unfitted parts of the domain boundary $\partial\Omega$.

\subsection{New coarse objects: variant 2}
\label{sec:new-objects-v2} 

However, the case of weak imposition of Dirichlet conditions has been observed to be more challenging and requires further elaboration. In that case, we propose a second strategy to split the edges. {The method is motivated by the fact that the  domain decomposition analysis of non-spectral methods relies on trace theorems, and it requires constant weighting coefficients within each coarse object in order to derive condition number bounds (see {Eq. \eqref{eq:weicon}} and, e.g., \cite[p. 141]{toselli_domain_2004}, \cite{mandel_convergence_2003}, or \cite[p. 208]{brenner_mathematical_2010}).}\footnote{{We note that the numerical analysis of more involved non-diagonal weighting operators (e.g., deluxe scaling, based on spectral methods) does not rely on trace theorems.}}  We note that the coefficient  $\beta$ required to ensure coercivity in Nitsche's method might be very different between neighbor elements (even different orders of magnitude) in function of the location of the interface. This induces strong variations in the coefficients of the stiffness matrix, and therefore highly oscillatory weighting coefficients, when they are computed as in \eqref{eq:alge-weighting}. As a result, the weighting coefficient might be non-constant within the BDDC objects cut by the interface (see Fig.~\ref{fig:new-corners-2:a}), which violates one of the basic assumptions of the domain decomposition analysis. Motivated by this fact, we try to build objects that have constant  weighting coefficients. To this end, we follow two steps.  First, we build the standard BDDC objects (Sect.~\ref{sec:std-objects}). Then, we split all coarse objects classified as \emph{edges} that have non-constant weighting coefficients. Precisely, we aggregate neighbor nodes on the edge that have the same weighting coefficient {(note that many nodes on a coarse edge have the same coefficient, up to machine precision, for a structured meshes and linear elements)}, and then we set the remaining un-aggregated nodes as coarse corners (see Fig.~\ref{fig:new-corners-2:b}). Since edges are one dimensional curves, this aggregation can be easily performed. Moreover, this process is performed only for the small set of subdomains that are cut by the boundary. %We keep the faces untouched, because face objects are not strictly needed in the BDDC analysis and it would be computationally expensive to treat them. 
By modifying the coarse objects in this way, an optimal weak scaling is also recovered for weak Dirichlet boundary conditions, as showed in the following 3D numerical examples.

\begin{figure}[ht!]

\centering

{\color{blue}$\bullet$} Nodes on coarse edges \hspace{0.5cm} {\color{red} $\boldsymbol{\times}$} Nodes on coarse corners

\begin{subfigure}[b]{0.33\textwidth}
\includegraphics[width=\textwidth]{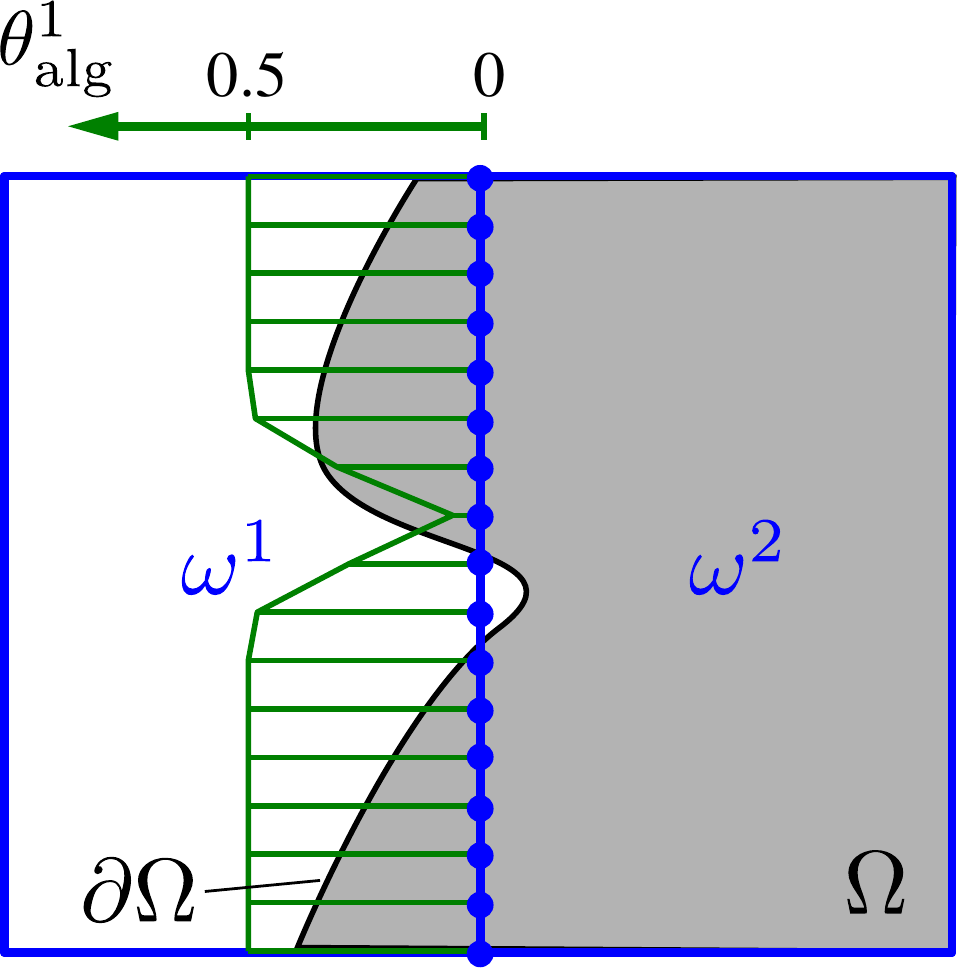}
\caption{Before}
\label{fig:new-corners-2:a}
\end{subfigure}
\hspace{0.5cm}
\begin{subfigure}[b]{0.33\textwidth}
\includegraphics[width=\textwidth]{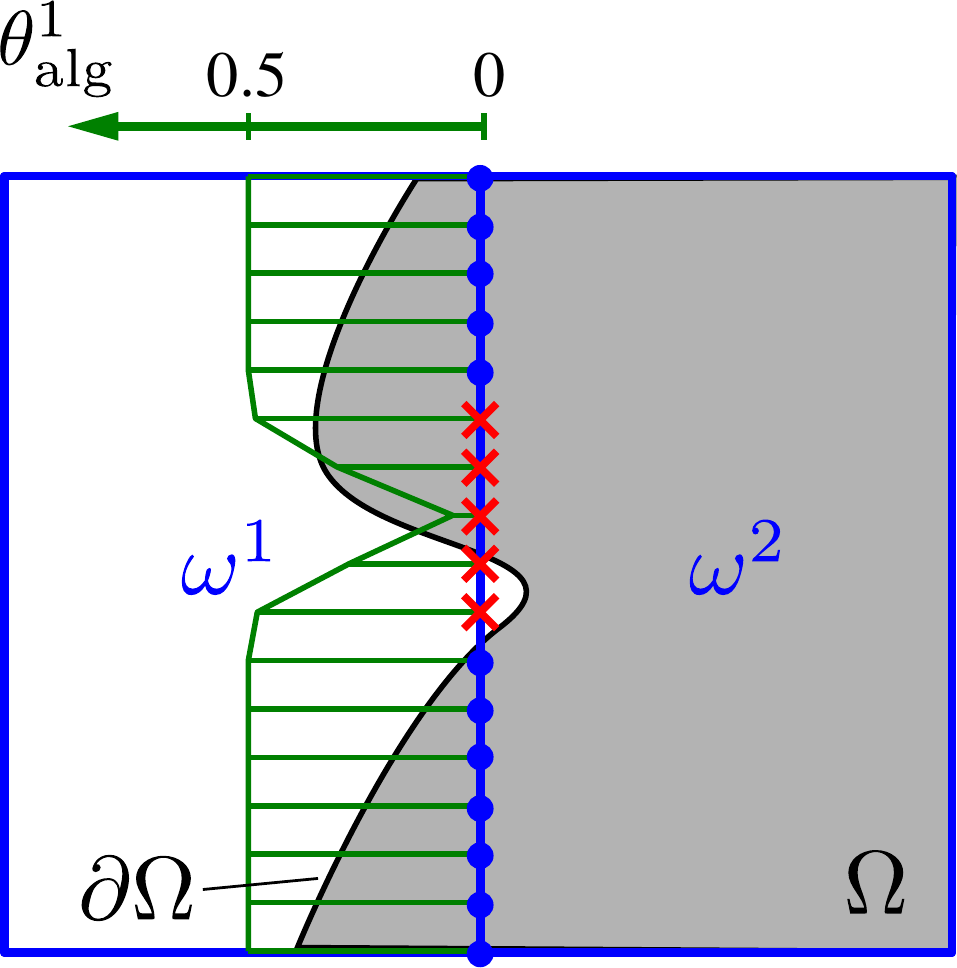}
\caption{After}
\label{fig:new-corners-2:b}
\end{subfigure}
\caption{Splitting a cut edge into several uncut edges (second variant).}
\label{fig:new-corners-2}
\end{figure}

\section{Numerical experiments}
\label{sec:examples}
% FRANCESC

%All the examples by Francesc.

%Goal of the numerical experiments is to show that:
%\begin{itemize}
%\item Proposed BDDC methods are weakly scalable for cut elements.
%\item The proposed corner-detection mechanisms do not introduce too many dofs in the coarse space.
%\item Scaling properties nearly the same as standard bddc for usual body fitted meshes.
%\item Methods applicable to different complex geometries.
%\item Methods robust w.r.t. position of the cut elements.
%\item Effect on processor load balancing.
%\end{itemize}

%The numerical examples below study the performance of the proposed BDDC preconditioners when solving the system of linear equations arising from the discretization of the Poisson equation using the embedded FE methods presented in Sect. \TBD{} ...

\subsection{Setup}

%Before studying the performance of the proposed methods, we detail here the setup used in the numerical experiments. 

In the numerical examples below, we consider the Poisson problem with the discretization being used in Sect. \ref{sec:modpr} defined on five different complex 3D domains shown in Fig.~\ref{fig:geometries}. The chosen geometries are 1) a sphere, 2) a body that reminds to a popcorn flake, 3) a hollow block, 4) a 3-by-3-by-3 array of such blocks, and 5) a spiral.  These geometries are used often in the literature to study the performance of unfitted FE methods (see, e.g., \cite{burman_cutfem:_2015}).  This variety of shapes is considered here to evaluate the versatility of the proposed solvers and their ability  to deal with complex and diverse 3D domains. {A detailed numerical experimentation} is basic, since the solvers proposed are based on heuristic motivations, and numerical experimentation is the only way to prove their robustness. %The precise definition of each piece is given in Annex \TBD{}  {\color{red} Annex is TODO}. 
\begin{figure}[ht!]
\centering
\begin{subfigure}[b]{0.19\textwidth}
\includegraphics[width=\textwidth]{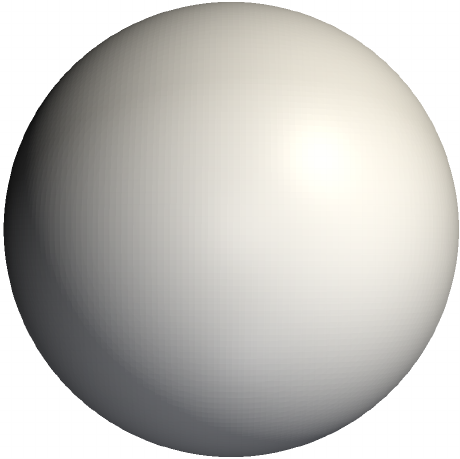}
\caption{Sphere.}
\end{subfigure}
\begin{subfigure}[b]{0.19\textwidth}
\includegraphics[width=\textwidth]{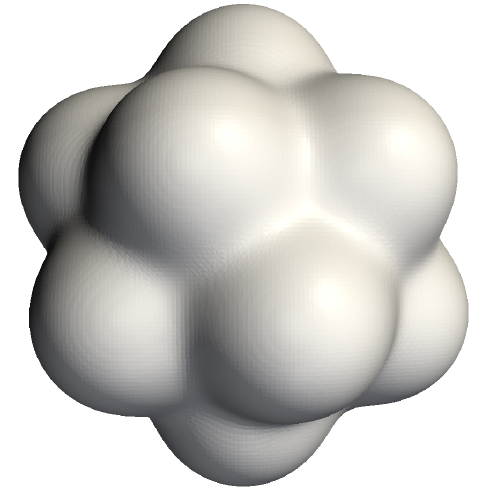}
\caption{Popcorn flake.}
\label{fig:geometries:popcorn}
\end{subfigure}
\begin{subfigure}[b]{0.19\textwidth}
\includegraphics[width=\textwidth]{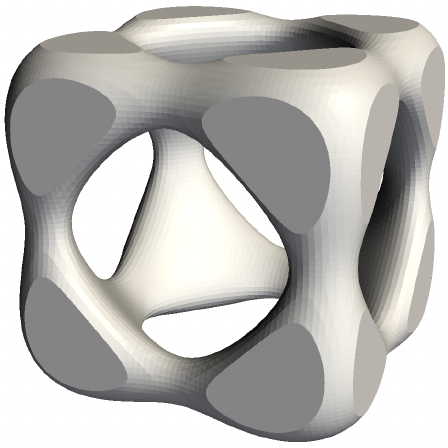}
\caption{Block.}
\end{subfigure}
\begin{subfigure}[b]{0.20\textwidth}
\includegraphics[width=\textwidth]{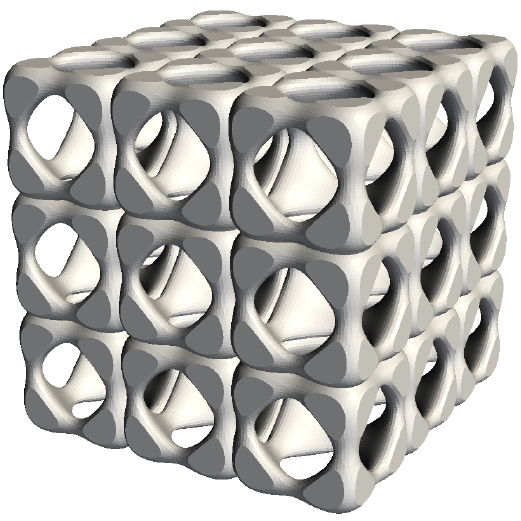}
\caption{Array of blocks.}
\end{subfigure}
\begin{subfigure}[b]{0.15\textwidth}
\includegraphics[width=\textwidth]{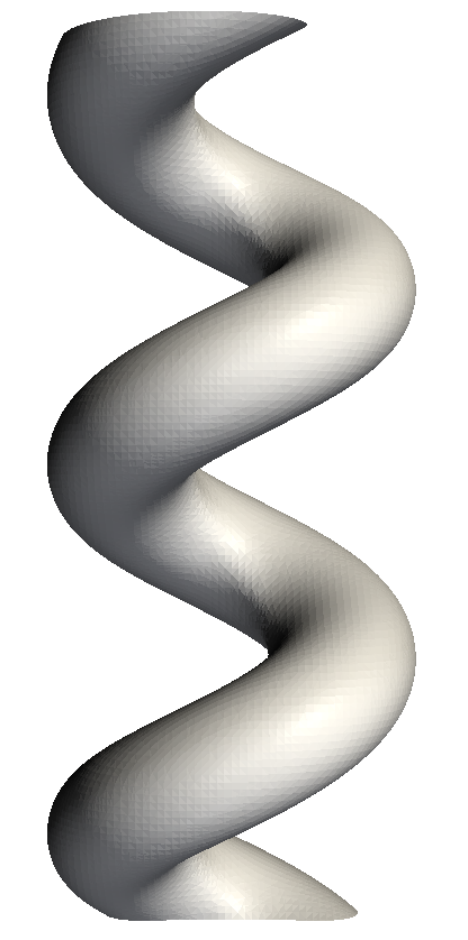}
\caption{Spiral.}
\end{subfigure}
\caption{Geometries used in the numerical examples.}
\label{fig:geometries}
\end{figure} 

%For each piece, the associated computational meshes and partitions into subdomains are systematically generated as indicated in section XX.  

For each geometry, weak scalability tests are executed in order to evaluate the performance of the solvers. To this end, a family of computational meshes are generated for each piece spanning from coarse meshes up to  fine meshes with several millions of elements (see Table~\ref{table:meshes} for details). Note that the subdomain partitions are refined at the same rate than the meshes, leading to a constant ratio $H/h$, which should lead to an optimal weak scaling of the BDDC preconditioner. That is, the number of iterations of the underlying preconditioned conjugate gradient solver is expected to be (asymptotically) independent of the problem size. The main purpose of the numerical example below is to check that the proposed BDDC preconditioners for unfitted meshes have this optimal performance. In the remainder of this sub-section, we further detail the setup of the numerical examples.
\begin{table}[ht!]
\caption{Information of the computational meshes used in the examples. }
\label{table:meshes}
\begin{small}
\begin{tabular}{lcrrrrrrrr}
\toprule
 & Id.  & $n^\mathrm{sd}$  & $n^\mathrm{el}$ & $n^\mathrm{sdel}$  & $n^\mathrm{dof}$ &  $n^\mathrm{sddof}$   &  $h$ & $H$ &  $H/h$\\
\midrule
       Sphere & 1 &       8 &    1064 &     512 &    1461 &     729 &  0.1250 &  1.0000 &     8.0 \\
              & 2 &      32 &    7160 &     512 &    8577 &     729 &  0.0625 &  0.5000 &     8.0 \\
              & 3 &     160 &   51968 &     512 &   57123 &     729 &  0.0312 &  0.2500 &     8.0 \\
              & 4 &    1064 &  396040 &     512 &  415775 &     729 &  0.0156 &  0.1250 &     8.0 \\
              & 5 &    7160 & 3090104 &     512 & 3167421 &     729 &  0.0078 &  0.0625 &     8.0 \\
\midrule
Popcorn flake & 1 &       8 &    1920 &     512 &    2559 &     729 &  0.1125 &  0.9000 &     8.0 \\
              & 2 &      60 &   12936 &     512 &   15181 &     729 &  0.0563 &  0.4500 &     8.0 \\
              & 3 &     324 &   95256 &     512 &  103701 &     729 &  0.0281 &  0.2250 &     8.0 \\
              & 4 &    1924 &  729768 &     512 &  762497 &     729 &  0.0141 &  0.1125 &     8.0 \\
              & 5 &   12936 & 5708900 &     512 & 5837623 &     729 &  0.0070 &  0.0563 &     8.0 \\
\midrule
        Block & 1 &       8 &    2120 &     512 &    3280 &     729 &  0.2312 &  1.8500 &     8.0 \\
              & 2 &      56 &   14336 &     512 &   18772 &     729 &  0.1156 &  0.9250 &     8.0 \\
              & 3 &     304 &  103152 &     512 &  120164 &     729 &  0.0578 &  0.4625 &     8.0 \\
              & 4 &    2120 &  776584 &     512 &  842652 &     729 &  0.0289 &  0.2312 &     8.0 \\
              & 5 &   14336 & 6011832 &     512 & 6272228 &     729 &  0.0145 &  0.1156 &     8.0 \\
\midrule
        Array & 1 &     216 &   53904 &     512 &   72796 &     729 &  0.2271 &  1.8167 &     8.0 \\
              & 2 &    1512 &  376696 &     512 &  451516 &     729 &  0.1135 &  0.9083 &     8.0 \\
              & 3 &    8208 & 2747880 &     512 & 3064716 &     729 &  0.0568 &  0.4542 &     8.0 \\
\midrule
       Spiral & 1 &       2 &     435 &     512 &     838 &     729 &  0.1500 &  1.2000 &     8.0 \\
              & 2 &      16 &    2277 &     512 &    3501 &     729 &  0.0750 &  0.6000 &     8.0 \\
              & 3 &      96 &   14309 &     512 &   18511 &     729 &  0.0375 &  0.3000 &     8.0 \\
              & 4 &     441 &   99991 &     512 &  115378 &     729 &  0.0188 &  0.1500 &     8.0 \\
              & 5 &    2277 &  744170 &     512 &  802975 &     729 &  0.0094 &  0.0750 &     8.0 \\
              & 6 &   14309 & 5733813 &     512 & 5963098 &     729 &  0.0047 &  0.0375 &     8.0 \\
\bottomrule
\end{tabular}

\begin{tabular}{ll}
Legend\\
$n^\mathrm{sd}$: number of subdomains & $n^\mathrm{el}$: total number of elements \\
$n^\mathrm{sdel}$: max. number of elements in a subdomain & $n^\mathrm{dof}$: total number of DOFs\\
 $n^\mathrm{sddof}$: max. number of DOFs in a subdomain & $h$: element size,  $H$: subdomain size.
\end{tabular} 
\end{small}
\end{table}

We define the source term and boundary conditions of the Poisson equation such that the PDE has an exact solution, namely
\begin{equation}
u(x,y,z) = \sin\left(5\pi\left(x^2 + y^2 + z^2\right)^{1/2}\right), \quad (x,y,z)\in\Omega\subset\mathbb{R}^3.
\end{equation}
We use the exact solution to assess the quality of the numerical approximations provided by the solvers. In order to define the boundary conditions, we partition the boundary $\partial\Omega$ into two non-overlapping sets $\Gamma_\mathrm{bb}$ and $\Gamma_\mathrm{cut}$ (see  Fig.~\ref{fig:boundaries}). The set $\Gamma_\mathrm{bb}$ is the intersection of $\partial\Omega$ with the bounding box used to define the computational mesh, and $\Gamma_\mathrm{cut}$ is its complement, $\Gamma_\mathrm{cut}\doteq \partial\Omega - \Gamma_\mathrm{bb}$. After discretizing the bounding box with a Cartesian mesh, the set $\Gamma_\mathrm{bb}$ results to be aligned with the mesh faces. On the contrary, $\Gamma_\mathrm{cut}$ is a non-conforming boundary living inside the cut elements. We define strong Dirichlet conditions on the boundary $\Gamma_\mathrm{bb}$. Since $\Gamma_\mathrm{bb}$ is aligned with the mesh, strong Dirichlet boundary conditions are imposed by constraining the corresponding mesh nodes. On the other hand, we impose two different types of boundary conditions on the unfitted surface $\Gamma_\mathrm{cut}$, either weak Dirichlet boundary conditions using Nitsche's method or Neumann boundary conditions. Note that for the popcorn flake and the spherical domain, the set $\Gamma_\mathrm{bb}$ is empty since all the boundary $\partial\Omega$ in non-conforming  (see Fig.~\ref{fig:boundaries}). In these cases, we strongly impose the value of the solution at an interior node of $\Omega$ in order to have a well posed problem when considering Neumann conditions on the entire boundary.
\begin{figure}[ht!]
\centering
{\small
{\color[RGB]{0,170,255} \rule{1cm}{8pt}} $\Gamma_\mathrm{bb}$ {\color[rgb]{0.7,0.7,0.7} \rule{1cm}{8pt}} $\Gamma_\mathrm{cut}$\\[-0.4cm]  
}

\begin{subfigure}[b]{0.19\textwidth}
\includegraphics[width=\textwidth]{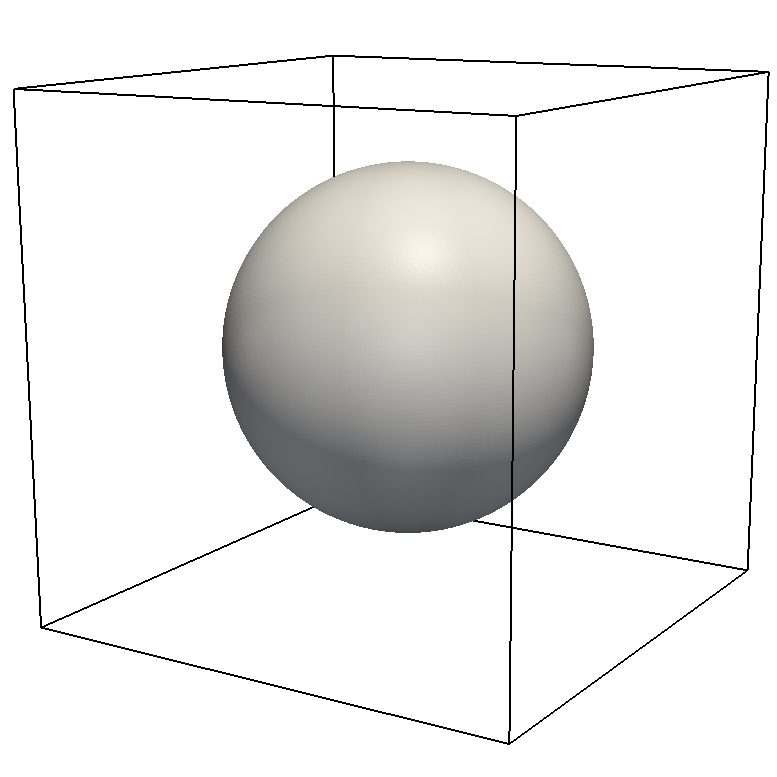}
\caption{Sphere.}
\end{subfigure}
\begin{subfigure}[b]{0.19\textwidth}
\includegraphics[width=\textwidth]{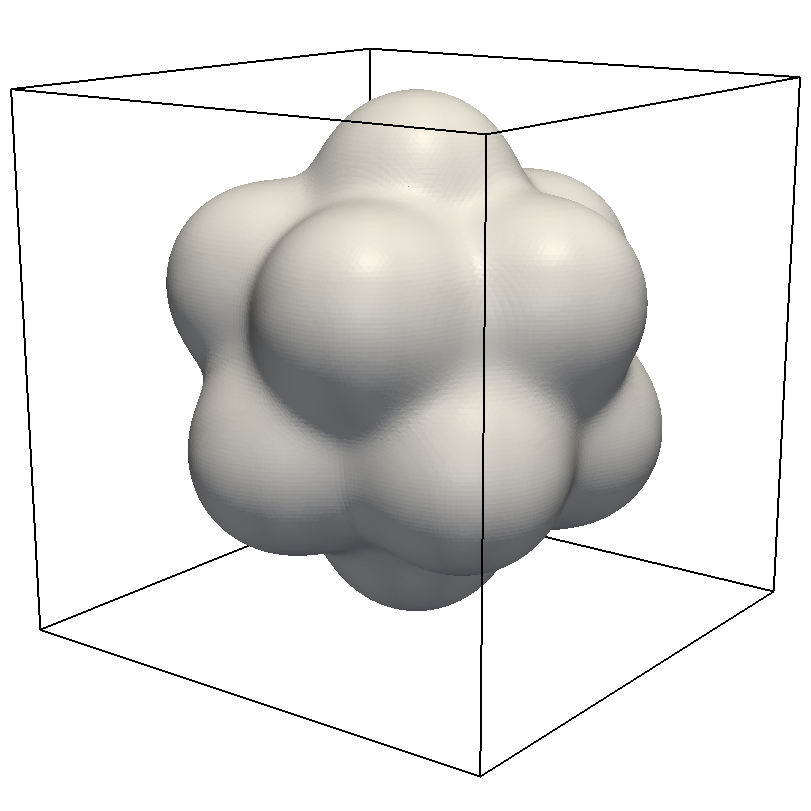}
\caption{Popcorn flake.}
\end{subfigure}
\begin{subfigure}[b]{0.19\textwidth}
\includegraphics[width=\textwidth]{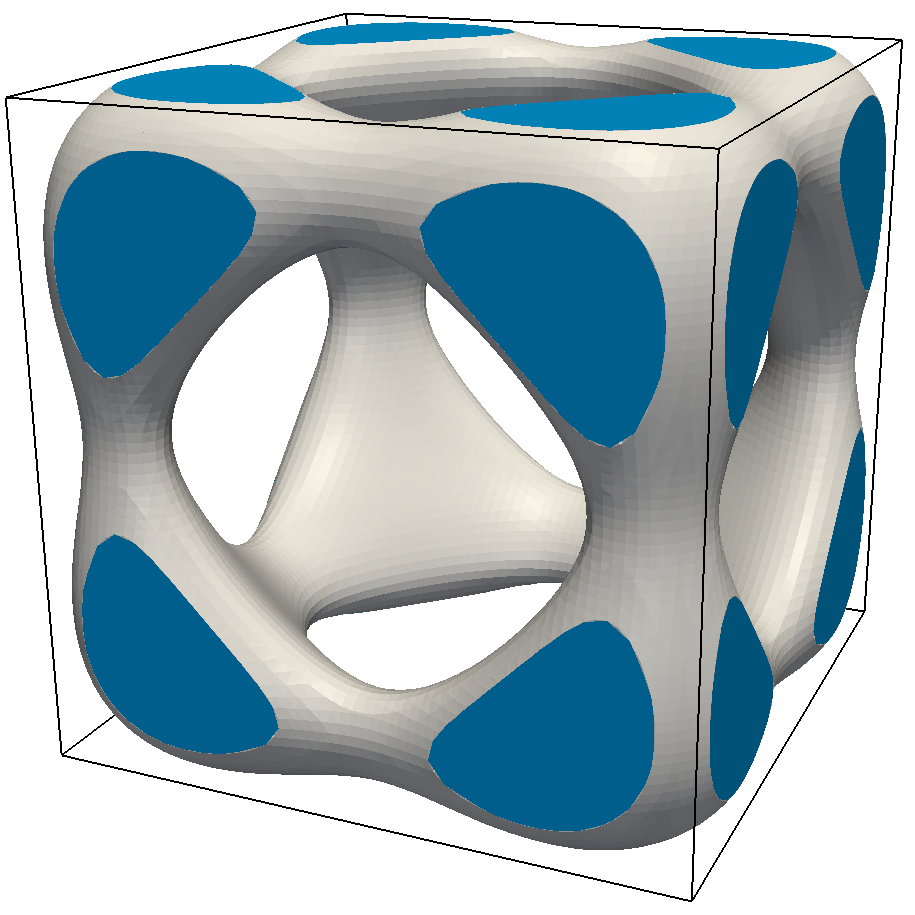}
\caption{Block.}
\end{subfigure}
\begin{subfigure}[b]{0.20\textwidth}
\includegraphics[width=\textwidth]{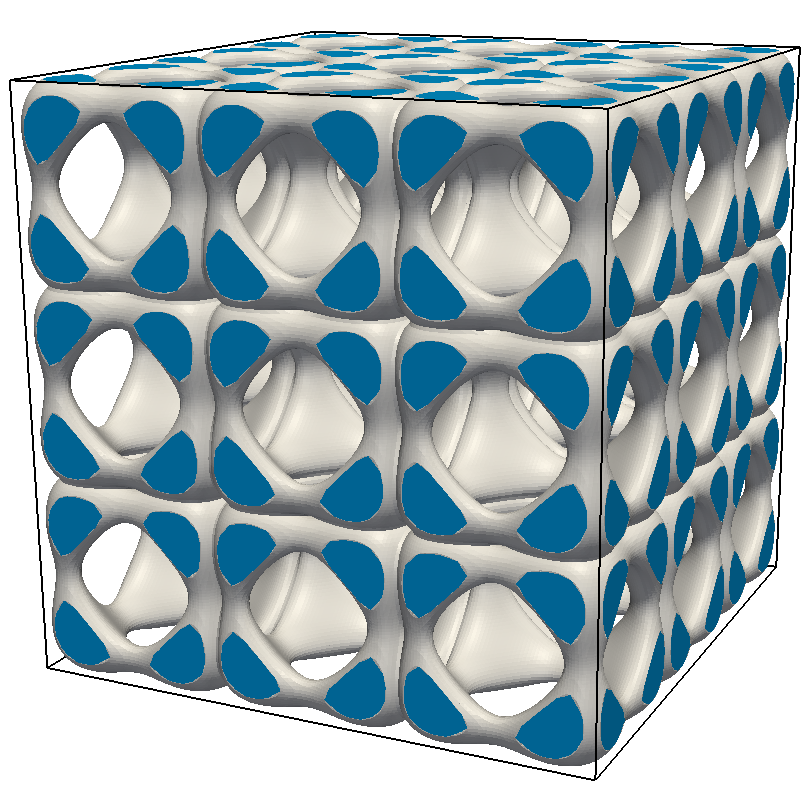}
\caption{Array of blocks.}
\end{subfigure}
\begin{subfigure}[b]{0.16\textwidth}
\includegraphics[width=\textwidth]{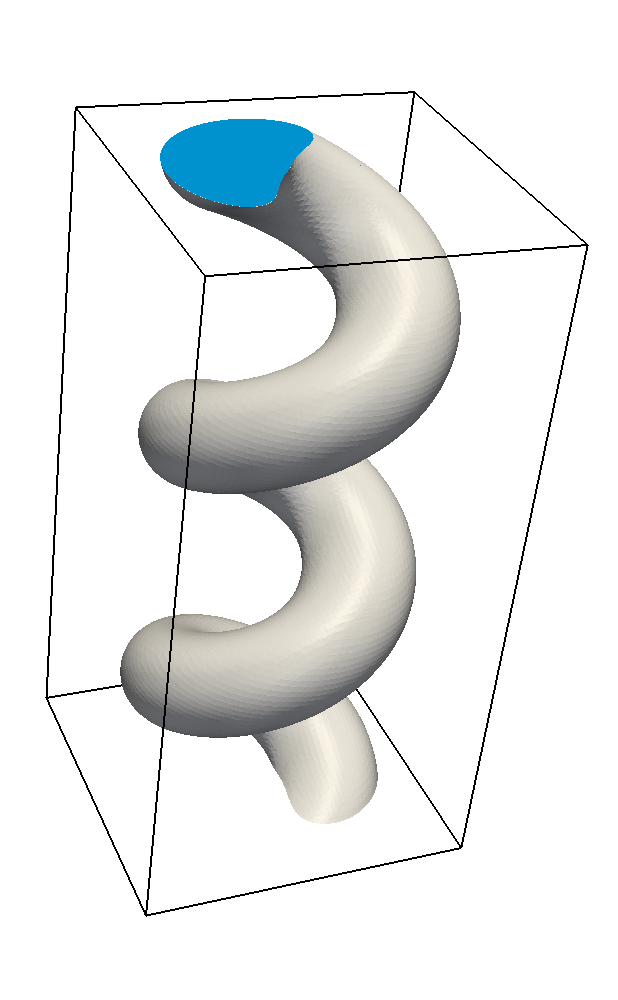}
\vspace{-0.7cm}
\caption{Spiral.}
\end{subfigure}
\caption{Illustration of the sets $\Gamma_\mathrm{bg}$ and $\Gamma_\mathrm{cut}$.}
\label{fig:boundaries}
\end{figure} 

After the discretization process, the resulting systems of linear equations $Ax=b$ are solved with a BDDC-preconditioned conjugate gradient (PCG) solver. %The preconditioners are the proposed BDDC methods in Sect.~\ref{sec:bddc-unfitted}. 
Convergence of the PCG solver is declared when the 2-norm of the algebraic residual $r\doteq b-Ax$ is smaller that the 2-norm of the right-hand-side $b$ times a tolerance, namely $|r|_2 < \mathrm{tol}\cdot|b|_2$. In all cases, the selected tolerance is $\mathrm{tol}=10^{-9}$. With this value, the error committed in solving the linear system is negligible in front of the discretization error associated with the FE mesh. This is demonstrated by the fact that the optimal convergence rate of FEM is obtained by using this tolerance for all the geometries (see Fig.~\ref{fig:errorconv}).  %Thus, the chosen tolerance is strict enough and the number of iterations of the PCG solver is a meaningful indicator to evaluate the performance of the preconditioners.
\begin{figure}[ht!]
\centering
\begin{subfigure}{0.33\textwidth}
\centering
\includegraphics[width=\textwidth]{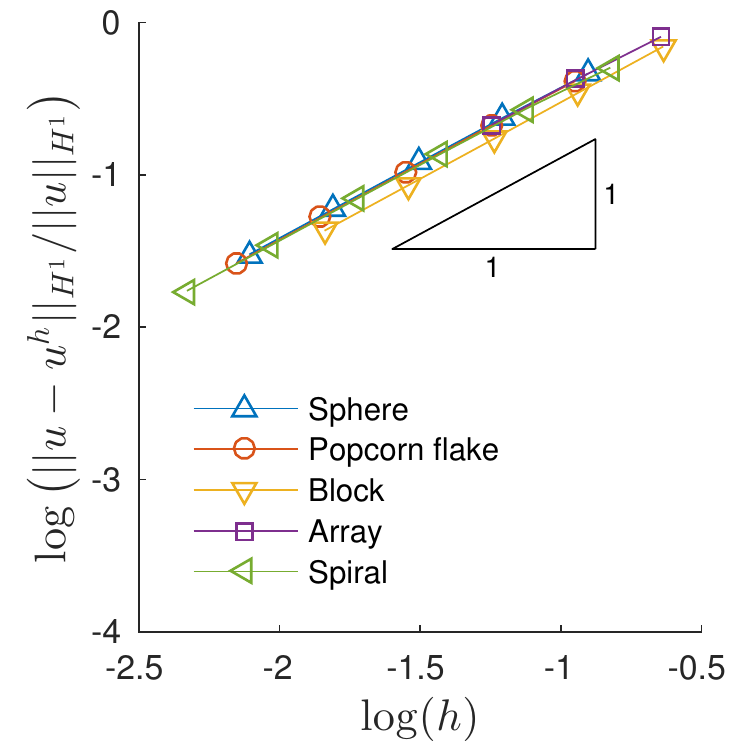}
\end{subfigure}
\begin{subfigure}{0.33\textwidth}
\centering
\includegraphics[width=\textwidth]{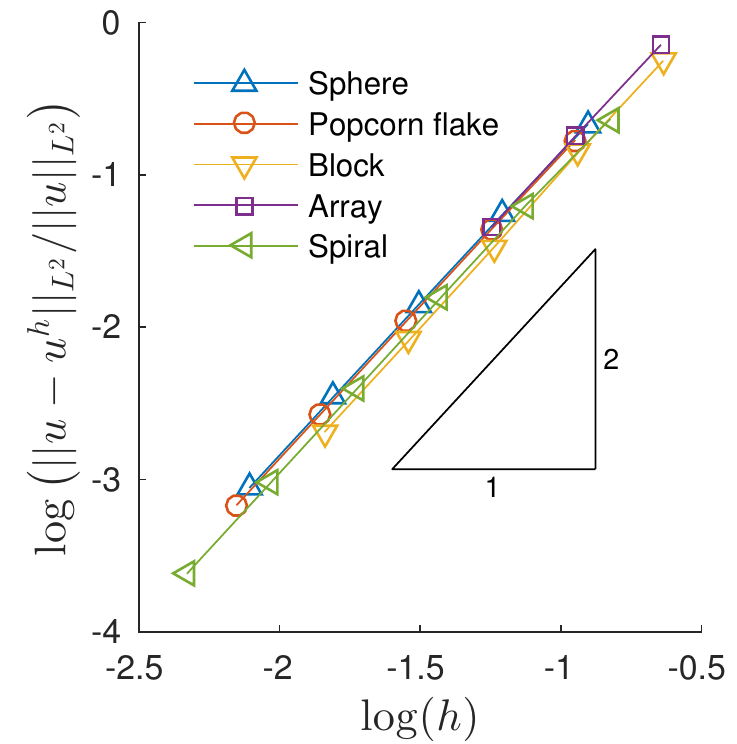}
\end{subfigure}
\caption{Convergence of the discretization error in $H^1$ and $L^2$ norms.}
\label{fig:errorconv}
\end{figure}

Six different versions of the BDDC preconditioner are studied in the examples, see Table~\ref{table:methods}. They differ in the definition of the coarse objects (either the Standard definition of section~\ref{sec:std-objects} or the extended definitions for unfitted meshes in Sects.~\ref{sec:new-objects-v1} and~\ref{sec:new-objects-v2}) and  the subset of objects used to define the coarse DOFs (either corners and edges, or corners edges and faces). In all the cases, the algebraic weighting operator of Sect.~\ref{sec:new-weighting} is considered. {We do not include the topological weighting operator (detailed in Sect.~\ref{sec:topo-weighting}) since it leads to a solver that is not robust with respect to the relative position between the geometry and the mesh  (see, e.g., the breakdown example in Sect. \ref{sec:breackdown-ex}). We have experimentally observed that the topological weighting operator leads to unacceptably large iteration counts for all the 3D geometries studied here. In particular, more than 1000 iterations are needed to solve the ``popcorn flake'' with a medium-size mesh (mesh \# 3 described in Table \ref{table:meshes}) for the case of Neumann boundary conditions on $\Gamma_\mathrm{cut}$.} {Thus, the breakdown of the method does not only appear in \emph{manufactured} examples as the one in \ref{sec:breackdown-ex} but in all practical examples analyzed in this section.}

Finally, in order to compare the performance of the proposed BDDC methods for unfitted grids, with respect to the optimal behavior of BDDC for conventional conforming meshes, we construct auxiliary  meshes by replacing the cut elements by standard full elements (see Fig.~\ref{fig:popcorn_cut_vs_full}). Our goal is to recover a similar performance for cut elements, that the one obtained with full elements.

\begin{table}[ht!]
\caption{Description of the BDDC preconditioners used in the numerical examples.}
\label{table:methods}
\begin{small}
\begin{tabular}{llll}
\toprule
     Identifier                   &  Object definition & Selected objects     & Weighting operator\\
 \midrule
 Standard BDDC(cef)      & As in Sect.~\ref{sec:std-objects} & corners, edges, faces & As in Sect.~\ref{sec:new-weighting}\\
 Standard BDDC(ce)              & As in Sect.~\ref{sec:std-objects} & corners, edges & As in Sect.~\ref{sec:new-weighting}\\
 1st variant BDDC(cef)   & As in Sect.~\ref{sec:new-objects-v1} & corners, edges, faces & As in Sect.~\ref{sec:new-weighting}\\
1st variant BDDC(ce)           & As in Sect.~\ref{sec:new-objects-v1} & corners, edges  & As in Sect.~\ref{sec:new-weighting}\\
2nd variant BDDC(cef)   & As in Sect.~\ref{sec:new-objects-v2} & corners, edges, faces  & As in Sect.~\ref{sec:new-weighting}\\
2nd variant BDDC(ce)          & As in Sect.~\ref{sec:new-objects-v2} & corners, edges   & As in Sect.~\ref{sec:new-weighting}\\
 \bottomrule
\end{tabular}
\end{small}
\end{table}

\begin{figure}[ht!]
\centering
\begin{subfigure}{0.35\textwidth}
\centering
\includegraphics[width=\textwidth]{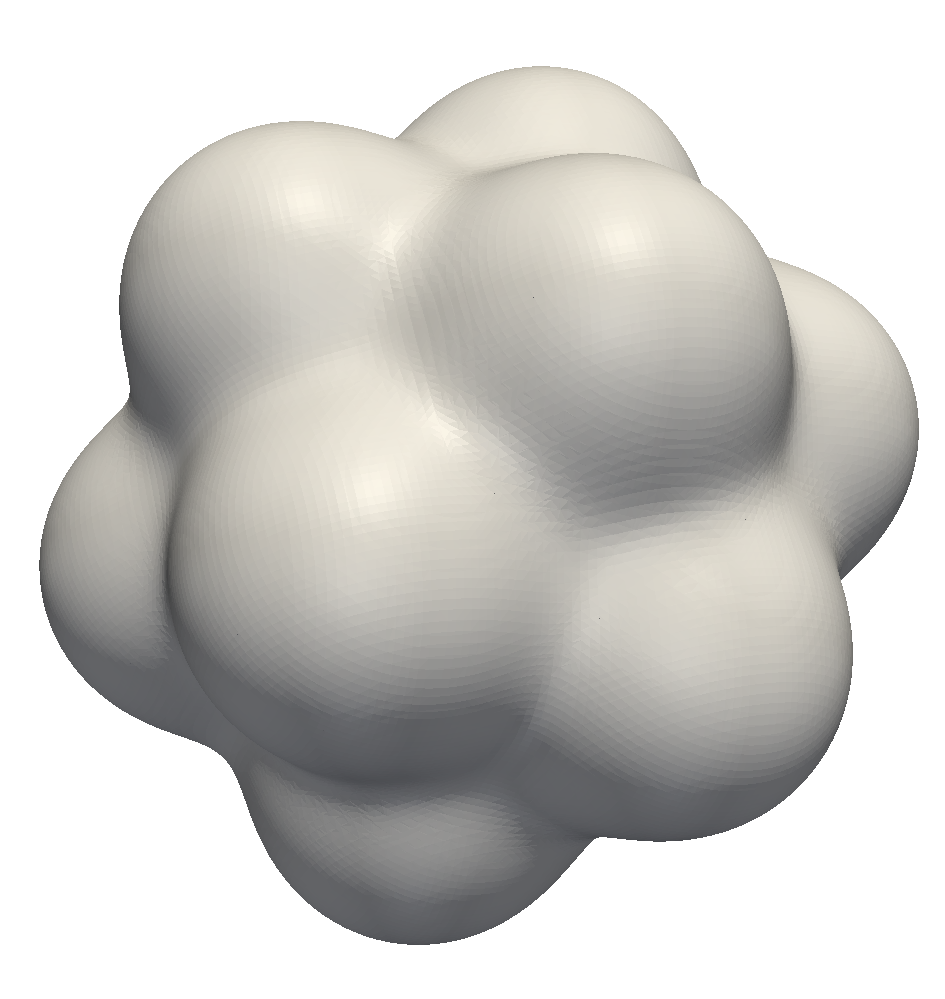}
\caption{Cut elements.}
\end{subfigure}
\begin{subfigure}{0.35\textwidth}
\centering
\includegraphics[width=\textwidth]{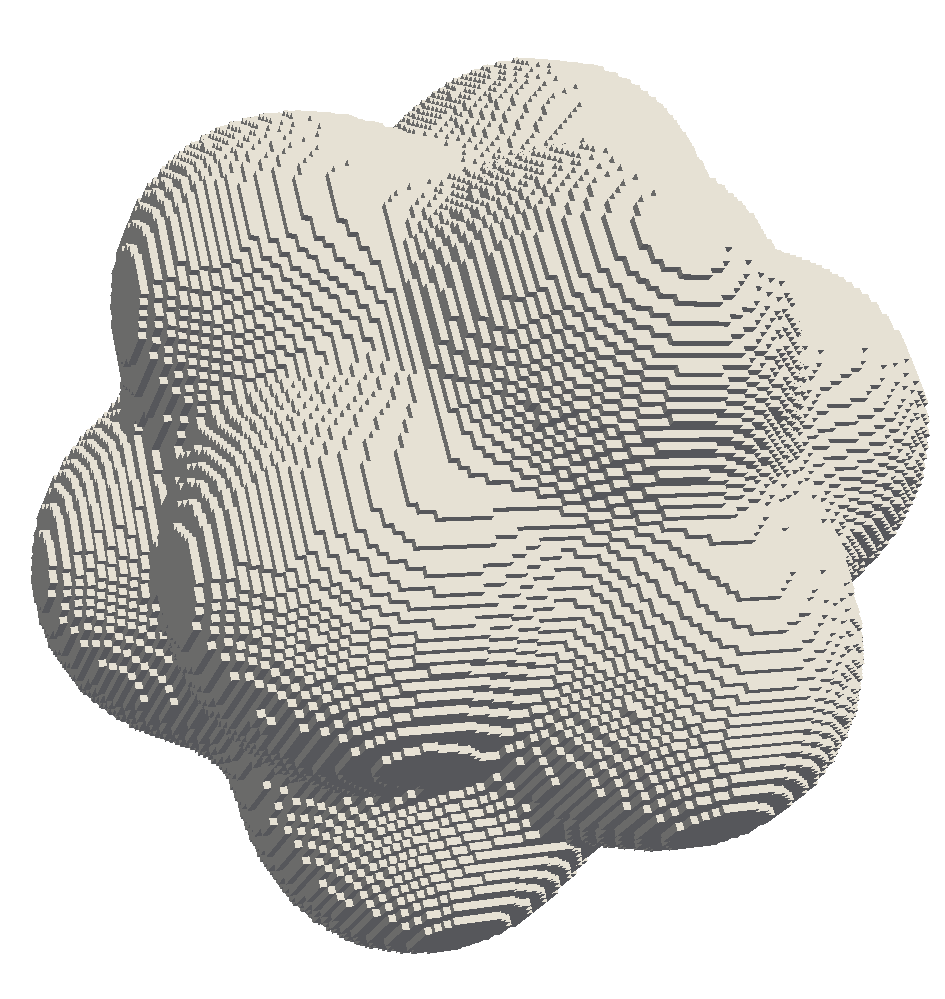}
\caption{Full elements.}
\end{subfigure}
\caption{The ``popcorn flake'' geometry reconstructed with cut elements and with full elements.}
\label{fig:popcorn_cut_vs_full}
\end{figure}

\newcommand{\examplename}{``Popcorn flake'' example}
\subsection{``Popcorn flake'' example.} For the sake of clarity, we first present the numerical results for a single geometry, namely the ``popcorn flake'' geometry in Fig.~\ref{fig:geometries:popcorn}. The results for the other shapes are discussed in Sect.~\ref{sec:multi-body} below.  Fig.~\ref{fig:popcorn_wscal_iters} shows the result of the weak scaling test for this geometry, where  Fig.~\ref{fig:popcorn_wscal_iters_nbc} is for the case of Neumann boundary conditions on $\Gamma_\mathrm{cut}$, and Fig.~\ref{fig:popcorn_wscal_iters_dbc} is for weak Dirichlet boundary conditions.  It is observed that the standard BDDC method leads to a perfect weak scaling for the case of full elements (as expected) for the two types of boundary conditions. %, i.e. the number of iterations tend to be constant as the mesh is refined. 
The performance of the standard BDDC method when dealing with cut elements (i.e., red lines in Fig.~\ref{fig:popcorn_wscal_iters}) is decent, in the sense that the number of iterations increases only mildly with the problem size, but it is clear that the absolute number of iterations needed to solve the problems is more than twice the iterations required for full elements (which is the target value). In contrast, both the 1st and 2nd variants of BDDC achieve a perfect weak scaling with iteration counts close to the ones associated with full elements for the case of Neumann boundary contitions (see blue and yellow lines in Fig.~\ref{fig:popcorn_wscal_iters_nbc}). 

\begin{figure}[ht!]
\centering
\begin{subfigure}{\textwidth}
\centering
\includegraphics[width=0.9\textwidth]{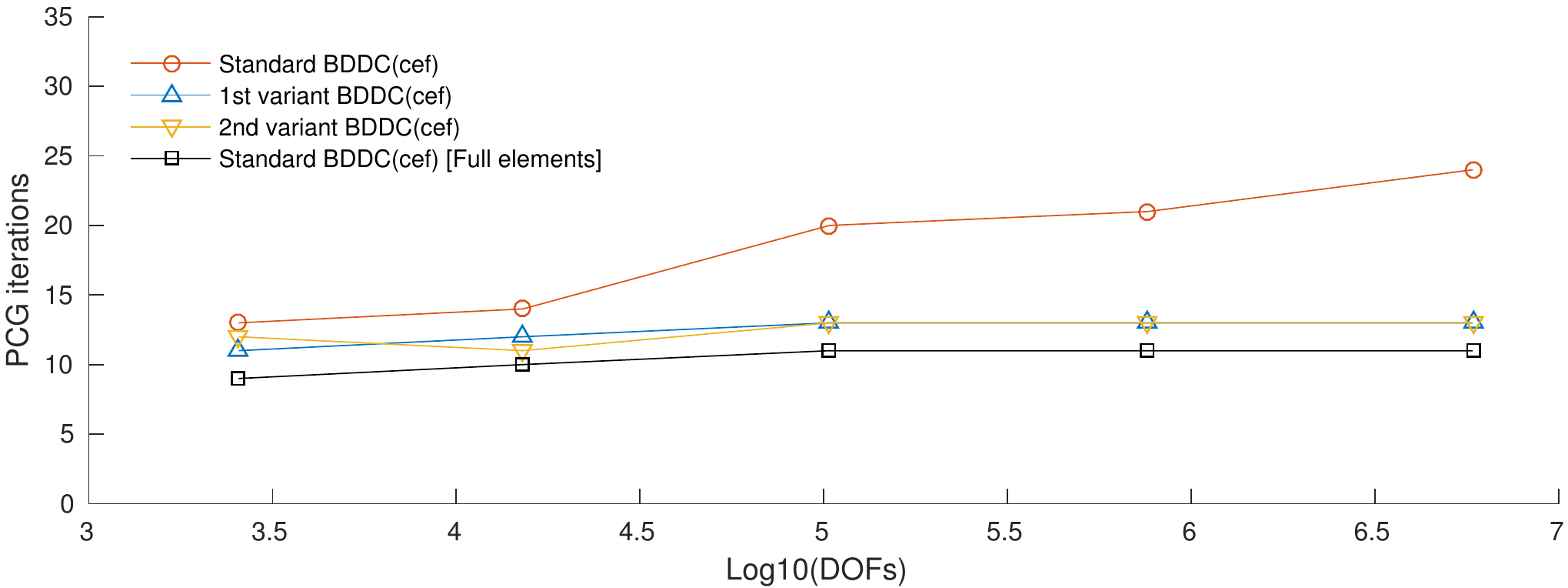}
\caption{Results for Neumann boundary conditions.}
\label{fig:popcorn_wscal_iters_nbc}
\end{subfigure}
\begin{subfigure}{\textwidth}
\centering
\includegraphics[width=0.9\textwidth]{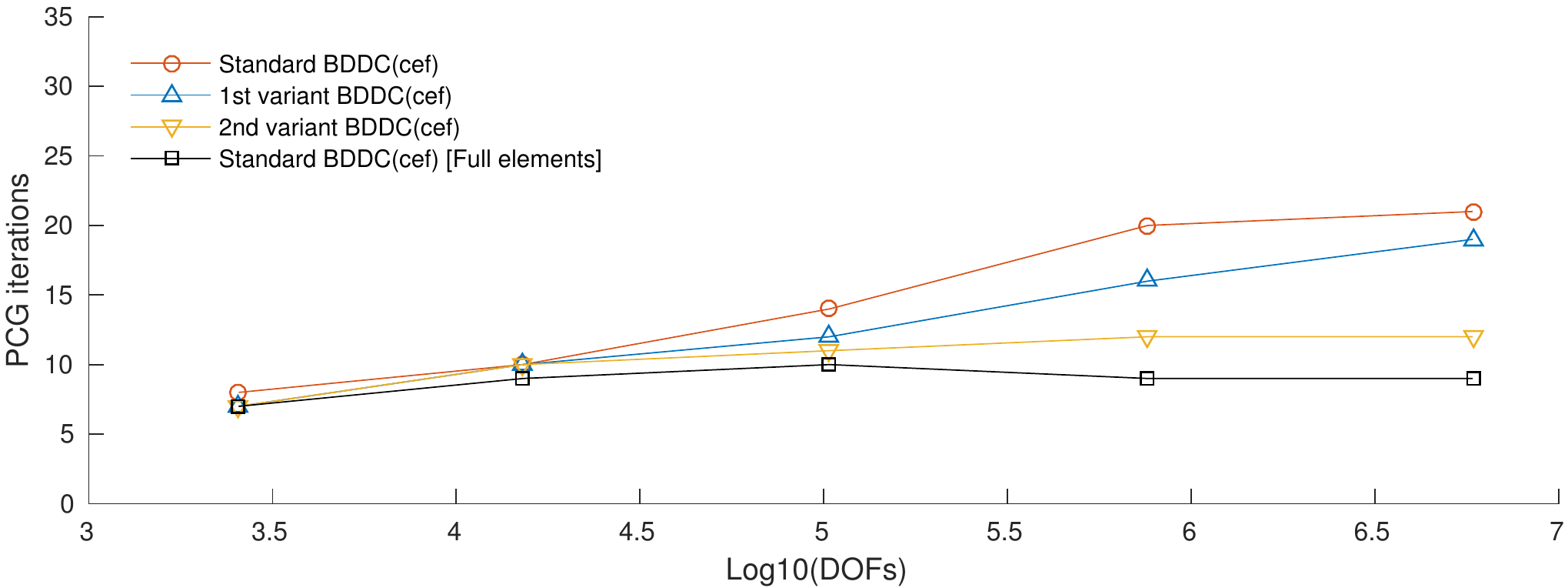}
\caption{Results for Dirichlet boundary conditions.}
\label{fig:popcorn_wscal_iters_dbc}
\end{subfigure}
\caption{\examplename: Weak scalability test.}
\label{fig:popcorn_wscal_iters}
\end{figure} 
However, for Dirichlet boundary conditions, only the 2nd variant provides a perfect weak scaling close to the one observed for full elements (see yellow line in Fig.~\ref{fig:popcorn_wscal_iters_dbc}).  %It is not surprising that the case of Dirichlet boundary conditions is more challenging. The coefficient $\beta$ introduced by Nitsche's method might be very different between neighbor elements (even different orders of magnitude). These strong changes in the coefficient makes the problem more difficult for BDDC, as it is known that conventional BDDC methods suffer in highly heterogeneous problems, [Ref PB-BDDC]. 
In conclusion, the 1st variant of the proposed method is enough to achieve a perfect weak scaling for Neumann boundary conditions, whereas  the 2nd variant is required for Dirichlet conditions imposed with Nitsche's method.  Similar results are obtained if the coarse space is defined with corners, edges, and faces (cef), or only with corners and edges (ce) (see  Fig.~\ref{fig:popcorn_wscal_iters_cef_vs_ce}). The main difference is that the iterations associated with corners and edges are slightly larger than for corners, edges, and faces, which is the expected behavior in BDDC methods. The iteration count is smaller in the later case since the associated coarse space is larger.
\begin{figure}[ht!]
\centering
\includegraphics[width=0.9\textwidth]{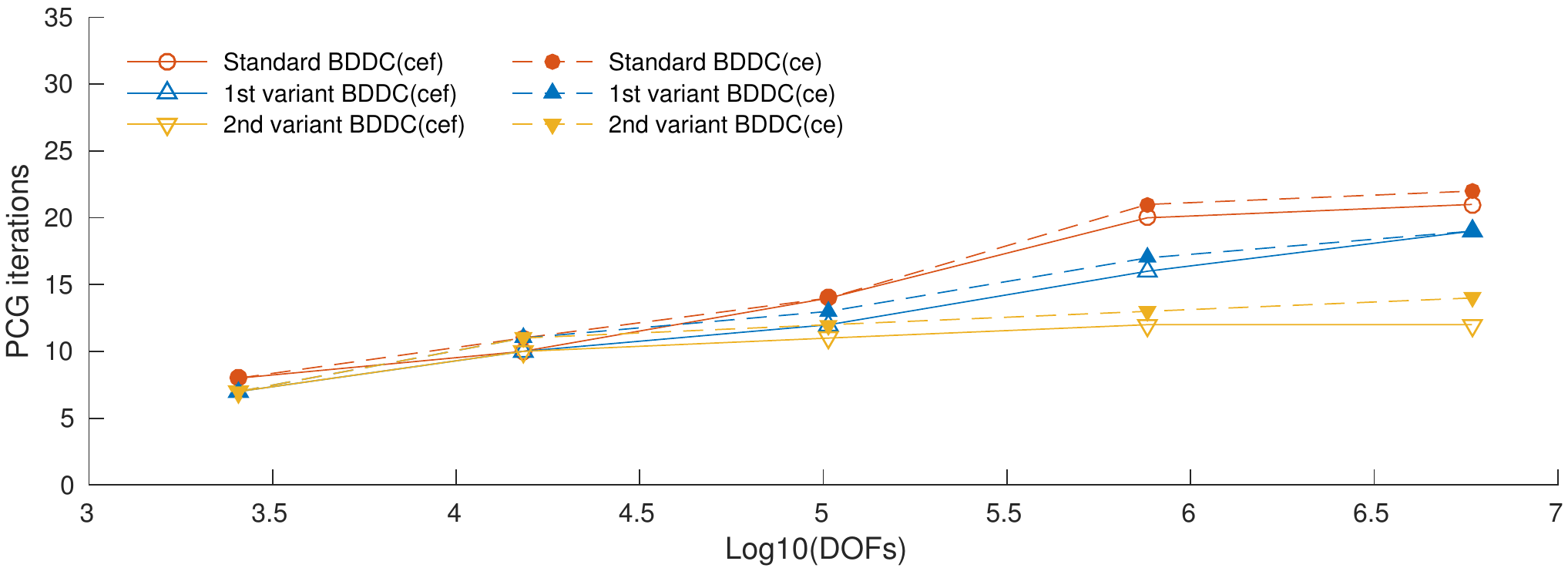}
\caption{\examplename: Weak scalability test. Comparison between BDDC(cef) and BDDC(ce).}
\label{fig:popcorn_wscal_iters_cef_vs_ce}
\end{figure}

Fig.~\ref{fig:popcorn_wscal_cdof_cef_vs_ce} shows the size of the coarse space of the new BDDC methods, namely variants 1 and 2, with respect to the size of the standard coarse space. %A value of 1.5 in Fig.~\ref{fig:popcorn_wscal_cdof_cef_vs_ce} means that the coarse space is increased 1.5 times due to splitting the edges, and a value of 1.0 would mean that the coarse space is not increased at all. 
Fortunately,   the size of the coarse spaces associated with variants 1 and 2 tends to be equal to the size of the standard coarse space in BDDC as the mesh is refined (see Fig.~\ref{fig:popcorn_wscal_cdof_cef_vs_ce}). This behavior is explained by the fact that extra corners are only added on edges that are close to cut elements. Since the number of BDDC objects in the interior of $\Omega$ scales much faster that the  objects cut by the boundary $\partial\Omega$,   the number of extra corners added near the boundary become nearly negligible as the partition is refined. As a result, the finer is the mesh the more competitive are the proposed BDDC preconditioners for unfitted meshes. Thanks to this behavior and for the perfect weak scaling results in \cite{badia_highly_2014,badia_multilevel_2016}, the proposed BDDC methods (variants 1 and 2) are suitable for large-scale problems requiring fine meshes and a large number of subdomains. For Neumann boundary conditions, variant 1 is the best choice since it leads to a smaller coarse space than variant 2 for approximately the same number of PCG iterations. For Dirichlet boundary conditions with Nitsche's method, the 2nd variant is the preferred choice specially for fine meshes, since it leads to a better weak scaling.
\begin{figure}[ht!]
\centering
\includegraphics[width=0.9\textwidth]{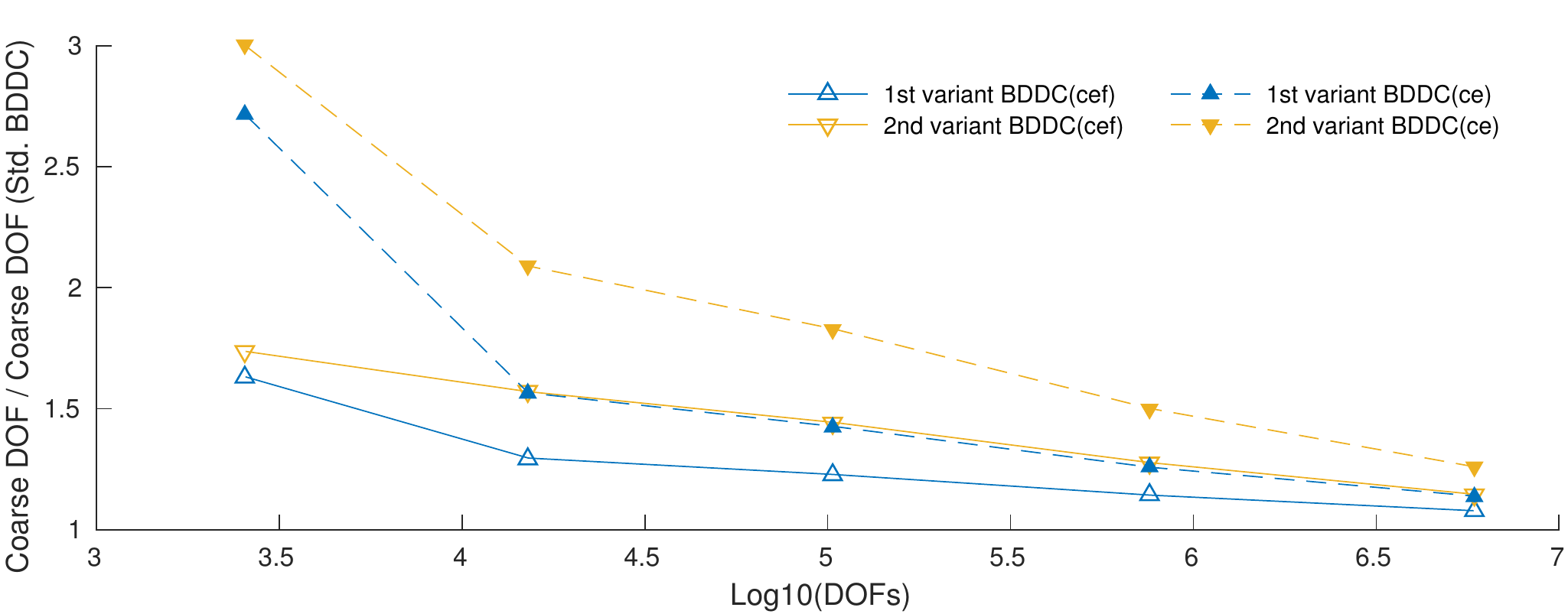}
\caption{\examplename: Size of the coarse space for the new BDDC methods (variants 1 and 2 in Table~\ref{table:methods}) with respect the size of the usual coarse space in BDDC.}
\label{fig:popcorn_wscal_cdof_cef_vs_ce}
\end{figure}

%
%\begin{figure}[ht!]
%\centering
%\begin{subfigure}{\textwidth}
%\centering
%\includegraphics[width=0.9\textwidth]{fig_popcorn_wscal_iters_nbc.pdf}
%\caption{Results for Neumann boundary conditions.}
%\end{subfigure}
%\begin{subfigure}{\textwidth}
%\centering
%\includegraphics[width=0.9\textwidth]{fig_popcorn_wscal_iters_dbc.pdf}
%\caption{Results for Dirichlet boundary conditions. }
%\end{subfigure}
%\caption{Weak scalability test for the ``Popcorn flake'' geometry displayed in  Fig.~\ref{fig:geometries}. {\color{red} TODO: compute full with Nitsche. now it is computed with strong DBC. Costa veure la linea groga al cas de Neumann}}
%\label{fig:popcorn_wscal_iters}
%\end{figure}

%
%\begin{figure}[ht!]
%\centering
%\begin{subfigure}{\textwidth}
%\centering
%\includegraphics[width=0.9\textwidth]{fig_popcorn_wscal_relcdofs_dbc.pdf}
%\caption{Results for Dirichlet boundary conditions.}
%\end{subfigure}
%\begin{subfigure}{\textwidth}
%\centering
%\includegraphics[width=0.9\textwidth]{fig_popcorn_wscal_relcdofs_nbc.pdf}
%\caption{Results for Neumann boundary conditions. {\color{red} Cal posar aquesta figura? no aporta info extra. Jo diria de paraula que només incloem el cas Dirichlet ja que el de Neumann és igual.}}
%\end{subfigure}
%\caption{ Size of the coarse space for the new BDDC methods (variants 1 and 2 in Table \TBD{} ) with respect the size of the usual coarse space in BDDC. The results are for the ``Popcorn'' geometry displayed in  Fig.~XX and are given for Dirichlet (Nitsche's method) and Neumann boundary conditions..}
%\end{figure}

{

  All the results presented so far are obtained considering a fixed ratio $H/h = 8$ between the characteristic sub-domain size $H$ and the characteristic mesh size $h$. We evaluate the effect of $H/h$ on the number of iterations of the three variants of the BDDC preconditioner for the popcorn flake geometry in Figures~\ref{fig:popcorn_wscal_iters_nloads} and~\ref{fig:popcorn_wscal_rcsize_nloads}. As expected, the effect of $H/h$ (measure of the local problem size) on the number of iterations is very mild. An algorithmic strong scalability analysis, i.e., to fix the global problem and consider $P$, $8P$, and $64P$ processors, can be obtained by looking for a given value of DOFs at the number of iterations for the three cases under consideration. These figures have not been included for the sake of brevity.
}

\begin{figure}[ht!]
\centering
\begin{subfigure}{\textwidth}
\centering
\includegraphics[width=0.9\textwidth]{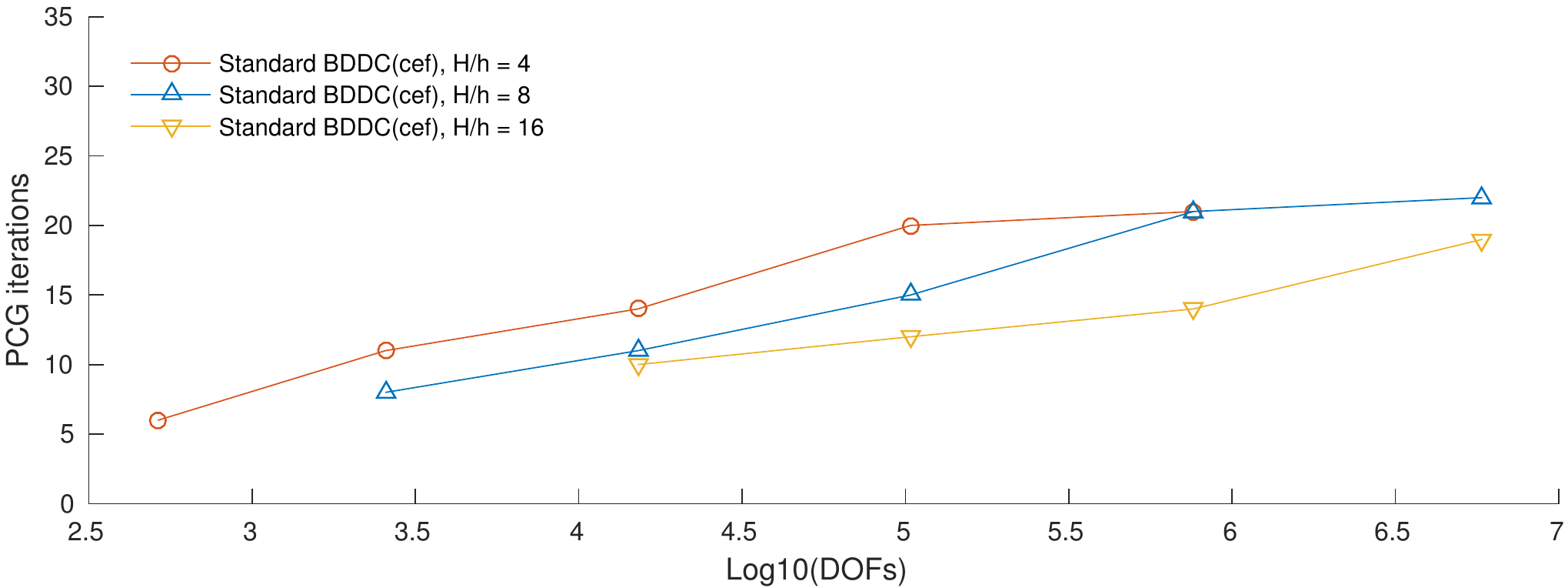}
\caption{ Results for standard coarse space.}
\label{fig:popcorn_wscal_iters_nloads:std}
\end{subfigure}
\begin{subfigure}{\textwidth}
\centering
\includegraphics[width=0.9\textwidth]{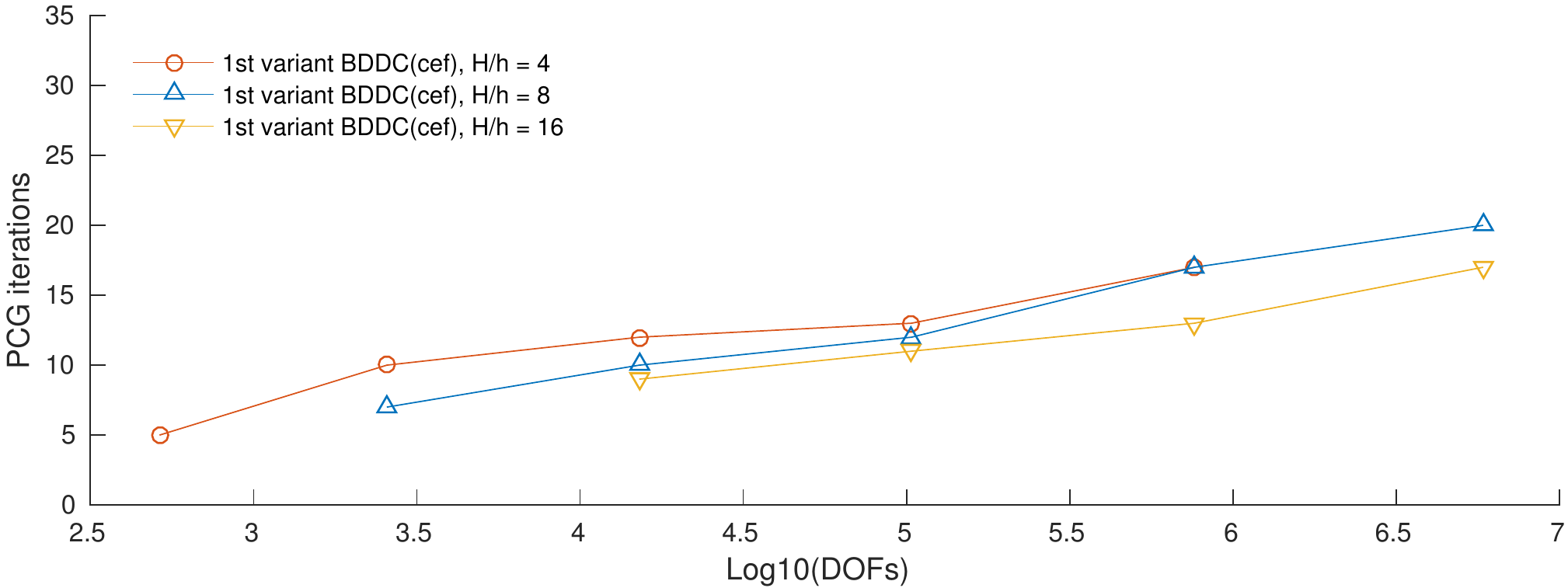}
\caption{ Results for 1st variant.}
\label{fig:popcorn_wscal_iters_nloads:v1}
\end{subfigure}
\begin{subfigure}{\textwidth}
\centering
\includegraphics[width=0.9\textwidth]{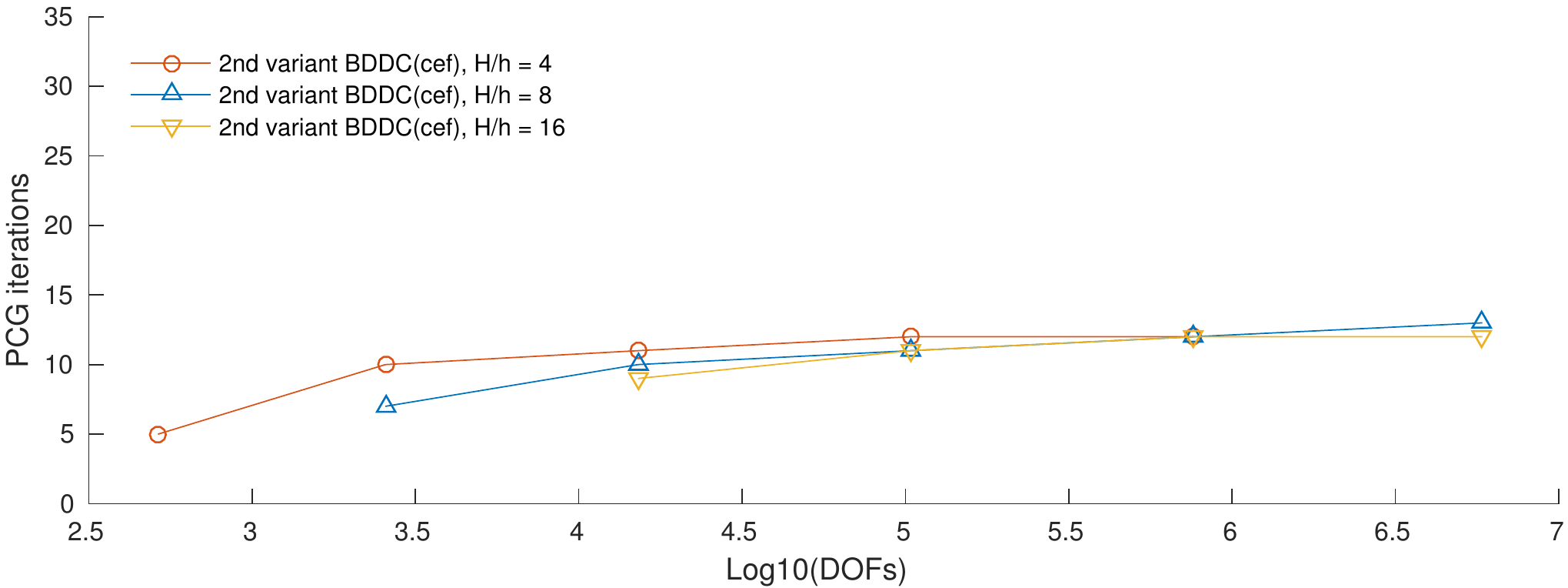}
\caption{ Results for 2nd variant.}
\label{fig:popcorn_wscal_iters_nloads:v2}
\end{subfigure}
\caption{ \examplename: Influence of the ratio $H/h$ in the linear solver weak scaling. Results for weak Dirichlet boundary conditions,  three different ratios, $H/h = 4$, $H/h = 8$ and $H/h = 16$, and for the BDDC methods ``Standard BDDC(cef)'', ``1st variant BDDC(cef)'', and ``2nd variant BDDC(cef)'' presented in Table~\ref{table:methods}.}
\label{fig:popcorn_wscal_iters_nloads}
\end{figure}

\begin{figure}[ht!]
\centering
\begin{subfigure}{\textwidth}
\centering
\includegraphics[width=0.9\textwidth]{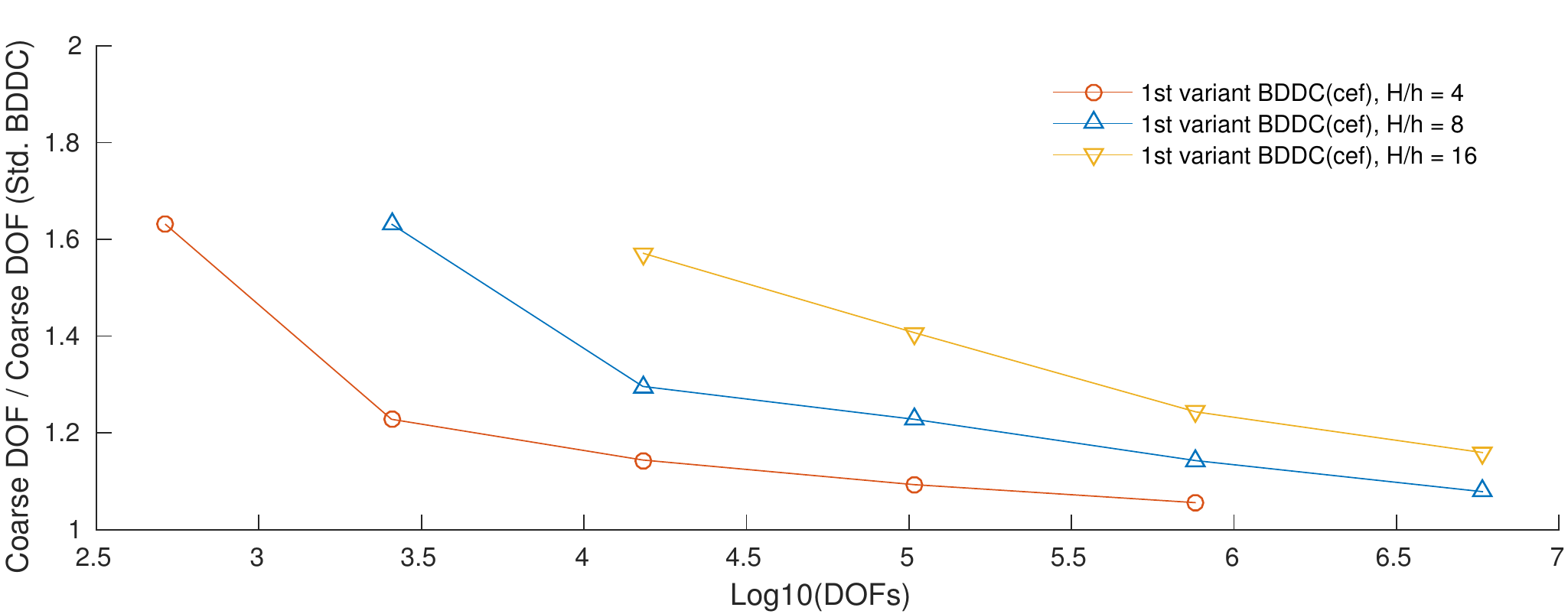}
\caption{ Results for 1st variant.}
\label{fig:popcorn_wscal_rcsize_nloads:v1}
\end{subfigure}
\begin{subfigure}{\textwidth}
\centering
\includegraphics[width=0.9\textwidth]{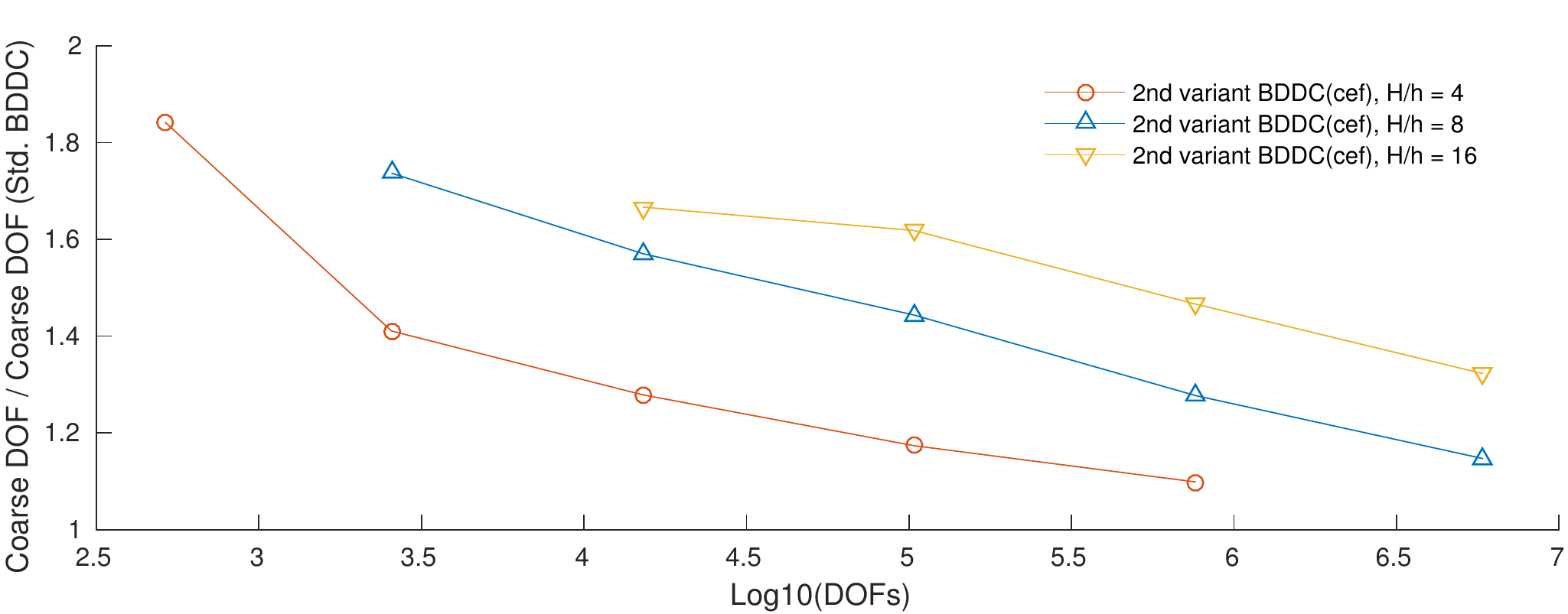}
\caption{ Results for 2nd variant.}
\label{fig:popcorn_wscal_rcsize_nloads:v2}
\end{subfigure}
\caption{ \examplename: Influence of the ratio $H/h$ in the coarse space size of the new BDDC methods (variants 1 and 2 in Table~\ref{table:methods}). Results for three different rations, $H/h = 4$, $H/h = 8$, and $H/h = 16$, and weak Dirichlet boundary conditions.}
\label{fig:popcorn_wscal_rcsize_nloads}
\end{figure}

It is also desirable that the corner detection mechanisms in variants 1 and 2 do not have a negative impact in the load balancing of the local operations involved in the BDDC preconditioner, namely the number of local Neumann problems required to compute the coarse basis functions, which is the main algorithmic source of load imbalance (the number of local Dirichlet problems and constrained Neumann problems in the application step are identical for all subdomains). The effect of the proposed method on the load balancing is studied in Fig.~\ref{fig:popcorn_histograms_coarse}. Here, the extra extra corners added by 1st and 2nd variants of the new method can potentially affect the load balancing. {However, the load imbalance in terms of the coarse DOFs per subdomain is not getting worse as the mesh is refined (see Fig~\ref{fig:popcorn_histograms_coarse}).} The results in Fig.~\ref{fig:popcorn_histograms_coarse}  are for the 2nd variant of the BDDC(ce) method. Even better results are obtained for the 1st variant and the BDDC(cef) method. The plots are not included for the sake of brevity since they do not add extra information.

\begin{figure}[ht!]
\centering
\begin{subfigure}{0.32\textwidth}
\centering
\includegraphics[width=\textwidth]{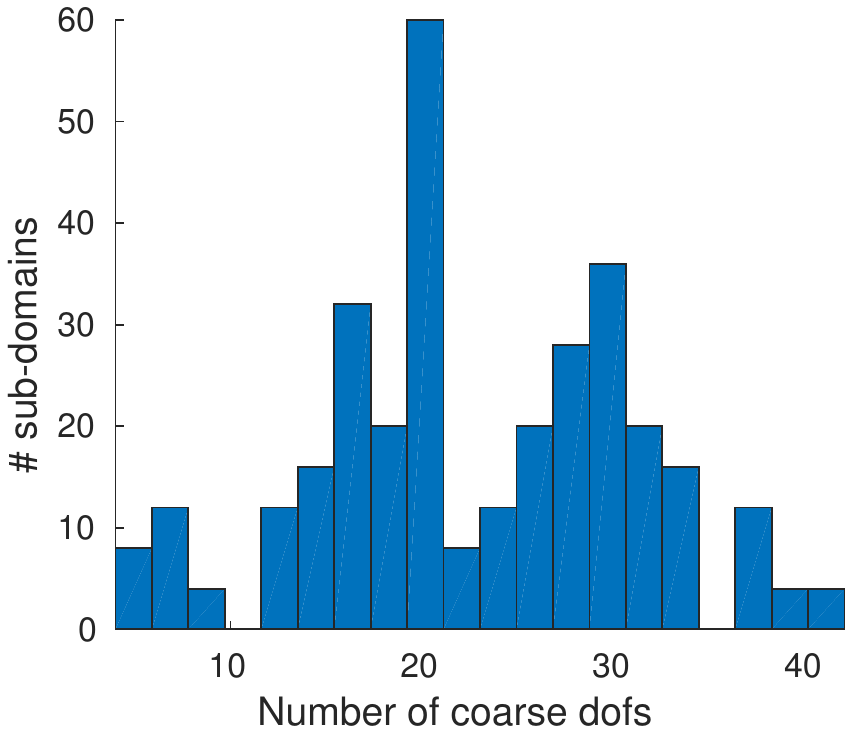}
\caption{Mesh 3.}
\end{subfigure}
\begin{subfigure}{0.32\textwidth}
\centering
\includegraphics[width=\textwidth]{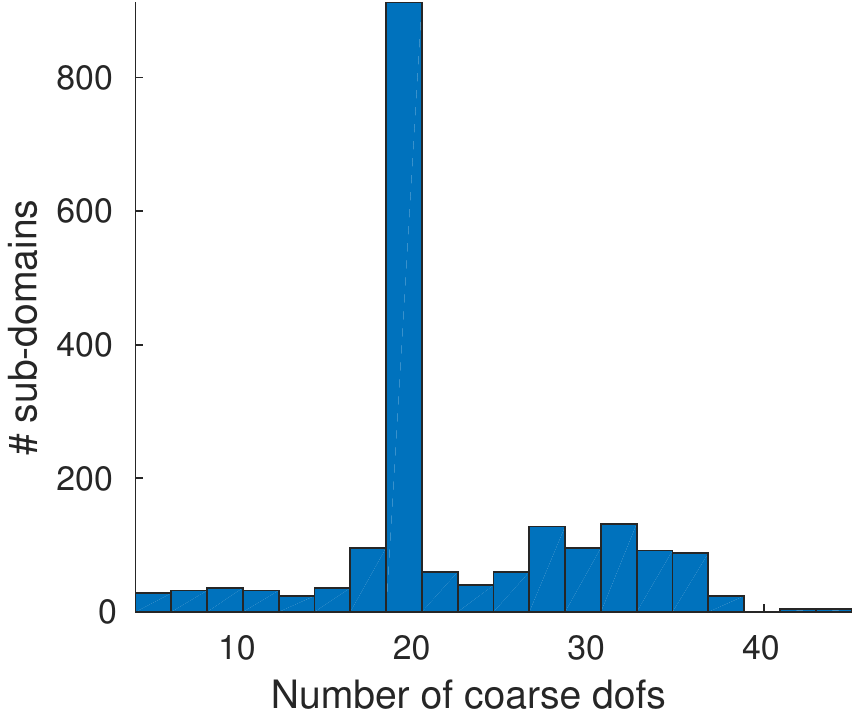}
\caption{Mesh 4.}
\end{subfigure}
\begin{subfigure}{0.32\textwidth}
\centering
\includegraphics[width=\textwidth]{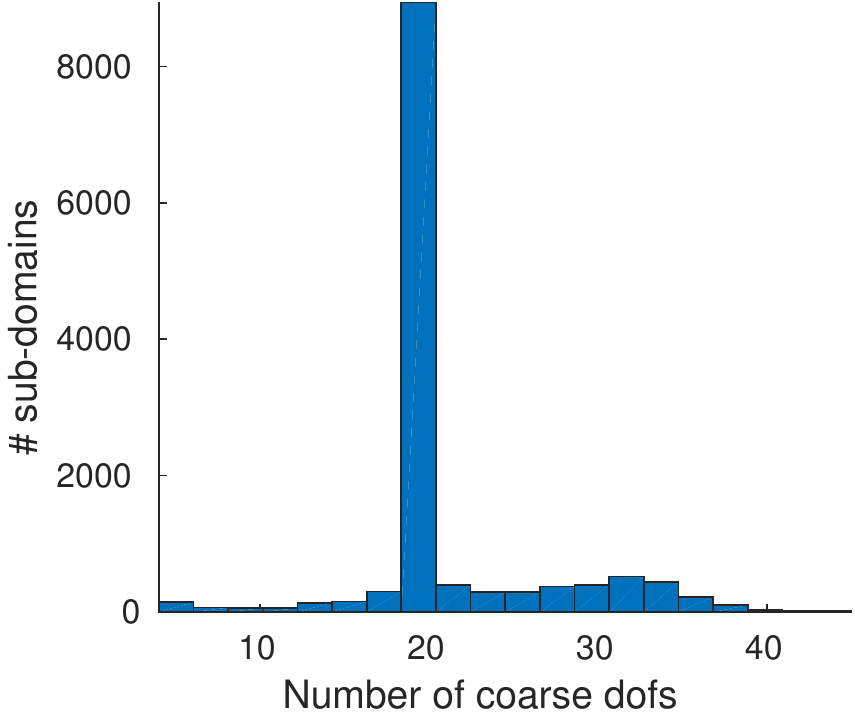}
\caption{Mesh 5.}
\end{subfigure}
\caption{\examplename: Distribution of the number of coarse basis functions across the subdomains.}
\label{fig:popcorn_histograms_coarse}
\end{figure}

%
%\begin{figure}[ht!]
%\centering
%\begin{subfigure}{0.32\textwidth}
%\centering
%\includegraphics[width=\textwidth]{fig_popcorn_histogram_diri.pdf}
%%\caption{}
%\end{subfigure}
%\begin{subfigure}{0.32\textwidth}
%\centering
%\includegraphics[width=\textwidth]{fig_popcorn_histogram_neum.pdf}
%%\caption{}
%\end{subfigure}
%\begin{subfigure}{0.32\textwidth}
%\centering
%\includegraphics[width=\textwidth]{fig_popcorn_histogram_coarse.pdf}
%%\caption{}
%\end{subfigure}
%\caption{\examplename: \color{red} Histograms for the popcorn, Dirichlet, bddc(ce), mesh 5. This is the most unfavorable case for the most relevant mesh. }
%\label{fig:popcorn_histograms}
%\end{figure}

In a final phase of the experiment, we study the robustness of the methods with respect to the position of the cut geometry. To this end, we slightly move the location of the background mesh by an arbitrary value $\varepsilon$ in one of the three spatial directions. By translating the mesh in this way, the location of the geometry changes and creates different kinds of intersections with the mesh elements, leading to cut elements with different active volume fractions that can affect the conditioning of the problem in different ways. Our goal is to show that the proposed methods are not affected by this translation and the different configurations of cut elements.  The values for $\varepsilon$ are chosen in the closed interval $[-h,+h]$, where $h$ is the element size of the background mesh. Fig.~\ref{fig:fig_popcorn_var_h} shows  the number of iterations of the PCG solver as a function of $\varepsilon$, for the mesh number 3 in Table~\ref{table:meshes}. The results are for weak Dirichlet boundary conditions, which is the most challenging case. The results in Fig.~\ref{fig:fig_popcorn_var_h} show that the 2nd variant is very robust with respect to the position of the interface. The maximum change in the number of iterations is only 1 iteration for all the 51 different values of $\varepsilon$ considered. The other methods have also a decent robustness with a maximum change of 4 iterations.   
%
%\begin{figure}[ht!]
%\centering
%\includegraphics[width=0.35\textwidth]{fig_popcorn_epsi.pdf}
%\caption{\examplename: Translation of the background mesh.}
%\label{fig:fig_popcorn_epsi}
%\end{figure}

\begin{figure}[ht!]
\centering
\includegraphics[width=0.9\textwidth]{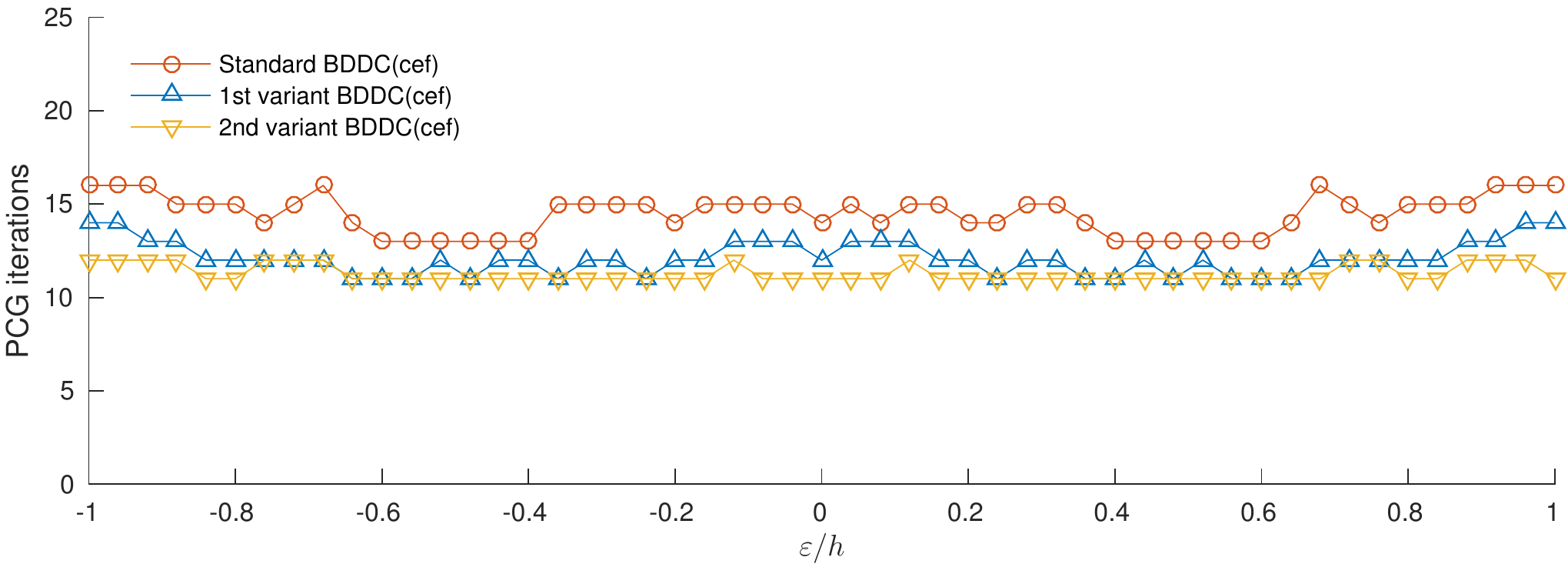}
\caption{\examplename: Study of the influence of the position of the boundary on the porposed methods.}
\label{fig:fig_popcorn_var_h}
\end{figure}

\renewcommand{\examplename}{Multi-body example}
\subsection{Multi-body example: sphere, popcorn flake, block, array of blocks and spiral.}
\label{sec:multi-body}

The purpose of this final example is to show that  the proposed BDDC preconditioners are able to deal with different complex geometries, and that the results obtained in previous example are qualitatively reproduced also in other settings. To this end, we study the BDDC preconditioners  for all the geometries previously presented in Fig.~\ref{fig:geometries}. The results of the weak scalability test are presented in Fig.~\ref{fig:fig_galery_wscal_iters}. The figure shows that performance of the methods for this set of geometries is similar to the one previously obtained for the ``popcorn flake''. For Neumann boundary conditions a perfect weak scaling is obtained for both the 1st and 2nd variants, whereas for Dirichlet boundary conditions a perfect weak scaling is only obtained with the 2nd variant. It is remarkable that the number of iterations is almost identical for all the geometries for the finest meshes (see 1st and 2nd variants for Neumann conditions, and 2nd variant for Dirichlet conditions). This is true even for the  array of blocks,  which is  the most complex geometry considered herein. 
\begin{figure}[ht!]
\centering
\begin{subfigure}{\textwidth}
\centering
\includegraphics[width=0.9\textwidth]{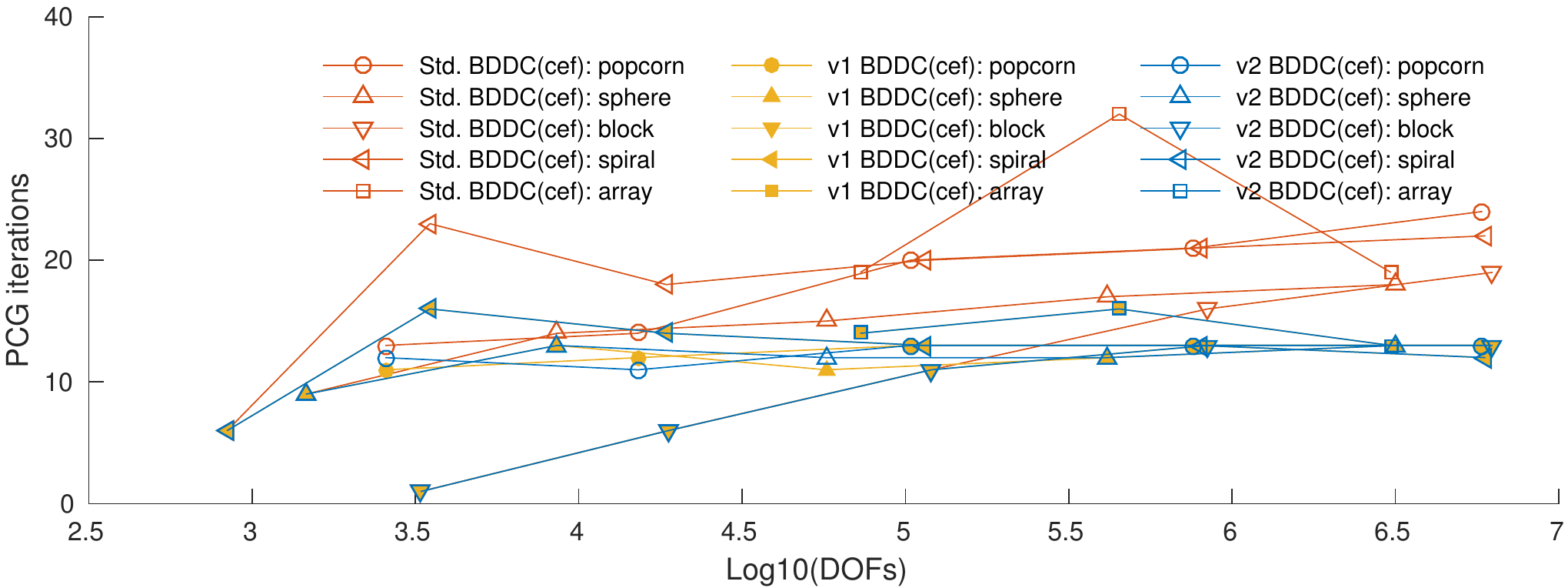}
\caption{Results for Neumann boundary conditions.}
\end{subfigure}
\begin{subfigure}{\textwidth}
\centering
\includegraphics[width=0.9\textwidth]{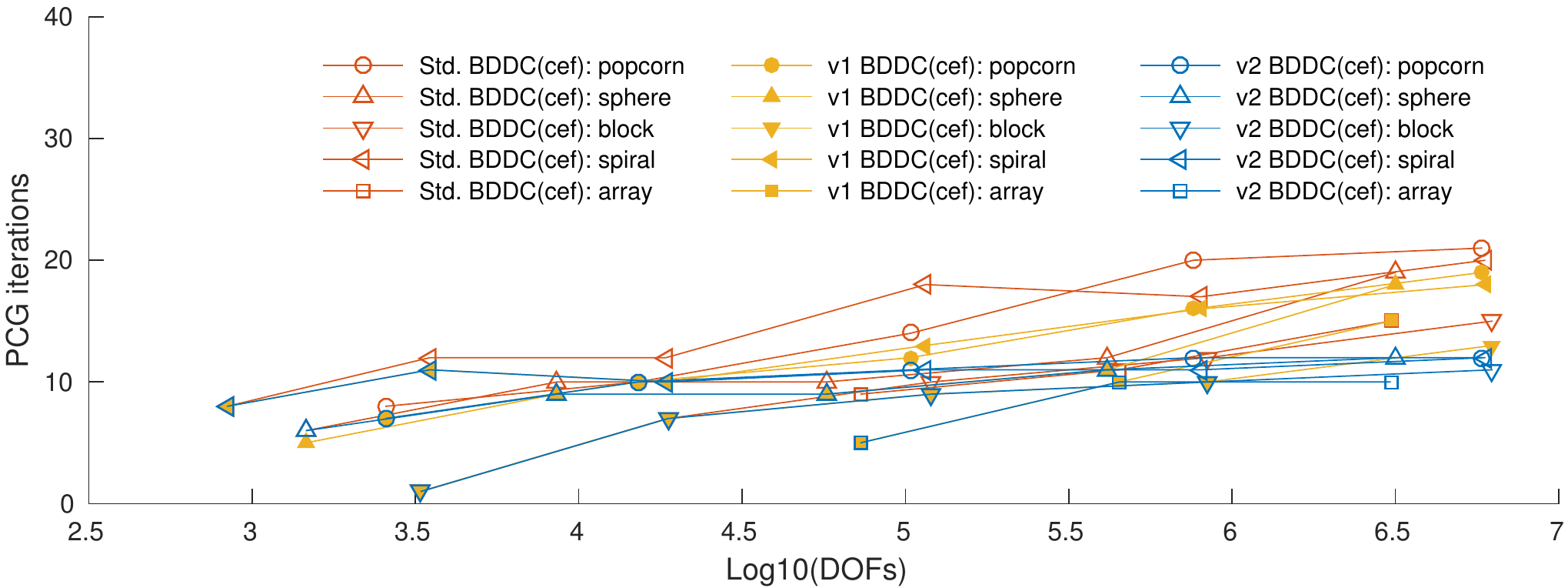}
\caption{Results for Dirichlet boundary conditions.}
\end{subfigure}
\caption{\examplename: Weak scalability test for all the different geometries displayed in Fig.~\ref{fig:geometries}.}
\label{fig:fig_galery_wscal_iters}
\end{figure}

%\begin{figure}[ht!]
%\centering
%\begin{subfigure}{\textwidth}
%\centering
%\includegraphics[width=0.9\textwidth]{fig_galery_wscal_iters_dbc_ce.pdf}
%\caption{Results Dirichlet boundary conditions.}
%\end{subfigure}
%\begin{subfigure}{\textwidth}
%\centering
%\includegraphics[width=0.9\textwidth]{fig_galery_wscal_iters_nbc_ce.pdf}
%\caption{Results for Neumann boundary conditions.}
%\end{subfigure}
%\caption{Weak scalability test for all the different geometries displayed in  Fig.~XX. The results are given for Dirichlet (Nitsche's method) and Neumann boundary conditions. The considered inter-domain continuity is of type BDDC(ce). {\color{red} TODO: encara falta calcular alguns casos. Si al calcular tots els casos, la figura queda molt igual a la de BDDC(cef), jo potser no la posaria. Diria de paraula que BDDC(ce) no està inclòs ja que els resultats son molt semblants a BDDC(cef).}}
%\end{figure}

The size of the coarse spaces behaves similar to the previous example (see Fig.~\ref{fig:galery_relcdofs}). The increment in the coarse space size associated with variants 1st and 2nd becomes smaller in front of the size of the coarse space in standard BDDC as the mesh is refined. This is again due to the fact that the number of objects in the bulk of the domain scales faster than the objects cut by the boundary. For that reason, the sphere is the shape that  presents a smaller increment in the coarse space size. It minimizes the cut surface per unit of domain volume. Note, however, that the coarse space size of the variants 1 and 2  tends also to the coarse space size of standard BDDC for the other shapes (see, e.g., the spiral or the array of blocks).
\begin{figure}[ht!]
\centering
\includegraphics[width=0.9\textwidth]{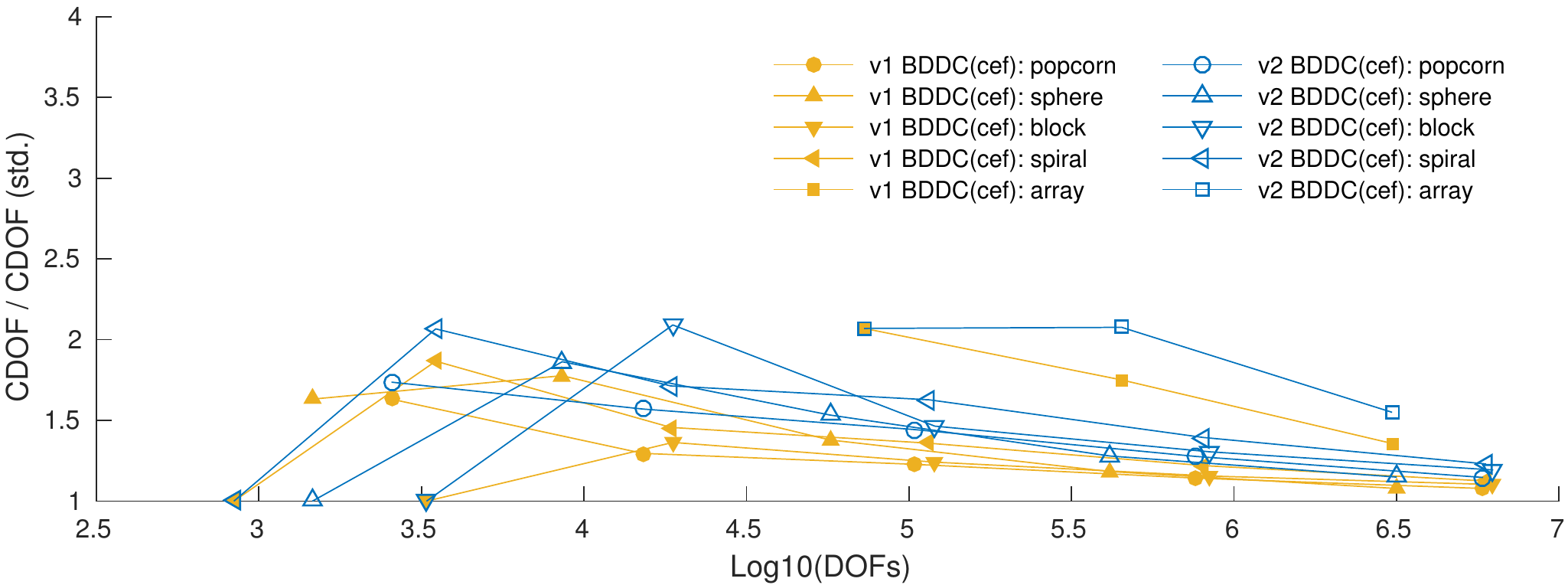}
\caption{\examplename: Size of the coarse space for the new BDDC methods (variants 1 and 2 in Table~\ref{table:methods}) with respect to the size of the usual coarse space in BDDC.}
\label{fig:galery_relcdofs}
\end{figure}

%\begin{figure}[ht!]
%\centering
%\begin{subfigure}{\textwidth}
%\centering
%\includegraphics[width=0.9\textwidth]{fig_galery_wscal_relcdofs_dbc_cef.pdf}
%\caption{Results for Dirichlet boundary conditions.}
%\end{subfigure}
%\begin{subfigure}{\textwidth}
%\centering
%\includegraphics[width=0.9\textwidth]{fig_galery_wscal_relcdofs_nbc_cef.pdf}
%\caption{Results for Neumann boundary conditions. {\color{red} Aprota algo?}}
%\end{subfigure}
%\caption{Size of the coarse space for the new BDDC methods (variants 1 and 2 in Table \TBD{} ) with respect the size of the usual coarse space in BDDC. The results are for all the different geometries displayed and for Dirichlet (Nitsche's method) and Neumann boundary conditions. The considered inter-domain continuity is of type BDDC(cef).}
%\end{figure}

%\begin{figure}[ht!]
%\centering
%\begin{subfigure}{\textwidth}
%\centering
%\includegraphics[width=0.9\textwidth]{fig_galery_wscal_relcdofs_dbc_ce.pdf}
%\caption{Results for Dirichlet boundary conditions.}
%\end{subfigure}
%\begin{subfigure}{\textwidth}
%\centering
%\includegraphics[width=0.9\textwidth]{fig_galery_wscal_relcdofs_nbc_ce.pdf}
%\caption{Results for Neumann boundary conditions. {\color{red} Aprota algo?}}
%\end{subfigure}
%\caption{Size of the coarse space for the new BDDC methods (variants 1 and 2 in Table \TBD{} ) with respect the size of the usual coarse space in BDDC. The results are for all the different geometries displayed and for Dirichlet (Nitsche's method) and Neumann boundary conditions. The considered inter-domain continuity is of type BDDC(ce). }
%\end{figure}

Finally, we study the robustness of the methods with respect to the position of the unfitted boundary $\partial\Omega$. In all cases, we move the background mesh a value $\varepsilon\in[-h,+h]$ to see  the influence of the boundary location. Fig.~\ref{fig:galery_var_h} shows the number of iterations of the PCG solver with respect to the value $\varepsilon$ for all the shapes. The results correspond to weak Dirichlet boundary conditions, which is the more challenging case. The results are similar to previous example. It is observed that the performance of the 2nd variant is very robust. It is almost independent of the position of the interface with a maximum change of 1 iteration in all the  considered geometries. 
\begin{figure}[ht!]
\centering
\includegraphics[width=0.9\textwidth]{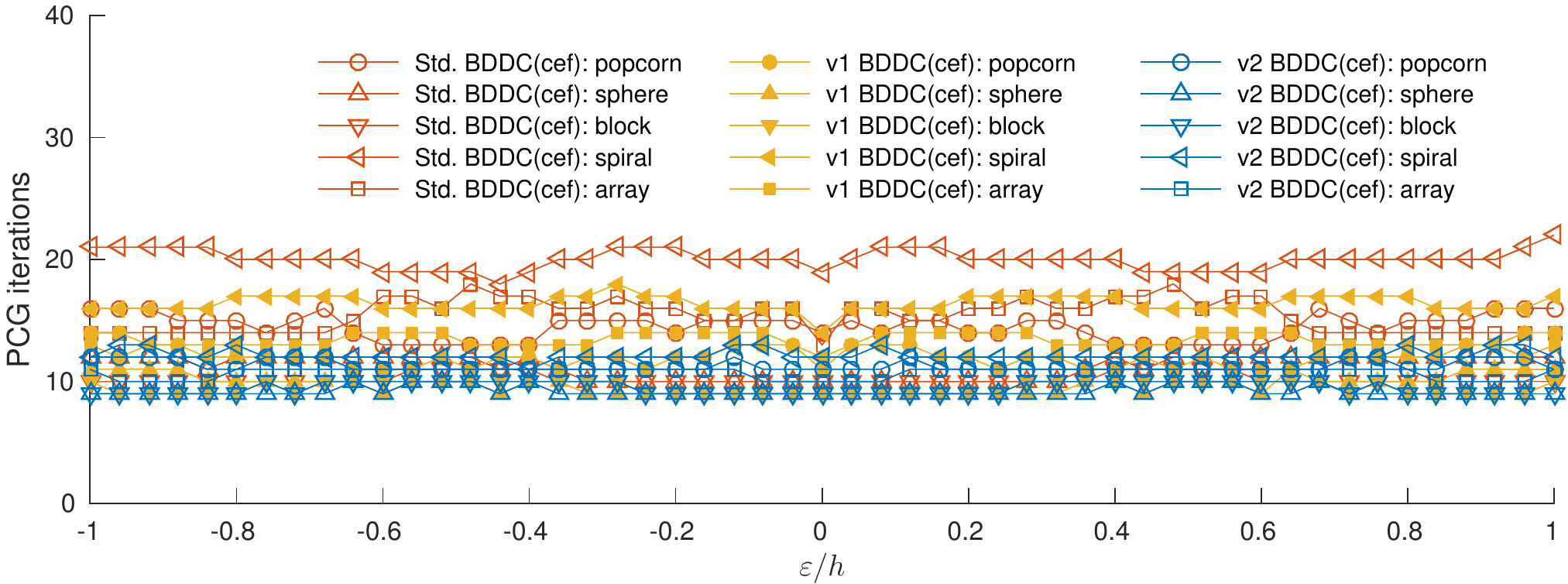}
\caption{\examplename: Study of the influence of the position of the boundary on the porposed methods.}
\label{fig:galery_var_h}
\end{figure}

In summary, the presented results show that the proposed variants of the BDDC method are able to provide excellent results for different and complex geometries discretized with unfitted FE meshes. As in previous example, the 1st variant is already enough to achieve a perfect weak scaling for all the shapes for the case of Neumann boundary conditions. The 2nd variant is the only one that leads to perfect weak scaling for Dirichlet conditions. It is remarkable that the number of PCG iterations is nearly independent from the geometry for the finest meshes. This shows that the methods are able to effectively get rid of the complexities introduced by the considered geometries and cut elements. The size of the coarse spaces of the new methods tends to  the size for standard BDDC as the mesh is refined.  Therefore, the efficiency of the new methods is comparable to the one of standard BDDC for fine meshes.

%{\color{red} TODO posar les figures de var h. Posar tb neumann}.

\section{Conclusions}\label{conclusions}

In this work, we have analyzed the main problems related to the use of BDDC methods to unfitted FE meshes. From a mathematical point of view, there are different assumptions in the analysis of these algorithms that do not hold anymore for unfitted meshes. As a result, the polylogarithmic condition number bounds in BDDC methods are not true in general. In fact, using a breakdown example, we have observed that the condition number of these methods cannot be robust with respect to the intersection between the surface and the background mesh. One can { fulfill again the assumptions of the BDDC analysis} by considering as corner constraint all the DOFs on the interface that belong to cut cells. Unfortunately, even thought the resulting algorithm is perfectly robust, such approach would lead to a dramatic increase in the coarse problem size, and it is not considered in practice.

Alternatively, and motivated by both numerical experimentation and the mathematical analysis of BDDC methods, we have considered a stiffness-based weighting and a subdivision of the BDDC edges. The first ingredient has turned to be an essential ingredient to get a robust BDDC solver for unfitted meshes. { On the other hand, the}  enrichment of the coarse BDDC space is especially important in regions with weak imposition of Dirichlet boundary conditions. As usual, the use of stiffness weighting is not backed up with a full mathematical theory. Instead, a complete set of numerical experiments has been performed to show the robustness and algorithmic weak scalability of the proposed solvers.

Future work will include the extension to more complex (non-coercive) physical problems and to ghost penalty stabilization. The use of stabilized versions is expected to be easier than the problem considered herein, since the condition number bound in these cases can be bounded independently of the location of the intersection. Furthermore, we want to implement these solvers within the \texttt{FEMPAR} library \cite{fempar-manual,fempar}, in order to show weak scalability results for its highly scalable implementation~\cite{badia_implementation_2013,badia_multilevel_2016} of multilevel BDDC preconditioners. We also want to apply these methods to adaptive octree meshes and space-filling curves \citep{bader_space-filling_2012}.

\bibliographystyle{siam} \bibliography{art021.bib}

\begin{thebibliography}{10}

\bibitem{fempar}
{\em \texttt{FEMPAR}: Finite element multiphysics parallel solvers}.
\newblock \url{https://gitlab.com/fempar/fempar}.

\bibitem{bader_space-filling_2012}
{\sc M.~Bader}, {\em Space-{{Filling Curves}}: {{An Introduction With
  Applications}} in {{Scientific Computing}}}, {Springer Science \& Business
  Media}, Oct. 2012.
\newblock Google-Books-ID: eIe\_OdFP0WkC.

\bibitem{badia_implementation_2013}
{\sc S.~Badia, A.~F. Mart{\'\i}n, and J.~Principe}, {\em Implementation and
  {{Scalability Analysis}} of {{Balancing Domain Decomposition Methods}}},
  Archives of Computational Methods in Engineering, 20 (2013), pp.~239--262.

\bibitem{badia_multilevel_2016}
{\sc S.~Badia, A.~F. Mart{\'\i}n, and J.~Principe}, {\em Multilevel {{Balancing
  Domain Decomposition}} at {{Extreme Scales}}}, SIAM Journal on Scientific
  Computing,  (2016), pp.~C22--C52.

\bibitem{fempar-manual}
{\sc S.~Badia, A.~F. Mart\'in, and J.~Principe}, {\em \texttt{FEMPAR}: An
  object-oriented parallel finite element framework}, In preparation,  (2017).

\bibitem{badia_highly_2014}
{\sc S.~Badia, A.~F. Martín, and J.~Principe}, {\em A {Highly} {Scalable}
  {Parallel} {Implementation} of {Balancing} {Domain} {Decomposition} by
  {Constraints}}, SIAM Journal on Scientific Computing, 36 (2014),
  pp.~C190--C218.

\bibitem{badia_fluidstructure_2008}
{\sc S.~Badia, F.~Nobile, and C.~Vergara}, {\em Fluid–structure partitioned
  procedures based on {Robin} transmission conditions}, Journal of
  Computational Physics, 227 (2008), pp.~7027--7051.

\bibitem{becker_mesh_2002}
{\sc R.~Becker}, {\em Mesh adaptation for {{Dirichlet}} flow control via
  {{Nitsche}}'s method}, Communications in Numerical Methods in Engineering, 18
  (2002), pp.~669--680.

\bibitem{beirao_da_veiga_bddc_2012}
{\sc L.~Beirão Da~Veiga, D.~Cho, L.~F. Pavarino, and S.~Scacchi}, {\em Bddc
  preconditioners for isogeometric analysis}, Mathematical Models and Methods
  in Applied Sciences, 23 (2012), pp.~1099--1142.

\bibitem{belytschko_arbitrary_2001}
{\sc T.~Belytschko, N.~Moës, S.~Usui, and C.~Parimi}, {\em Arbitrary
  discontinuities in finite elements}, International Journal for Numerical
  Methods in Engineering, 50 (2001), pp.~993--1013.

\bibitem{berger-vergiat_inexact_2012}
{\sc L.~Berger-Vergiat, H.~Waisman, B.~Hiriyur, R.~Tuminaro, and D.~Keyes},
  {\em Inexact {Schwarz}-algebraic multigrid preconditioners for crack problems
  modeled by extended finite element methods}, International Journal for
  Numerical Methods in Engineering, 90 (2012), pp.~311--328.

\bibitem{brenner_mathematical_2010}
{\sc S.~C. Brenner and R.~Scott}, {\em The {{Mathematical Theory}} of {{Finite
  Element Methods}}}, {Springer}, softcover reprint of hardcover 3rd ed.
  2008~ed., Nov. 2010.

\bibitem{burman_cutfem:_2015}
{\sc E.~Burman, S.~Claus, P.~Hansbo, M.~G. Larson, and A.~Massing}, {\em
  {{CutFEM}}: {{Discretizing}} geometry and partial differential equations},
  International Journal for Numerical Methods in Engineering, 104 (2015),
  pp.~472--501.

\bibitem{calvo_adaptive_2016}
{\sc J.~G. Calvo and O.~B. Widlund}, {\em An adaptive choice of primal
  constraints for {BDDC} domain decomposition algorithms}, Electronic
  Transactions on Numerical Analysis, 45 (2016), pp.~524--544.

\bibitem{de_prenter_condition_2017}
{\sc F.~{de Prenter}, C.~V. Verhoosel, G.~J. {van Zwieten}, and E.~H. {van
  Brummelen}}, {\em Condition number analysis and preconditioning of the finite
  cell method}, Computer Methods in Applied Mechanics and Engineering, 316
  (2017), pp.~297--327.

\bibitem{dohrmann_preconditioner_2003}
{\sc C.~R. Dohrmann}, {\em A {Preconditioner} for {Substructuring} {Based} on
  {Constrained} {Energy} {Minimization}}, SIAM Journal on Scientific Computing,
  25 (2003), pp.~246--258.

\bibitem{gmeiner_quantitative_2016}
{\sc B.~Gmeiner, M.~Huber, L.~John, U.~Rüde, and B.~Wohlmuth}, {\em A
  quantitative performance study for {Stokes} solvers at the extreme scale},
  Journal of Computational Science, 17, Part 3 (2016), pp.~509--521.

\bibitem{hiriyur_quasi-algebraic_2012}
{\sc B.~Hiriyur, R.~Tuminaro, H.~Waisman, E.~Boman, and D.~Keyes}, {\em A
  {Quasi}-algebraic {Multigrid} {Approach} to {Fracture} {Problems} {Based} on
  {Extended} {Finite} {Elements}}, SIAM Journal on Scientific Computing, 34
  (2012), pp.~A603--A626.

\bibitem{kamensky_immersogeometric_2015}
{\sc D.~Kamensky, M.-C. Hsu, D.~Schillinger, J.~A. Evans, A.~Aggarwal,
  Y.~Bazilevs, M.~S. Sacks, and T.~J.~R. Hughes}, {\em An immersogeometric
  variational framework for fluid–structure interaction: {Application} to
  bioprosthetic heart valves}, Computer Methods in Applied Mechanics and
  Engineering, 284 (2015), pp.~1005--1053.

\bibitem{karypis_metis_2011}
{\sc G.~Karypis}, {\em {{METIS}} and {{ParMETIS}}}, in Encyclopedia of
  {{Parallel Computing}}, D.~Padua, ed., {Springer US}, 2011, pp.~1117--1124.

\bibitem{kim_bddc_2016}
{\sc H.~H. Kim, E.~Chung, and J.~Wang}, {\em {BDDC} and {FETI}-{DP} algorithms
  with adaptive coarse spaces for three-dimensional elliptic problems with
  oscillatory and high contrast coefficients}, arXiv:1606.07560 [math],
  (2016).
\newblock arXiv: 1606.07560.

\bibitem{klawonn_toward_2015}
{\sc A.~Klawonn, M.~Lanser, and O.~Rheinbach}, {\em Toward {Extremely}
  {Scalable} {Nonlinear} {Domain} {Decomposition} {Methods} for {Elliptic}
  {Partial} {Differential} {Equations}}, SIAM Journal on Scientific Computing,
  37 (2015), pp.~C667--C696.

\bibitem{klawonn_feti-dp_2015}
{\sc A.~Klawonn, P.~Radtke, and O.~Rheinbach}, {\em {FETI}-{DP} {Methods} with
  an {Adaptive} {Coarse} {Space}}, SIAM Journal on Numerical Analysis, 53
  (2015), pp.~297--320.

\bibitem{klawonn_dual-primal_2002}
{\sc A.~Klawonn, O.~B. Widlund, and M.~Dryja}, {\em Dual-{Primal} {FETI}
  {Methods} for {Three}-{Dimensional} {Elliptic} {Problems} with
  {Heterogeneous} {Coefficients}}, SIAM Journal on Numerical Analysis, 40
  (2002), pp.~159--179.

\bibitem{mandel_convergence_2003}
{\sc J.~Mandel and C.~R. Dohrmann}, {\em Convergence of a balancing domain
  decomposition by constraints and energy minimization}, Numerical Linear
  Algebra with Applications, 10 (2003), pp.~639--659.

\bibitem{marco_exact_2015-1}
{\sc O.~Marco, R.~Sevilla, Y.~Zhang, J.~J. R{\'o}denas, and M.~Tur}, {\em Exact
  {{3D}} boundary representation in finite element analysis based on
  {{Cartesian}} grids independent of the geometry}, International Journal for
  Numerical Methods in Engineering, 103 (2015), pp.~445--468.

\bibitem{menk_robust_2011}
{\sc A.~Menk and S.~P.~A. Bordas}, {\em A robust preconditioning technique for
  the extended finite element method}, International Journal for Numerical
  Methods in Engineering, 85 (2011), pp.~1609--1632.

\bibitem{nitsche_uber_1971}
{\sc J.~Nitsche}, {\em {\"U}ber ein {{Variationsprinzip}} zur {{L{\"o}sung}}
  von {{Dirichlet}}-{{Problemen}} bei {{Verwendung}} von {{Teilr{\"a}umen}},
  die keinen {{Randbedingungen}} unterworfen sind}, Abhandlungen aus dem
  Mathematischen Seminar der Universit{\"a}t Hamburg, 36 (1971), pp.~9--15.

\bibitem{parvizian_finite_2007}
{\sc J.~Parvizian, A.~Düster, and E.~Rank}, {\em Finite cell method: h- and
  p-extension for embedded domain problems in solid mechanics}, Computational
  Mechanics, 41 (2007), pp.~121--133.

\bibitem{pechstein2016unified}
{\sc C.~Pechstein and C.~R. Dohrmann}, {\em A unified framework for adaptive
  {BDDC}}, Tech. Rep. RICAM-Report 2016-20, Johann Radon Institute for
  Computational and Applied Mathematics (RICAM), Austrian Academy of Sciences,
  Altenberger Str. 69, 4040 Linz, Austria, 2016.

\bibitem{rudi_extreme-scale_2015}
{\sc J.~Rudi, A.~C.~I. Malossi, T.~Isaac, G.~Stadler, M.~Gurnis, P.~W.~J.
  Staar, Y.~Ineichen, C.~Bekas, A.~Curioni, and O.~Ghattas}, {\em An
  {Extreme}-scale {Implicit} {Solver} for {Complex} {PDEs}: {Highly}
  {Heterogeneous} {Flow} in {Earth}'s {Mantle}}, in Proceedings of the
  {International} {Conference} for {High} {Performance} {Computing},
  {Networking}, {Storage} and {Analysis}, {SC} '15, New York, NY, USA, 2015,
  ACM, pp.~5:1--5:12.

\bibitem{saad_iterative_2003}
{\sc Y.~Saad}, {\em Iterative {{Methods}} for {{Sparse Linear Systems}}}, Other
  Titles in Applied Mathematics, {Society for Industrial and Applied
  Mathematics}, Jan. 2003.

\bibitem{schillinger_finite_2014}
{\sc D.~Schillinger and M.~Ruess}, {\em The {{Finite Cell Method}}: {{A
  Review}} in the {{Context}} of {{Higher}}-{{Order Structural Analysis}} of
  {{CAD}} and {{Image}}-{{Based Geometric Models}}}, Archives of Computational
  Methods in Engineering, 22 (2014), pp.~391--455.

\bibitem{sousedik_adaptive-multilevel_2013-1}
{\sc B.~Soused{\'\i}k, J.~{\v S}{\'\i}stek, and J.~Mandel}, {\em
  Adaptive-{{Multilevel BDDC}} and its parallel implementation},
  arXiv:1301.0191,  (2013).

\bibitem{toselli_domain_2004}
{\sc A.~Toselli and O.~Widlund}, {\em Domain {{Decomposition Methods}}},
  {Springer}, 1~ed., Nov. 2004.

\bibitem{zampini_pcbddc:_2016}
{\sc S.~Zampini}, {\em {PCBDDC}: {A} {Class} of {Robust} {Dual}-{Primal}
  {Methods} in {PETSc}}, SIAM Journal on Scientific Computing, 38 (2016),
  pp.~S282--S306.

\bibitem{zampini_robustness_2016}
{\sc S.~Zampini and D.~E. Keyes}, {\em On the {Robustness} and {Prospects} of
  {Adaptive} {BDDC} {Methods} for {Finite} {Element} {Discretizations} of
  {Elliptic} {PDEs} with {High}-{Contrast} {Coefficients}}, in Proceedings of
  the {Platform} for {Advanced} {Scientific} {Computing} {Conference}, {PASC}
  '16, New York, NY, USA, 2016, ACM, pp.~6:1--6:13.

\end{thebibliography}

\end{document}